\newtheorem{theorem}{Theorem}
\newtheorem{lemma}[theorem]{Lemma}
\newtheorem{proposition}[theorem]{Proposition}
\newtheorem{observation}[theorem]{Observation}
\newtheorem{question}[theorem]{Question}
\newtheorem{fact}[theorem]{Fact}
\def\umin{{\uu_{\text{\rm min}}}}
\def\oB{\overline{B}}
\def\oR{\overline{R}}
\def\obb{\overline{b}}
\def\orr{\overline{r}}
\def\oD{\overline{D}}
\def\oS{\overline{S}}
\def\cy{{/\hbox{\hglue -0.12cm}/}}
\theoremstyle{plain}
\def\dd{{\mathcal D}}
\def\cc{{\mathcal C}}
\def\uu{{\mathcal U}}
\def\real{{\mathbb{R}}}
\def\ff{{\mathcal F}}
\def\ignore#1{{}}
\title{\bf On the number of unknot diagrams}
\author[1]{Carolina Medina}
\author[2,3]{Jorge L.~Ram\'{\i}rez-Alfons\'{\i}n}
\author[1,3]{Gelasio Salazar}
\affil[1]{Instituto de F\'\i sica, UASLP. San Luis Potos\'{\i}, Mexico, 78000.}
\affil[2]{Institut Montpelli\'erain Alexander Grothendieck, Universit\'e de Montpellier. Place Eug\`eene Bataillon, 34095 Montpellier, France.}
\affil[3]{Unit{\'e} Mixte Internationale CNRS-CONACYT-UNAM ``Laboratoire Solomon Lefschetz''. Cuernavaca, Mexico.}
\begin{document}

\maketitle

\begin{abstract}
Let $D$ be a knot diagram, and let $\dd$ denote the set of diagrams that can be obtained from $D$ by crossing exchanges. If $D$ has $n$ crossings, then $\dd$ consists of $2^n$ diagrams. A folklore argument shows that at least one of these $2^n$ diagrams is unknot, from which it follows that every diagram has finite unknotting number. It is easy to see that this argument can be used to show that actually $\dd$ has more than one unknot diagram, but it cannot yield more than  $4n$ unknot diagrams. We improve this linear bound to a superpolynomial bound, by showing that at least $2^{\sqrt[3]{n}}$ of the diagrams in $\dd$ are unknot. We also show that either all the diagrams in $\dd$ are unknot, or there is a diagram in $\dd$ that is a diagram of the trefoil knot. 
\end{abstract} 

\section{Introduction}\label{sec:intro}

We follow standard knot theory terminology as in~\cite{adams}. All knots under consideration are tame, that is, they have regular diagrams. All diagrams under consideration are regular. 

\subsection{Our main result}

A staple in every elementary knot theory course is that, given any diagram $D$, it is always possible to turn $D$ into a diagram of the unknot (that is, into an {\em unknot diagram}) by performing some crossing exchanges on $D$ (see Figure~\ref{fig:croexc}). The minimum number of such crossing exchanges that turn $D$ into an unknot diagram is a well-studied parameter, the {\em unknotting number} of $D$.

We review in Section~\ref{sec:background} the well-known argument that turns any diagram $D$ into an unknot diagram, by performing crossing exchanges. As we shall remark, this argument actually produces more than one unknot diagram from $D$, but it gives a relatively small number: if $D$ has $n$ crossings, this idea yields at most $4n$ distinct unknot diagrams that can be obtained from $D$ by crossing exchanges. Our main result is that actually there is always a superpolynomial number of such unknot diagrams.

\begin{theorem}\label{thm:maindia}
Let $D$ be a knot diagram with $n$ crossings. Then there are at least $2^{\sqrt[3]{n}}$ distinct unknot diagrams that can be obtained by performing crossing exchanges on $D$.
\end{theorem}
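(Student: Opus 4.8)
The plan is to work entirely with the Gauss code (equivalently, the signed chord diagram) of $D$, to locate inside it one large ``regular'' block of crossings which can be re-signed freely, and to show that each re-signing is absorbed by Reidemeister moves. The count $2^{\sqrt[3]{n}}$ will then come from the guaranteed size of such a block.

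\textbf{Reductions.} If $c$ is a nugatory crossing of $D$, then a crossing exchange at $c$ is a Reidemeister~I move, so the number of unknot diagrams in $\dd$ is exactly twice the number of unknot diagrams obtainable from the $(n-1)$-crossing diagram obtained by the Reidemeister~I move at $c$; thus any nugatory crossings only help, and we may assume $D$ is reduced. Likewise, if the chord diagram is disconnected (a connected sum), the count is multiplicative over the summands, so we may assume $D$ is prime.

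\textbf{A local lemma.} Call crossings $c_1,\dots,c_k$ a \emph{twist region} if, reading the Gauss code cyclically, their occurrences form two consecutive blocks $c_1c_2\cdots c_k\,\cdots\,c_1c_2\cdots c_k$ with no other crossing interleaved; equivalently they constitute a $2$-string braid box $\sigma_1^{\epsilon_1}\cdots\sigma_1^{\epsilon_k}$ disjoint from the rest of the diagram. A crossing exchange toggles some $\epsilon_i$, and since $\sigma_1^{\epsilon_1}\cdots\sigma_1^{\epsilon_k}=\sigma_1^{\epsilon_1+\cdots+\epsilon_k}$, the box is isotopic rel boundary to two parallel strands whenever $\sum_i\epsilon_i=0$; for each of the $\binom{k}{\lfloor k/2\rfloor}$ such states the diagram is isotopic to the $(n-k)$-crossing diagram $D^{*}$ obtained by deleting the box. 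Since $D^{*}$ itself has at least one unknot crossing-exchange diagram (the folklore fact that every knot diagram does), we already obtain $\binom{k}{\lfloor k/2\rfloor}\ge 2^{k}/(k+1)$ unknot diagrams in $\dd$. (One gets the even cleaner bound $2^{k}$ from a \emph{kink stack}, i.e.\ crossings whose code is the nested block $c_1\cdots c_k c_k\cdots c_1$, since all $2^{k}$ re-signings collapse by $k$ Reidemeister~I moves.)

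\textbf{Finding a large configuration, and the exponent $1/3$.} It remains to show that every reduced prime $n$-crossing diagram contains a twist region (or kink stack) of size $\Omega(\sqrt[3]{n})$. I would do a Dilworth/Erd\H{o}s--Szekeres analysis of the $n$ chords: first extract either many pairwise-crossing or many pairwise-nested chords, then ``clean'' that family into a genuinely consecutive block by discarding the crossings interleaved with it. Tracking the two losses --- the Erd\H{o}s--Szekeres loss and the cleanup loss --- and organizing them as a recursion of the shape $g(n)\ge 1+g\bigl(n-\Theta(n^{2/3})\bigr)$ (so $g$ grows by one each time $n$ drops by $\Theta(n^{2/3})$, whence $g(n)=\Theta(n^{1/3})$) produces a clean block of size $k=\Omega(\sqrt[3]{n})$; feeding this $k$ into the local lemma gives the theorem.

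\textbf{Main obstacle.} The crux is exactly this last step. An off-the-shelf Erd\H{o}s--Szekeres bound only yields pairwise-crossing (or pairwise-nested) chords, which need not form a consecutive block, and turning such a family into an honest twist region or kink stack without destroying too much of it --- i.e.\ keeping the cleanup cost down to $O(n^{2/3})$ rather than something larger --- is the delicate part. Everything else (the reductions, the braid-triviality count, the Reidemeister collapses, and the verification that the exponents add up) is routine once this configuration is in hand.
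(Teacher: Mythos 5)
Your reductions and your local lemma are essentially sound (modulo two repairable details: $\sum_i\epsilon_i=0$ is impossible for odd $k$, so you should work with $k-1$ of the crossings; and the loss of the factor $k+1$ against $2^k$ must be absorbed into the exponent). The fatal problem is the third step: the structure you need does not exist in general, so this is not a matter of optimizing a cleanup cost. A twist region of length $\ge 2$, in your Gauss-code sense, forces a pair of parallel edges in the underlying shadow, and a kink stack forces a loop edge; but there are arbitrarily large reduced prime knot shadows whose underlying $4$-regular plane graph is simple (no loops, no parallel edges) --- for instance the standard closed-braid shadows of the torus knots $T(3,q)$ --- and such diagrams contain no kink and no twist region of length greater than $1$ whatsoever. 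No Erd\H{o}s--Szekeres analysis can manufacture a block of size $\Omega(\sqrt[3]{n})$ that isn't there. Moreover, the proposed ``cleaning'' step is not a legitimate operation: a family of pairwise-crossing or pairwise-nested chords with other chords interleaved cannot be turned into a consecutive block by ``discarding'' the interleaved crossings, since deleting a crossing of a knot diagram is not among your moves --- you may only exchange it (which does not remove it) or smooth it (which changes the shadow, and unknot diagrams of the smoothed diagram do not lift back to unknot diagrams of $D$). There is also a secondary planarity issue: even a consecutive block in the Gauss code need not be a braid box ``disjoint from the rest of the diagram,'' because the disks bounded by the successive digons need not be empty, so the Reidemeister~II collapses you invoke are really macro moves whose validity consumes the very sign freedom you are trying to count.

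What your plan is missing is a re-signable structure that provably exists in \emph{every} shadow, and this is exactly what the paper supplies, in two forms. First, it peels off straight-ahead cycles one at a time: each peeled cycle can be made an overstrand or understrand of everything else (a factor of $2$ or $4$ per cycle), which is your kink-stack idea generalized from monogon faces to arbitrary straight-ahead cycles; this already proves the bound when the shadow admits a cycle decomposition of length at least $\sqrt[3]{n}$. When it does not, a counting argument produces, in some subshadow, two straight-ahead closed curves sharing at least $2\sqrt[3]{n}$ vertices, and the second technique exploits \emph{digons of this pair of closed curves} --- pairs of segments meeting only at their common endpoints, which always exist by a Jordan-curve/tree argument even though empty bigon faces need not --- gaining a factor of $2$ per digon via a Reidemeister~II (macro) move and recursing. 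Replacing your nonexistent twist region with these always-available digons is the essential idea you would need to make an argument of this shape work.
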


As far as we know, no non-trivial general lower bound along the lines of Theorem~\ref{thm:maindia} has been previously reported. There is, however, a substantial amount of work related to the complementary problem: out of the $2^n$ diagrams that can be obtained from $D$ by crossing exchanges, how many are knotted? This is related to the well-studied Frisch-Wasserman-Delbruck conjecture (see Section~\ref{sec:background}), which in its general form can be paraphrased as follows: if we take a random knot diagram with $n$ crossings, then the probability that it is knotted tends to $1$ as $n$ goes to infinity.

\begin{figure}[ht]
\centering
\scalebox{0.7}{\begin{tikzpicture}
\begin{scope}[line width=2pt, ]
\draw(-1.15,2.14).. controls (-0.8,3.23) and (0.8,3.3) .. (1.2,2);
\draw (0.12,0.075).. controls (1,0.5) and (1.4,1) .. (1.2,2);
\draw (0,-3).. controls (-2,-2.5) and (-1.48,-0.47) .. (-0.13,-0.06);
\draw(0,-3) .. controls (4,-4) and (3.9,1.56).. (1.35,1.98);
\draw (1.07,2.02) .. controls (0.7,2.1) and (-0.7,2.1).. (-1.2,2);
\draw(-0.15,-3.04) .. controls (-4,-3.8) and (-4,1.6).. (-1.2,2);
\draw (0.15,-2.95).. controls (1.9,-2.5) and (1.5,-0.5) .. (0,0);
\draw (0,0).. controls (-1,0.5) and (-1.4,1) .. (-1.22,1.85);
\end{scope}
\begin{scope} [white,shift={(0,0)}]
\filldraw (0.15,0.1) circle (3pt);
\filldraw (-0.15,-0.083) circle (3pt);
\filldraw (0.24,-2.89) circle (3pt);
\filldraw (-0.25,-3.08) circle (3pt);
\filldraw (1.35,1.98) circle (3pt);
\filldraw (1.03,2) circle (3pt);
\filldraw (-1.15,2.15) circle (3pt);
\filldraw (-1.2,1.85) circle (3pt);
\end{scope}
\begin{scope}[line width=2pt,shift={(10,0)} ]
\draw(-1.2,2).. controls (-0.8,3.23) and (0.8,3.3) .. (1.2,2);
\draw (0.12,0.075).. controls (1,0.5) and (1.4,1) .. (1.2,2);
\draw (0,-3).. controls (-2,-2.5) and (-1.48,-0.47) .. (-0.13,-0.06);
\draw(0,-3) .. controls (4,-4) and (3.9,1.56).. (1.35,1.98);
\draw (1.07,2.02) .. controls (0.7,2.1) and (-0.7,2.1).. (-1.07,2.02);
\draw(-0.15,-3.04) .. controls (-4,-3.8) and (-4,1.6).. (-1.25,2);
\draw (0.15,-2.95).. controls (1.9,-2.5) and (1.5,-0.5) .. (0,0);
\draw (0,0).. controls (-1,0.5) and (-1.4,1) .. (-1.2,2);
\end{scope}
\begin{scope} [white,shift={(10,0)}]
\filldraw (0.15,0.1) circle (3pt);
\filldraw (-0.15,-0.083) circle (3pt);
\filldraw (0.24,-2.89) circle (3pt);
\filldraw (-0.25,-3.08) circle (3pt);
\filldraw (1.35,1.98) circle (3pt);
\filldraw (1.03,2) circle (3pt);
\filldraw (-1.35,1.98) circle (3pt);
\filldraw (-1.03,2) circle (3pt);
\end{scope}


%
%
%
%
%
%
%
%

\end{tikzpicture}}
\caption{On the left hand side we have a diagram of the figure-eight knot. By performing a crossing exchange on the top left crossing, we obtain the unknot diagram on the right hand side.}
\label{fig:croexc}
\end{figure}
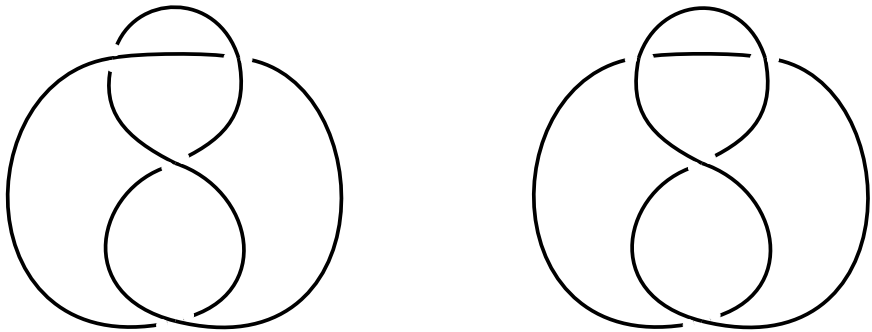

For our arguments, it will be convenient to formulate Theorem~\ref{thm:maindia} in terms of shadows. We recall that the {\em shadow} $S$ of a diagram $D$ is the $4$-regular plane graph obtained by transforming each crossing of $D$ into a vertex. 

Conversely, we may regard a diagram as a shadow for which we provide the over/under-crossing information at each vertex. That is, for each vertex it is prescribed which strand goes locally above (or, equivalently, is the {\em overpass}) and which strand goes locally below (or, equivalently, is the {\em underpass}). We call this information the {\em prescription} at the vertex. 

At each vertex we have two possible prescriptions (corresponding to which of the strands is an overpass at the corresponding crossing). The collection of prescriptions for all the vertices yields an {\em assignment} on $S$, resulting in a diagram $D$. We say that $S$ {\em is the shadow of $D$}, and that $D$ is {\em a diagram of $S$}. If $S$ has $n$ vertices, then there are $2^n$ possible assignments on $S$, and so there are $2^n$ distinct diagrams of $S$. These $2^n$ distinct diagrams are the diagrams that can be obtained by performing crossing exchanges on $D$.

In view of this discussion, it follows that Theorem~\ref{thm:maindia} can be alternatively written as follows.

\begin{theorem}[Equivalent to Theorem~\ref{thm:maindia}]\label{thm:maintheorem}
Let $S$ be a shadow with $n$ vertices. Then $S$ has at least $2^{\sqrt[3]{n}}$ distinct unknot diagrams.
\end{theorem}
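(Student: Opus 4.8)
The plan is to exploit the structure of a $4$-regular plane graph by finding a large collection of "independent" sub-structures, each of which can be independently prescribed so as to keep the diagram unknotted. First I would recall the folklore argument in its cleanest form: any shadow admits an unknot diagram via the "descending" (or "monotone") prescription along a fixed base point and orientation — walk along the curve and make each crossing an overpass the first time it is visited. The key observation is that this construction has slack: whenever a crossing is "switchable" in the sense that flipping it still yields a descending-type diagram (for instance, crossings that can be reordered without affecting the descending property), we gain a factor of $2$ in the count of unknot diagrams. So the goal reduces to showing that in any shadow with $n$ vertices we can find $\sqrt[3]{n}$ crossings (or, more precisely, $\sqrt[3]{n}$ independent binary choices) that can each be toggled freely while preserving unknottedness.

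The heart of the argument I would pursue is a Ramsey/pigeonhole-style dichotomy on the shadow $S$ viewed as a $4$-regular plane graph. The plan is to distinguish two regimes depending on the structure of $S$. In the first regime, $S$ contains a long "nested" or "parallel" family of bigons or of disjoint $2$-edge cuts — geometrically, a large collection of nested simple closed curves formed by pairs of strands — in which case each such nested feature contributes an independent crossing exchange that keeps the diagram unknot (a "twist region" can be unknotted regardless of the signs, as long as it sits inside an otherwise-unknot diagram, giving $2^{k}$ choices from $k$ nested features). In the second regime, $S$ is "spread out" — no large nested family exists — and then I would use a result on tangle decompositions or on the tree-like structure of $4$-regular plane graphs with small "nesting width" to locate $\sqrt[3]{n}$ vertex-disjoint subdiagrams, each a small shadow in its own right, such that the global diagram is unknot whenever each local subdiagram is given one of its (at least two) unknot prescriptions. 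The exponents $\sqrt[3]{n}$ strongly suggest the bound comes from a two-step extraction: first pull out $\sqrt{n}$-ish many disjoint pieces by a planar separator or direct combinatorial argument, then within the residual structure extract another $n^{1/6}$-ish worth, or alternatively a single extraction whose guaranteed size is governed by a quantity that is itself only $\sqrt[3]{n}$ in the worst case (e.g., a trade-off between the number of disjoint tangles and their individual complexity).

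Concretely, the key steps in order would be: (1) Fix a base point and a direction on the curve underlying $S$, and set up the descending-diagram construction, carefully identifying the combinatorial condition under which switching a given crossing yields another unknot diagram. (2) Prove a "multiplicativity" lemma: if $S$ can be cut along a simple closed curve meeting it in few points into pieces $S_1, S_2$, and each $S_i$ has $u_i$ unknot diagrams compatible with a fixed choice on the cut strands, then $S$ has at least $u_1 \cdot u_2$ unknot diagrams — so unknot-diagram counts multiply across tangle-sum decompositions. (3) Prove a structural lemma: every shadow with $n$ vertices either contains a nested family of size $\sqrt[3]{n}$ of twist-region-like features, or decomposes into $\sqrt[3]{n}$ tangle summands each with at least $2$ unknot diagrams — in either case yielding $2^{\sqrt[3]{n}}$ by steps (1)–(2). (4) Conclude. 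The main obstacle, I expect, will be step (3): controlling the global topology well enough to guarantee that the $\sqrt[3]{n}$ local unknot choices genuinely combine to a global unknot rather than producing some nontrivial connected sum or a knot hidden in the interaction between pieces. Making the "independence" rigorous — ruling out that switching one far-away crossing silently forces another — is where the planarity of the shadow and a careful choice of the descending base point must be used; I anticipate that the right framework is to track the diagram's Gauss code and show that the chosen switchable crossings correspond to disjoint "interleaving-free" sub-chords, so that unknottedness is preserved chord-block by chord-block.
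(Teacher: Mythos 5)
Your proposal is a program rather than a proof: everything hinges on step (3), the structural dichotomy, and you explicitly leave it unestablished. There is no known off-the-shelf result of the kind you invoke (``tree-like structure of $4$-regular plane graphs with small nesting width,'' a tangle decomposition into $\sqrt[3]{n}$ summands each admitting two unknot prescriptions), and producing such a statement is exactly where all the difficulty of the theorem lives. A second, more local problem is your claim that a twist region of $k$ crossings contributes $2^k$ free choices ``as long as it sits inside an otherwise-unknot diagram.'' That is false: giving all $k$ crossings the same sign in a clasp-plus-twist configuration yields twist knots, not the unknot. Only sign patterns in which the crossings cancel in adjacent pairs by Reidemeister II moves are automatically safe, so a feature with $k$ crossings is only guaranteed to contribute about $2^{k/2}$ unknot diagrams, not $2^k$. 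Your ``switchable crossings in a descending diagram'' starting point also needs care for the same reason; the paper notes that the descending construction by itself yields at most $4n$ unknot diagrams, so the slack you need cannot come from reordering descending diagrams alone.

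For comparison, the paper's dichotomy is combinatorially much more specific. It decomposes $S$ by repeatedly deleting \emph{straight-ahead cycles}: each deletion at least doubles the unknot count, because the deleted strand can be placed entirely above or entirely below the rest (a macro move of Type A collapses it). If such a cycle decomposition has size at least $\sqrt[3]{n}$, the bound follows. If every cycle decomposition has size $p\le\sqrt[3]{n}$, then since every vertex lies on exactly two of the $p$ cycles of the primary sequence, pigeonhole forces some pair of straight-ahead cycles (in a suitable subshadow) to share at least $2\sqrt[3]{n}$ vertices; two straight-ahead cycles meeting in $m$ points always contain a digon avoiding the roots (a fact about pairs of closed curves in the plane), and splitting digons one at a time yields $2^{m/2}$ unknot diagrams via Reidemeister II. This is the rigorous version of your ``nested bigons give independent choices'' intuition, with the correct exponent $m/2$, and it sidesteps your independence worries because each factor of $2$ is certified by an explicit equivalence to a smaller unknot diagram rather than by an appeal to non-interaction of far-apart crossings.
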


We use $\uu(S)$ to denote the set of unknot diagrams of $S$. Thus, with this notation, Theorem~\ref{thm:maintheorem} claims that every shadow $S$ satisfies $|\uu(S)|\ge 2^{\sqrt[3]{n}}$.

Obviously, the exact number of unknot diagrams depends on the shadow under consideration. It is easy to show that out of the $2^4$ diagrams obtained from the shadow of the diagrams in Figure~\ref{fig:croexc}, exactly $12$ are unknot. There are also extreme examples such as the ones illustrated in Figure~\ref{fig:chorizo}: all $2^4$ diagrams of each of these shadows are unknot. It is straightforward to generalize these examples, showing that for every positive integer $n$ there is a shadow on $n$ vertices, all of whose diagrams are unknot.

\begin{figure}[ht!]
\centering
\scalebox{0.7}{\begin{tikzpicture}[line width =1 pt]

\begin{scope}
\end{scope}

\begin{scope}[scale=0.7]
\draw(0,2) .. controls (-1.5,2) and (-1,-1) .. (0,-2);
\draw(0,-2) .. controls (-2,-2) and (-2,3) .. (0,4);
\draw (0,4) .. controls (-3,4) and (-3,-3) .. (0,-4);
\draw (0,-4) .. controls (-4,-4) and (-4,5) .. (0,6);
\draw(0,6) .. controls (-5,6) and (-5,-5) .. (0,-6);
\draw (0,-6) .. controls (-6,-6) and (-9,8) .. (0,8);
\draw (0,2) .. controls (1.5,2) and (1,-1) .. (0,-2);
\draw (0,-2) .. controls (2,-2) and (2,3) .. (0,4);
\draw (0,4) .. controls (3,4) and (3,-3) .. (0,-4);
\draw (0,-4) .. controls (4,-4) and (4,5) .. (0,6);
\draw (0,6) .. controls (5,6) and (5,-5.) .. (0,-6);
\draw (0,-6) .. controls (6,-6) and (9,8) .. (0,8);
\filldraw (0,-2) circle (7pt);
\filldraw (0,-4) circle (7pt);
\filldraw (0,-6) circle (7pt);
\filldraw (0,4) circle (7pt);
\filldraw (0,6) circle (7pt);

\end{scope}

\begin{scope}[shift ={(0,-7)}, scale=0.7]
\filldraw (2,0) circle (7pt);
\filldraw (-2,0) circle (7pt);
\filldraw (6,0) circle (7pt);
\filldraw (-6,0) circle (7pt);

\draw(6,0) .. controls (11,4) and (11,-4) .. (6,0);
\draw(-6,0) .. controls (-11,4) and (-11,-4) .. (-6,0);
\draw(-6,0) .. controls (-5,1.5) and (-3,1.5) .. (-2,0);
\draw(-6,0) .. controls (-5,-1.5) and (-3,-1.5) .. (-2,0);
\draw(-2,0) .. controls (-1,-1.5) and (1,-1.5) .. (2,0);
\draw(-2,0) .. controls (-1,1.5) and (1,1.5) .. (2,0);
\draw(2,0) .. controls (3,-1.5) and (5,-1.5) .. (6,0);
\draw(2,0) .. controls (3,1.5) and (5,1.5) .. (6,0);
\end{scope}

\draw (0,-0.8) node {\LARGE$v$};

\end{tikzpicture}}
\vglue -1 cm
\caption{All the diagrams of these shadows are unknot.}
\label{fig:chorizo}
\end{figure}
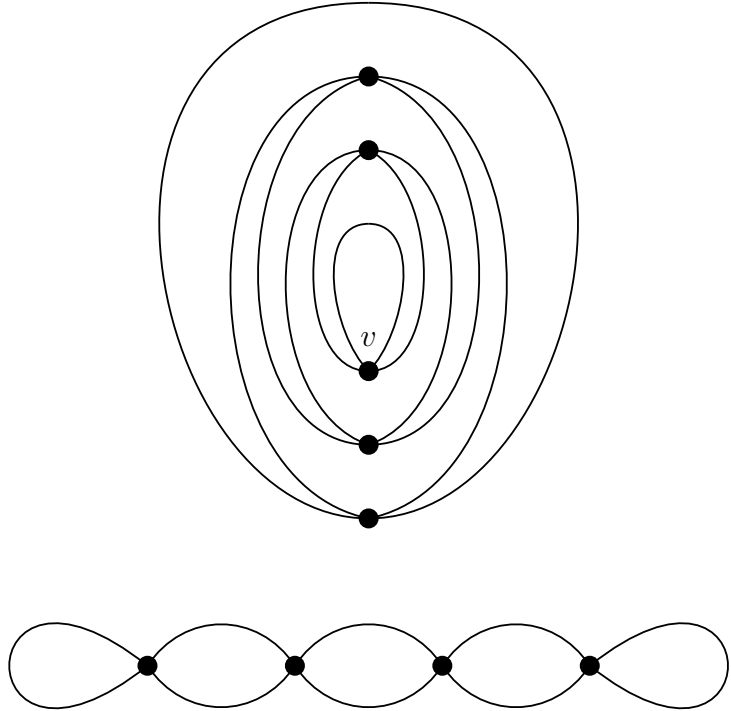

\subsection{Shadows that yield the trefoil knot}

We also investigate which knots (other than the unknot) can be guaranteed to be obtainable from a given shadow. That is, given a knot $K$ and a shadow $S$, is there an assignment on $S$ that gives a diagram of $K$?

The first natural step is the trefoil knot. If $D$ is a diagram whose corresponding knot is equivalent to the trefoil knot, then we say that $D$ is a {\em trefoil diagram}. 

For this particular knot we can give a complete answer. First let us note that there exist arbitrarily large shadows that do not have trefoil diagrams: it suffices to generalize the examples in Figure~\ref{fig:chorizo} to an arbitrary number of vertices. What we will show is that such shadows (that do not have trefoil diagrams) can be easily characterized.

A glaring feature of the shadows in Figure~\ref{fig:chorizo} is that each of their vertices is a cut-vertex. We use Tutte's notion of a cut-vertex~\cite{tut}, which we review in Section~\ref{sec:trefoil}. We show that the property that every vertex is a cut-vertex completely characterizes which shadows have only unknot diagrams, and that every shadow that does not have this feature has a trefoil diagram:

\begin{theorem}\label{thm:trefoil}
Let $S$ be a shadow. Then every diagram of $S$ is unknot if and only if every vertex of $S$ is a cut-vertex. Moreover, if not every vertex of $S$ is a cut-vertex, then $S$ has a trefoil diagram.
\end{theorem}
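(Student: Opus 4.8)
The plan is to prove Theorem~\ref{thm:trefoil} by separating it into the two implications plus the ``moreover'' clause, and handling the substantive direction via a structural analysis of a shadow that fails the cut-vertex condition. First, the easy direction: if every vertex of $S$ is a cut-vertex, then every diagram of $S$ is unknot. I would argue by induction on the number of vertices $n$. If $n=0$ the diagram is a simple closed curve, hence unknot. If $n\ge 1$, pick any vertex $v$; since $v$ is a cut-vertex of the $4$-regular plane graph $S$, deleting $v$ (or rather, splitting $S$ at $v$ in the planar sense) disconnects the underlying curve into two arcs that meet only at $v$. Regardless of the prescription at $v$, the knot is a connected sum (or just a juxtaposition) of the knots coming from the two sides; each side, after smoothing, is a shadow all of whose vertices are still cut-vertices and has fewer vertices, so by induction each piece is unknot, and a connected sum of unknots is the unknot. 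The only care needed is to make precise, in Tutte's language, what ``splitting at a cut-vertex'' does to a $4$-regular plane graph and why it produces two smaller shadows satisfying the hypothesis — this is routine but should be spelled out with a figure.

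For the contrapositive together with the ``moreover'' clause, suppose $S$ has some vertex $v$ that is not a cut-vertex; I want to produce a trefoil diagram of $S$. The key idea is to localize: because $v$ is not a cut-vertex, the two ``transversal'' ways of smoothing $v$ both leave the underlying curve connected, and I want to find three vertices $u_1,u_2,u_3$ — one of them being $v$, or at least a bounded cluster built around $v$ — through which a single strand passes in the pattern of a trefoil, while everything else can be made trivial. Concretely, I would walk along the closed curve of $S$ starting and proceeding through $v$ and look for the first place the curve crosses itself in a way that cannot be removed by a Reidemeister~I-type simplification; the failure of the cut-vertex condition guarantees that such an essential self-crossing cluster exists. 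I then set the prescription at those few vertices to realize the standard $3$-crossing trefoil picture, and at \emph{every} other vertex I choose the prescription so that the corresponding crossing is ``nugatory'' in the diagram-theoretic sense — i.e., each remaining crossing is one where the strand can be untwisted — using precisely the over/under choices dictated by the folklore unknotting argument reviewed in Section~\ref{sec:background}. The resulting diagram is then a trefoil with a number of Reidemeister~I kinks attached, hence a trefoil diagram.

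The main obstacle, and where I expect the real work to lie, is proving that the non-cut-vertex $v$ actually forces the existence of such a trefoil-able triple of crossings in a controlled region, and that the rest of the shadow can simultaneously be trivialized. The danger is interference: setting a prescription to trivialize a far-away crossing might route a strand back through the region we wanted to keep as a clean trefoil. To handle this I would use the following structural fact: if $v$ is not a cut-vertex, then $S$ contains a subcurve configuration equivalent (as an abstract chord diagram / Gauss code fragment) to the trefoil's Gauss code, sitting inside $S$ in such a way that one can first perform crossing exchanges that ``pull tight'' all chords not belonging to this fragment — each such chord, after the tightening, bounds an empty bigon or a kink and is removable — and then the fragment itself is prescribed to be the alternating trefoil. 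Formulating and proving this ``embedded trefoil Gauss subword'' statement is the crux; once it is in hand, the diagram obtained realizes the trefoil. I would also note that this same argument re-derives the first equivalence's nontrivial direction (if some vertex is not a cut-vertex, not every diagram is unknot), so the theorem follows by combining the two parts.
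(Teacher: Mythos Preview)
Your easy direction (every vertex a cut-vertex $\Rightarrow$ every diagram unknot) is correct and is essentially the paper's argument. Deriving the nontrivial direction of the equivalence from the ``moreover'' clause is also fine.

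The substantive part --- producing a trefoil diagram --- has a genuine gap. You correctly identify the goal (find three crossings that can be prescribed as a trefoil while simultaneously trivializing the rest) and the danger (interference), but you give no mechanism to find those three crossings or to neutralize the interference. Two specific problems: (a) ``nugatory'' is a property of the shadow, not of the prescription, so you cannot \emph{make} the remaining crossings nugatory by choosing over/under; what the descending trick actually gives you is an overstrand, and an overstrand that weaves through your putative trefoil region will not leave it intact; (b) the ``embedded trefoil Gauss subword'' statement is never formulated, and a single non-cut-vertex does not by itself localize such a fragment --- you need more global structure.

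The paper supplies exactly that structure. The key observation is that if not every vertex is a cut-vertex, then $S$ has a straight-ahead cycle $C$ with more than one vertex. Let $r$ be its root and $W$ the complementary straight-ahead closed walk at $r$; write $W=W_1W_2W_3$ where $u$ is the first vertex of $C$ (other than $r$) met along $W$ and $v$ the next one. If some $W_i$ is not a simple path, it contains a straight-ahead cycle $F$; pass to $S\cy F$, apply induction (the smaller shadow still has a straight-ahead cycle with more than one vertex), and extend the resulting trefoil diagram back to $S$ by making the $F$-strand an overstrand. In the base case all $W_i$ are simple paths; then $W_1$ and $W_2$ lie in opposite components of $\mathbb{R}^2\setminus C$ and hence do not cross each other. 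Prescribe $W_3$ to be an overstrand and replace it by a macro move of Type~B with a short arc in $\Delta_1$: the diagram collapses to the standard three-crossing shadow on $r,u,v$, which one then prescribes alternately. This straight-ahead cycle plus $W_1W_2W_3$ decomposition is precisely the ``controlled region'' your sketch is groping for, and the induction via $S\cy F$ is how the interference is handled.
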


It is natural to ask how far we can get along the lines of Theorem~\ref{thm:trefoil}. The next obvious step is to investigate the figure-eight knot, which is the only knot with crossing number four. One might think that for the figure-eight knot, a result analogous to Theorem~\ref{thm:trefoil} could hold, at least for all sufficiently large non-simple shadows. 

Now it is easy to construct an arbitrarily large non-simple shadow, none of whose assignments gives a diagram of the figure-eight knot. Indeed, it suffices to artificially grow the trefoil diagram by gluing to it the shadow at the bottom of Figure~\ref{fig:chorizo}, as shown in Figure~\ref{fig:paste}.

\begin{figure}[ht!]
\centering
\scalebox{0.5}{\begin{tikzpicture}[line width=1pt]
\begin{scope}
\filldraw (-6,0) circle (7pt);
\filldraw (-2,0) circle (7pt);
\filldraw (2,0) circle (7pt);
\filldraw (6,0) circle (7pt);
\draw (-10,0) ..controls (-9,1) and  (-7,1) .. (-6,0);
\draw (-10,0) ..controls (-9,-1) and  (-7,-1) .. (-6,0);
\draw (-6,0) ..controls (-5,1) and  (-3,1) .. (-2,0);
\draw (-6,0) ..controls (-5,-1) and  (-3,-1) .. (-2,0);
\draw (-2,0) ..controls (-1,1) and  (1,1) .. (2,0);
\draw (-2,0) ..controls (-1,-1) and  (1,-1) .. (2,0);
\draw (2,0) ..controls (3,1) and  (5,1) .. (6,0);
\draw (2,0) ..controls (3,-1) and  (5,-1) .. (6,0);
\draw (6,0) ..controls (10,3) and  (10,-3) .. (6,0);
\end{scope}

\begin{scope}[shift={(-13.8,-1.6)},scale=2.5]
\draw(-0.1,0.05).. controls  (0,0) and (0,-0).. (0.1,-0.05);
\draw(0.1,0.05).. controls  (0,0.) and (0,-0).. (-0.1,-0.05);
\draw (-0.1,	2.2) .. controls (-0.05,2.21) and  (0.05,2.21) ..(0.1,2.2);
\draw(0.1,2.2).. controls  (1,2.2) and (1.3,0.7).. (0.1,0.05);
\draw(-0.1,2.2).. controls  (-1,2.2) and (-1.3,0.7).. (-0.1,0.05);
\draw (0.1,-0.05) .. controls (1.5,-0.6) and (2.5,1.6).. (0,1.6);
\draw (-0.1,-0.05) .. controls (-1.5,-0.6) and (-2.5,1.6).. (-0,1.6);
\filldraw (0,0) circle (3pt);
\filldraw (0.87,1.45) circle (3pt);
\filldraw (-0.87,1.45) circle (3pt);
\filldraw (1.5,0.65) circle (3pt);
\end{scope}

\end{tikzpicture}}
\caption{Every diagram of this shadow is either a trefoil diagram or an unknot diagram, and this can be easily generalized to obtain arbitrarily large shadows with the same property.}
\label{fig:paste}
\end{figure}
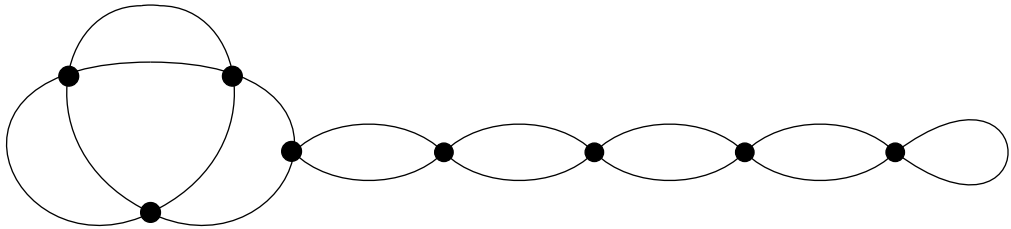

Thus, in order to meaningfully explore a possible extension of Theorem~\ref{thm:trefoil} to shadows other than the trefoil (say, to the figure-eight knot) one must exclude from consideration shadows that have cut vertices; that is, we must focus our attention on $2$-connected shadows.

As it happens, no analogue of Theorem~\ref{thm:trefoil} holds for the figure-eight knot, even if the attention is restricted to $2$-connected shadows, no matter how large. Moreover, no such analogue exists for any knot with positive even crossing number:

\begin{observation}\label{obs:figure8}
Let $K$ be any knot with positive even crossing number, and let $n$ be a positive integer. Then there exists a $2$-connected shadow $S$ with more than $n$ vertices such that, for every diagram $D$ of $S$, the knot corresponding to $D$ is not equivalent to $K$. 
\end{observation}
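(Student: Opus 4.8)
The plan is to exploit a parity obstruction coming from the crossing number. The key observation is that if a knot $K$ has crossing number $c(K)$, then any diagram $D$ of $K$ has at least $c(K)$ crossings; more usefully, for the purpose of this construction we want to build a $2$-connected shadow on which \emph{no} assignment can produce a diagram equivalent to $K$ whenever $c(K)$ is even and positive. I would achieve this by producing a shadow all of whose diagrams have odd writhe, or more robustly, by controlling the possible knot types directly via a connected-sum-like construction.

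Concretely, first I would recall the standard ``twist region'' gadget: a chain of $k$ bigons in a row (a $2$-braid on two strands with $k$ crossings), whose diagrams are exactly the $(2,j)$ torus links/knots for $-k \le j \le k$ with $j \equiv k \pmod 2$, together with unknot diagrams. In particular, if we take a single large twist region with $k$ crossings and close it up into a $2$-bridge-type shadow, every diagram of that shadow is a $(2,j)$-torus knot (possibly the unknot), and the figure-eight knot is never a $(2,j)$-torus knot. This already handles the figure-eight case; the twist-region shadow is $2$-connected for $k\ge 2$, and we can make $k$ as large as we like, giving more than $n$ vertices. For a general knot $K$ with positive even crossing number, I would instead argue as follows: take the shadow $S$ to be the standard shadow of a $(2,j)$-torus knot with a large even $j$ (so $S$ is $2$-connected and has $j > n$ vertices), and observe that every diagram of $S$ is a $(2,i)$-torus knot or the unknot, where $i$ has the same parity as $j$, hence $i$ is even. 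Since $c(K)$ is even and positive, $K$ is not the unknot, so if $K$ were a diagram of $S$ it would be a $(2,i)$-torus knot with $i$ even and $|i| \ge 2$; but the $(2,i)$-torus link with $i$ even is a \emph{two-component link}, not a knot, and with $i$ odd it is a knot with odd crossing number $|i|$. Either way it cannot equal $K$, whose crossing number is even. Wait — this last parity bookkeeping needs care, so let me restructure: I would choose $j$ \emph{odd} and large, so every diagram of $S$ is a $(2,i)$-torus \emph{knot} (or the unknot) with $i$ odd, hence of odd crossing number or crossing number $0$; since $c(K)$ is positive and even, $K$ is not among these, and $S$ is $2$-connected with $j>n$ vertices.

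The main obstacle I anticipate is verifying cleanly that \emph{every} assignment on the chosen torus-knot shadow yields either the unknot or a $(2,i)$-torus knot with $i$ odd — i.e., that no ``exotic'' knot can sneak out of crossing exchanges on this particular shadow. This follows because the shadow of the standard $(2,j)$-torus diagram is a cycle of $j$ bigons, and on such a shadow each crossing exchange either cancels a bigon (Reidemeister II) or flips its sign; tracking this shows the resulting diagram is always a reduced or reducible $(2,i)$ diagram with $i \equiv j \pmod 2$. I would state this as a short lemma (``every diagram of the standard $(2,j)$-torus shadow is a $(2,i)$-torus knot for some $i \equiv j \pmod 2$, or the unknot'') and prove it by an explicit bigon-by-bigon reduction. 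With that lemma in hand, the Observation follows immediately by taking $j$ to be any odd integer exceeding $n$: the shadow is $2$-connected, has $j > n$ vertices, and none of its diagrams is equivalent to $K$ since $c(K)$ is positive and even while every diagram of $S$ has crossing number $0$ or an odd number.
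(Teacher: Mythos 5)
Your construction is exactly the paper's: the proof of Observation~\ref{obs:figure8} also takes the cycle-of-bigons shadow (the standard $(2,j)$-torus shadow $C_j$) with $j$ odd and larger than $n$, and excludes $K$ by the same parity-of-crossing-number obstruction. The only difference is how the key lemma is verified --- the paper inducts on $j$, using a Reidemeister~II reduction for non-alternating diagrams and the First Tait Conjecture for alternating ones, whereas you classify every diagram directly as a $(2,i)$-torus knot with $i\equiv j\pmod 2$ (most cleanly via the braid word $\sigma_1^{\epsilon_1}\cdots\sigma_1^{\epsilon_j}=\sigma_1^{\sum\epsilon_i}$); both verifications are correct.
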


\subsection{Outline of the proof of Theorem~\ref{thm:maintheorem} and structure of this paper}

In the next section we overview some previous results in the literature related to Theorem~\ref{thm:maintheorem}, and in Section~\ref{sec:shadows} we review some elementary facts on shadows that we use extensively in this paper.

Our proof of Theorem~\ref{thm:maintheorem} is of an inductive nature. In a nutshell the idea is that, given a shadow $S$, we obtain from $S$ a smaller shadow $S'$ that is (loosely speaking) contained in $S$, and then show that every unknot diagram of $S'$ can be extended to at least two unknot diagrams of $S$. Applying a similar procedure to $S'$, by an inductive reasoning we find the claimed lower bound for $\uu(S)$.

For an illustration of one of the techniques we use, we refer the reader to Figure~\ref{fig:4from1}. If we remove from $S$ the edges of the thick cycle $C$ and suppress all resulting degree $2$ vertices, we obtain the shadow $S'$ shown below $S$. We will use the notation $S'=S\cy C$ to indicate that $S'$ is obtained from $S$ in this way. To the right of $S'$ we show an unknot diagram $D'$ of $S'$, and above $D'$ we show how to extend $D'$ to four unknot diagrams of $S$. The idea here is simply that it suffices to have the strand corresponding to $C$ lie entirely above or entirely below the rest of the diagram. This property implies that $D'$ and $D$ are equivalent; since $D'$ is unknot, then these four diagrams are unknot.

We note that this technique cannot be applied using an arbitrary cycle $C$. We can use it only if $C$ is a {\em straight-ahead} cycle, that is, as we traverse $C$ (starting from some of its vertices), then, as we reach an intermediate vertex $v$, we must continue traversing $C$ using the edge opposite to the one from which we arrived to $v$. (Straight-ahead cycles are called {loops} in~\cite{erickson}).

In order to simplify our arguments, we will use the notion of a {\em macro move} in a diagram (following the terminology used by Kaufmann in~\cite{macro}). An instance of a macro move is precisely when we have a strand all of whose crossings lie ``above'' the rest of the diagram; such a strand is an {\em overstrand}. Similarly, if all the crossings of the strand lie ``below'' the rest of the diagram, then the strand is an {\em understrand}. An overstrand (respectively, understrand) can be replaced with another overstrand (respectively, understrand) with the same endpoints, if these endpoints are distinct; the diagram thus obtained is equivalent to the original diagram. If an overstrand or understrand starts and ends at the same crossing point, then we can collapse the strand to this point, and the diagram thus obtained is equivalent to the original diagram. This is precisely the operation performed on the thick strands in diagrams $D_1,D_2,D_3$, and $D_4$ in Figure~\ref{fig:4from1}. We will review macro moves in Section~\ref{sec:gen}.

Back to the discussion on the proof of Theorem~\ref{thm:maintheorem}, we illustrate the second main technique we use in Figure~\ref{fig:2from1}. In the shadow $S$ we identify two parallel edges with common endpoints $u$ and $v$; these two edges and their endpoints form a {\em digon}. We obtain $S'$ by splitting the vertices $u$ and $v$ as shown. Chang and Erickson call this type of operation a $2\to 0$ homotopy move, and this suggests that $S'$ has two fewer vertices than $S$. To the right of $S'$ we have an unknot diagram $D'$ of $S'$. This diagram can be extended to the two illustrated diagrams $D_1$ and $D_2$ of $S$. In each of $D_1$ and $D_2$ we can apply a Reidemeister move of Type II on the strands corresponding to the identified digon, obtaining $D'$. Since $D'$ is unknot, it follows that both $D_1$ and $D_2$ are unknot. 

In Section~\ref{sec:first} we formally explain and develop this first technique we have outlined. The limitation of the first technique is that, in order to successfully apply it recursively, it must be used on straight-ahead cycles that have a relatively small number of vertices: if the cycle whose edges we remove has a large number of vertices, then the resulting shadow $S'$ will have too few vertices, and so the inductive argument will yield a poor lower bound for $|\uu(S)|$. The problem is that we cannot guarantee that there always exist (relatively) small straight-ahead cycles. It is easy to give examples of arbitrarily large shadows for which there are only two cycles on which this technique can be used, and getting rid of one of these cycles trivializes the whole shadow.

For such cases in which the first technique does not suffice, we have at hand the second technique. Now the obvious limitation of this second technique is that there exist shadows that do not have any digons. To handle this possibility we need a variant of this second technique, which can be applied to a pair of internally disjoint paths with common endpoints. Such a pair of paths has a natural underlying digon-like structure, and most of the work needed to prove Theorem~\ref{thm:maintheorem} involves the extension of the second technique to take advantage of such digon-like structures. Part of this work involves a detailed understanding on the existence and structure of digons in a system of two simple closed curves in the plane. This is carried out in Section~\ref{sec:digons}. With this information at our disposal, we then formally lay out the second technique in Section~\ref{sec:second}. 

The final step, given in Section~\ref{sec:proofmain}, consists then on showing how these two techniques can be combined to yield Theorem~\ref{thm:maintheorem}. 

The proof of Theorem~\ref{thm:trefoil} is in Section~\ref{sec:trefoil}. In a nutshell, to prove Theorem~\ref{thm:trefoil} we investigate the structure of a shadow not all of whose vertices are cut vertices, and then explicitly construct a trefoil diagram for any such shadow. Also in this section we describe the infinite family of shadows that proves Observation~\ref{obs:figure8}.
Finally, in Section~\ref{sec:concludingremarks} we present some concluding remarks and open questions.

\section{Related work}\label{sec:background}

Let us start by recalling the well-known argument showing that if $S$ is a shadow, then there is an assignment on $S$ that gives an unknot diagram. Let $S$ be a shadow, and let $p$ be a point in the middle of an edge of $S$. Starting at $p$, we traverse the curve underlying $S$, following any of the two possible directions (see Figure~\ref{fig:stun}(a)). We obtain a diagram from $S$ by following the rule that each time we arrive to a vertex for the first time, then we assign a crossing to this vertex so that the strand we are currently traversing is the overstrand. If we apply this procedure to the shadow in Figure~\ref{fig:stun}(a), we get the unknot diagram shown in Figure~\ref{fig:stun}(b). It is easy to convince oneself that a diagram produced in this manner is unknot; a formal proof can be found in~\cite{adams}*{Section 3.1}.

An unknot diagram obtained in this way is called a {\em standard unknot diagram} (see~\cite{knotdiagrammatics}), and also a {\em descending diagram} (see~\cite{mro} and ~\cite{ozawa}). To understand the motivation behind the terminology ``descending'', let us consider a knot that gets projected to such a diagram. Suppose that the $z$-axis is the one orthogonal to the plane where the diagram is drawn. Then the knot can be realized (in $3$-dimensional space) so that, as we traverse the knot starting at the point $p'$ that projects to $p$, the $z$-coordinates are descending until we get to a point $q'$ whose projection is very close to $p$, and finally we join $q'$ to $p'$ with a $z$-ascending strand.

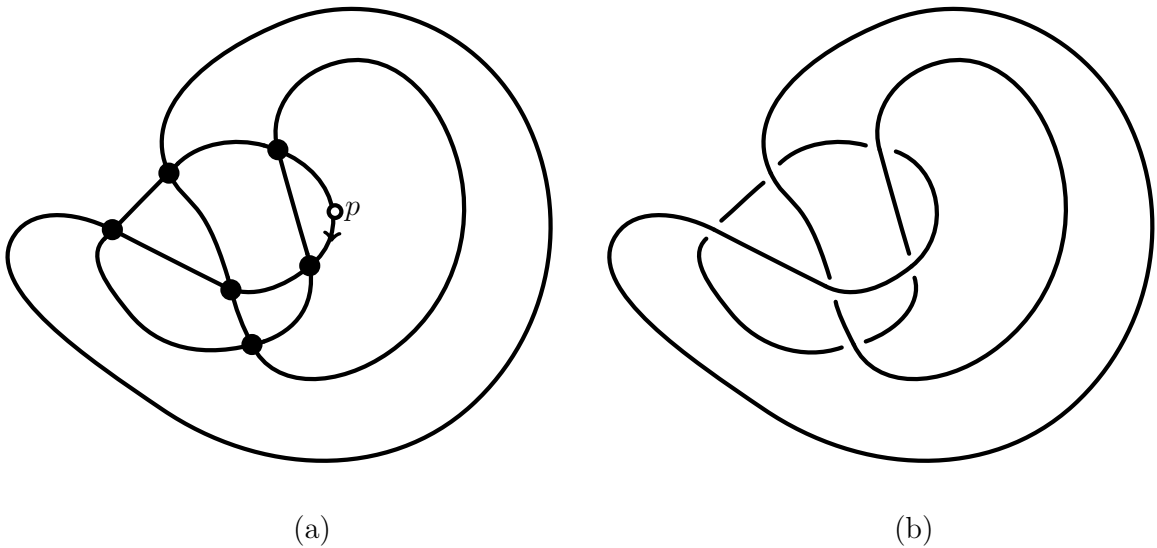
\begin{figure}[ht!]
\centering
\scalebox{0.8}{\begin{tikzpicture}[line width=2.0, line cap=round, line join=round]
  \begin{scope}
    \draw (2.18, 4.30) .. controls (1.54, 4.62) and (0.77, 4.68) .. 
          (0.50, 4.12) .. controls (0.10, 3.29) and (1.74, 2.17) .. 
          (3.01, 1.31) .. controls (4.62, 0.22) and (6.75, 0.10) .. 
          (8.15, 1.38) .. controls (9.47, 2.59) and (9.84, 4.52) .. 
          (9.05, 6.12) .. controls (8.31, 7.61) and (6.59, 8.37) .. 
          (5.06, 7.76) .. controls (3.77, 7.25) and (2.62, 6.29) .. (3.12, 5.24);
    \draw (3.12, 5.24) .. controls (3.40, 4.66) and (3.68, 4.9) .. (4.1, 3.5);
    \draw (4.1, 3.5) .. controls (4.23, 2.94) and (4.36, 2.66) .. (4.50, 2.39);
    \draw (4.50, 2.39) .. controls (4.87, 1.61) and (5.96, 1.68) .. 
          (6.78, 2.26) .. controls (7.78, 2.96) and (8.24, 4.21) .. 
          (7.93, 5.39) .. controls (7.67, 6.40) and (6.94, 7.27) .. 
          (6.01, 7.10) .. controls (5.27, 6.96) and (4.74, 6.31) .. (4.93, 5.63);
    \draw (4.93, 5.63) .. controls (5.09, 5.04) and (5.26, 4.44) .. (5.42, 3.9);
    \draw (5.42, 3.9) .. controls (5.65, 3.04) and (5.19, 2.61) .. (4.7, 2.44);
    \draw (4.7, 2.44) .. controls (3.66, 2.12) and (2.92, 2.35) .. 
          (2.47, 2.90) .. controls (2.11, 3.34) and (1.73, 3.85) .. (2.05, 4.15);
    \draw (2.05, 4.15) .. controls (2.53, 4.65) and (2.77, 4.89) .. (3.27, 5.4);
    \draw (3.27, 5.4) .. controls (3.63, 5.75) and (4.23, 5.83) .. (4.7, 5.7);
    \draw (4.7, 5.7) .. controls (5.93, 5.35) and (6.16, 4.25) .. (5.46, 3.69);
    \draw (5.46, 3.69) .. controls (5.04, 3.35) and (4.50, 3.12) .. (4.03, 3.36);
    \draw (4.03, 3.36) .. controls (3.41, 3.67) and (2.80, 3.98) .. (2.18, 4.30);
\filldraw (2.18,4.30)  circle [radius=4pt];
\filldraw (3.12, 5.24)  circle [radius=4pt];
\filldraw (4.15, 3.3)  circle [radius=4pt];
\filldraw (4.5, 2.39)  circle [radius=4pt];
\filldraw (4.93, 5.63)  circle [radius=4pt];
\filldraw (5.46,3.7)  circle [radius=4pt];
\filldraw (5.88,4.6)  circle [radius=3pt] [black,fill=white] node [black,right=0.2] {\Large  $p$};
\draw [->] (5.83,4.2)-- (5.82,4.1);
\node at (5.5,-0.7) {\Large (a)};
  \end{scope}
  \begin{scope}[shift={(10,0)}]
    \draw (2.18, 4.30) .. controls (1.54, 4.62) and (0.77, 4.68) .. 
          (0.50, 4.12) .. controls (0.10, 3.29) and (1.74, 2.17) .. 
          (3.01, 1.31) .. controls (4.62, 0.22) and (6.75, 0.10) .. 
          (8.15, 1.38) .. controls (9.47, 2.59) and (9.84, 4.52) .. 
          (9.05, 6.12) .. controls (8.31, 7.61) and (6.59, 8.37) .. 
          (5.06, 7.76) .. controls (3.77, 7.25) and (2.62, 6.29) .. (3.12, 5.24);
    \draw (3.12, 5.24) .. controls (3.40, 4.66) and (3.68, 4.9) .. (4.1, 3.5);
    \draw (4.2, 3.1) .. controls (4.23, 2.94) and (4.36, 2.66) .. (4.50, 2.39);
    \draw (4.50, 2.39) .. controls (4.87, 1.61) and (5.96, 1.68) .. 
          (6.78, 2.26) .. controls (7.78, 2.96) and (8.24, 4.21) .. 
          (7.93, 5.39) .. controls (7.67, 6.40) and (6.94, 7.27) .. 
          (6.01, 7.10) .. controls (5.27, 6.96) and (4.74, 6.31) .. (4.93, 5.63);
    \draw (4.93, 5.63) .. controls (5.09, 5.04) and (5.26, 4.44) .. (5.42, 3.9);
    \draw (5.51, 3.5) .. controls (5.65, 3.04) and (5.19, 2.61) .. (4.7, 2.44);
    \draw (4.3, 2.34) .. controls (3.66, 2.12) and (2.92, 2.35) .. 
          (2.47, 2.90) .. controls (2.11, 3.34) and (1.73, 3.85) .. (2.05, 4.15);
    \draw (2.3, 4.45) .. controls (2.53, 4.65) and (2.77, 4.89) .. (3, 5.08);
    \draw (3.27, 5.4) .. controls (3.63, 5.75) and (4.23, 5.83) .. (4.7, 5.7);
    \draw (5.2, 5.6) .. controls (5.93, 5.35) and (6.16, 4.25) .. (5.46, 3.69);
    \draw (5.46, 3.69) .. controls (5.04, 3.35) and (4.50, 3.12) .. (4.03, 3.36);
    \draw (4.03, 3.36) .. controls (3.41, 3.67) and (2.80, 3.98) .. (2.18, 4.30);
\node at (5.5,-0.7) {\Large (b)};
  \end{scope}
\end{tikzpicture}}
\caption{Creating an uknot diagram starting from any given shadow.}
\label{fig:stun}
\end{figure}

As we mentioned in Section~\ref{sec:intro}, for any given shadow $S$ with $n$ vertices, the number of unknot diagrams of $S$ that can be produced with this construction is at most $4n$. Indeed, there are $2n$ ways to choose $p$ (since $S$ has $2n$ edges), and for each of these $2n$ choices there are two possible directions to follow.

There seem to be very few results in the literature involving the number of unknot diagrams of an arbitrary shadow. Recently, Cantarella et al.~\cite{canta} carried out a complete investigation of the knot types that arise in all shadows with $10$ or fewer crossings. Their results show that, among all the diagrams associated to this huge collection of shadows, roughly $78\%$ are unknot diagrams. Cantarella et al.~observed that this large proportion is explained by the existence of ``tree-like'' shadows, such as the ones depicted in Figure~\ref{fig:chorizo}. These are shadows in which every vertex is a cut-vertex, and so an easy induction shows that all of their diagrams are unknot.


There is a collection of quite interesting, deep results in the literature related to the complementary aspect of Theorem~\ref{thm:maintheorem}: out of the $2^n$ diagrams that have $S$ as their shadow, how many are {\em not} unknot diagrams?

According to Sumners and Whittington~\cite{sumners1}, this was first asked in the context of long linear polymer chains, independently by Frisch and Wasserman~\cite{frisch} and Delbruck~\cite{delbruck}. In our context, the Frisch-Wasserman-Delbruck Conjecture may be roughly paraphrased as follows: if $p(n)$ is the probability that a random knot diagram with $n$ crossings is knotted, then $p(n)\to 1$ as $n\to\infty$.

The first issue to settle in an investigation of the Frisch-Wasserman-Delbruck Conjecture is: what is a random knot diagram? In~\cite{sumners1}, Sumners and Whittington investigated (and settled in the affirmative) this conjecture in the model of self-avoiding walks on the three-dimensional simple cubic lattice (see~\cite{pippenger} for a closely related result). The conjecture has also been settled in other models of space curves, such as self-avoiding Gaussian polygons and self-avoiding equilateral polygons~\cites{arsuaga,buck,jungreis,diao,diao2}. 

The problem of proposing suitable models of random knot diagrams is of interest by itself. The associated difficulties are explained and discussed in~\cite{diao3}, where two different such models are presented and investigated. We also refer the reader to the preliminary report by Dunfield et al.~in~\cite{dunfield}. It is also worth mentioning the very recent work of Even-Zohar et al.~\cite{even-zohar}, where several rigorous results are established on the distributions of knot and link invariants for the Petaluma model, which is based on the representation of knots and links as petal diagrams~\cite{adamspetal}. In a very interesting recent development, Chapman~\cites{chapman1,chapman2} proved the Frisch-Wasserman-Delbruck conjecture under a very general and natural model.

From the work of Chapman we know that if we take a random diagram on $n$ vertices, then the probability that it is an unknot diagram decreases exponentially with $n$. Our work in this paper focuses on the abundance of unknot diagrams associated to an arbitrary (that is, not to a random) shadow.

\section{Basic facts on shadows}\label{sec:shadows}

A shadow of a diagram with at least one crossing is a $4$-regular plane graph. For our upcoming arguments it will be convenient to admit a vertex-less simple closed curve as a shadow. This {\em trivial} shadow is the shadow of a {\em trivial} diagram, that is, a diagram with no crossings.

Thus every shadow is a $4$-regular plane graph, but not every $4$-regular plane graph is a shadow of a knot. A $4$-regular graph is a shadow if and only if it has a straight-ahead Eulerian closed walk. We recall that a {\em straight-ahead} walk in a $4$-regular graph is a walk in which every time we reach a vertex $v$ in the walk, the next edge in the walk is the opposite edge to the one from which we arrived to $v$. A walk is {\em closed} if its final vertex is the same as its initial vertex, and it is {\em Eulerian} if each edge of the graph gets traversed exactly once in the graph.

We also recall that if $C$ is a connected subgraph of a graph $G$, and every vertex of $C$ has degree two (in $C$), then $C$ is a {\em cycle} of $G$. For our purposes, a vertex-less graph (which, we recall, consists of a single edge, homeomorphic to a simple closed curve) will be considered a cycle.

If $C$ is a cycle of a shadow $S$, and there exists a straight-ahead walk that traverses each edge of $C$ exactly once, then $C$ is a {\em straight-ahead} cycle of $S$. The initial (and final) vertex of this walk is a {\em root} of $C$. For an illustration of a straight-ahead cycle in a shadow we refer the reader to the top left of Figure~\ref{fig:4from1}. The trivial shadow has no vertices, but by convention we regard it as a straight-ahead cycle. Finally we note that it is easy to see that a straight-ahead cycle in a nontrivial shadow has a unique root. 

In our arguments we make extensive use of the following properties of shadows. The first one is totally straightforward.

\begin{fact}\label{fac:fact1}
Let $v$ be a vertex of a shadow $S$. Then there is an Eulerian straight-ahead closed walk $W$ that starts and ends at $v$. The walk $W$ gets naturally decomposed into two edge-disjoint straight-ahead closed walks $W_1,W_2$, both of which have $v$ as their startvertex and endvertex.
\end{fact}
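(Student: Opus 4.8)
The plan is to start from the Eulerian straight-ahead closed walk guaranteed by the definition of a shadow, and then re-root it at $v$ and cut it in half. Since $S$ is a shadow, by definition it admits \emph{some} straight-ahead Eulerian closed walk $W_0$; say it starts and ends at a vertex $u$ (possibly $u=v$). The first step is to observe that a straight-ahead Eulerian closed walk can be re-rooted at any vertex it passes through. Indeed, $W_0$ visits $v$ at least once (it visits every vertex, since it is Eulerian and $S$ is $4$-regular and connected), so write $W_0 = P\,Q$, where $P$ runs from $u$ to the first occurrence of $v$ and $Q$ runs from that occurrence of $v$ back to $u$. Then $W := Q\,P$ is a closed walk starting and ending at $v$, it traverses every edge exactly once (it uses the same edges as $W_0$), and it is still straight-ahead: the straight-ahead condition is a purely local condition at each intermediate vertex of the walk — the edge leaving a vertex is the one opposite the edge by which the vertex was entered — and cyclically rotating the walk does not change which pairs of consecutive edges meet at which vertices, except possibly at the new basepoint $v$, where we do not impose the condition (a closed walk has no ``intermediate'' occurrence constraint at its endpoints). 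So $W$ is an Eulerian straight-ahead closed walk based at $v$.

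The second step is the decomposition. At the vertex $v$ the four incident edge-ends are partitioned into two opposite pairs by the $4$-regular plane (or abstractly, rotation) structure; call them $\{a,a'\}$ and $\{b,b'\}$, where $a,a'$ are opposite and $b,b'$ are opposite. The walk $W$ starts along one of these four edge-ends, say $a$, and — because it is straight-ahead and $v$ is visited by $W$ (it must be: $W$ is Eulerian, so all four edge-ends at $v$ are used, and used in straight-ahead pairs) — when $W$ next arrives at $v$ it must arrive along $a'$ (the edge opposite $a$), and then by the straight-ahead rule it \emph{would} leave along $a$ again; since $a$ is already used, this occurrence of $v$ must in fact be the endpoint of the first half. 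So let $W_1$ be the portion of $W$ from its start up to and including this return to $v$ along $a'$, and let $W_2$ be the remaining portion, from $v$ (leaving along $b$, say) back to $v$ (arriving along $b'$). Each of $W_1,W_2$ is a closed walk based at $v$; they are edge-disjoint and together use every edge of $S$ exactly once; and each is straight-ahead, because the straight-ahead condition at every intermediate vertex is inherited from $W$, and the only vertex where we have ``broken'' $W$ is $v$ itself, which occurs only as an endpoint of $W_1$ and of $W_2$.

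I do not expect a serious obstacle here; the statement is indeed ``totally straightforward'' as the authors say. The one point that deserves a careful word, and which I would make explicit, is \emph{why} the walk $W$ must return to $v$ exactly once before it terminates — i.e.\ why $v$ is visited by $W$ with multiplicity exactly two, producing a clean two-piece decomposition rather than $W$ threading through $v$ several times. This follows from the straight-ahead property together with $4$-regularity: each visit of a straight-ahead walk to a vertex consumes one opposite pair of edge-ends, $v$ has exactly two such pairs, and an Eulerian walk uses all of them, so $v$ is visited exactly twice, and the two visits split $W$ at precisely one interior point. A second minor point worth stating is that a shadow is connected (it underlies a knot diagram, i.e.\ the image of a single closed curve), so the Eulerian walk does reach $v$; for the trivial (vertex-less) shadow the statement is vacuous or trivial and can be dispatched in one line.
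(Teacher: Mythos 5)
The paper itself offers no proof of this fact (it is dismissed as ``totally straightforward''), so the only question is whether your argument is sound. Your strategy is the right one: re-root the Eulerian straight-ahead closed walk at $v$, note that the four edge-ends at $v$ are consumed in two opposite pairs so that $v$ is passed exactly twice, and cut at the unique interior pass. But the decomposition step as written contains a concrete error. You claim that after leaving $v$ along the edge-end $a$, the walk $W$ \emph{next} returns to $v$ along the opposite edge-end $a'$ and is forced to stop there because the straight-ahead rule would demand re-using $a$. This is backwards: since $W$ is Eulerian it still has to traverse the two edges incident with $v$ at $b$ and $b'$ after its first return, so it cannot terminate there; the first return to $v$ is the interior pass, which arrives along one of $b,b'$ and departs along the other (these \emph{must} be opposite for $W$ itself to be straight-ahead at that occurrence of $v$), and it is the \emph{final} arrival of $W$ that comes in along $a'$. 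Your proposed halves $W_1\colon a\to\cdots\to a'$ and $W_2\colon b\to\cdots\to b'$ are therefore inconsistent with $W$ being straight-ahead: they would force the interior pass of $W$ through $v$ to arrive along $a'$ and leave along $b$, which are not opposite. The fix is local: cut $W$ at its unique interior pass, so that $W_1$ runs from the departure along $a$ to the arrival along (say) $b$, and $W_2$ from the departure along $b'$ to the arrival along $a'$; each half is then a closed walk based at $v$, edge-disjoint from the other, and straight-ahead because all its interior occurrences of vertices inherit the condition from $W$, while $v$ occurs in each half only as an endpoint, where no condition is imposed.

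A second, smaller point concerns the re-rooting. When you cyclically rotate $W_0$ to base it at $v$, the old basepoint $u$ becomes an \emph{interior} vertex of the rotated walk, so you need the straight-ahead condition to hold ``across'' the basepoint of $W_0$, i.e.\ the last edge of $W_0$ must be opposite to its first edge at $u$. This is the natural cyclic reading of the Eulerian straight-ahead closed walk on a shadow (it is the projection of a single closed curve), but your phrasing that no condition is imposed at the basepoint of a closed walk suggests the opposite convention, under which the rotation argument would not go through. One clarifying sentence fixes this as well.
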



\begin{fact}\label{fac:fact2}
Every nontrivial shadow has at least two distinct (necessarily edge-disjoint) straight-ahead cycles.
\end{fact}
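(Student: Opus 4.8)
The plan is to prove Fact~\ref{fac:fact2} by analyzing the structure of the Eulerian straight-ahead closed walk guaranteed by Fact~\ref{fac:fact1}. Pick any vertex $v$ of the nontrivial shadow $S$. By Fact~\ref{fac:fact1}, there is an Eulerian straight-ahead closed walk $W$ based at $v$, which decomposes into two edge-disjoint straight-ahead closed walks $W_1$ and $W_2$, both starting and ending at $v$. The key observation is that these two walks are precisely the two strands through the crossing $v$ (with over/under information ignored): at $v$, the four edge-ends pair up into two ``opposite'' pairs by the straight-ahead condition, and $W_1$ uses one pair while $W_2$ uses the other. Since $S$ is $4$-regular, every vertex is entered and left exactly twice in total by $W$, and at each intermediate vertex the straight-ahead rule forces the traversal to leave by the opposite edge; so each of $W_1$ and $W_2$ is a closed walk that, at every vertex it visits other than $v$, may pass through once or twice but always straight-ahead.

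The main step is to extract a genuine straight-ahead \emph{cycle} from each $W_i$. I would argue as follows: $W_1$ is a closed straight-ahead walk; if it is not already a cycle, it repeats some vertex. Take a shortest closed ``sub-excursion'' of $W_1$ — that is, follow $W_1$ from some vertex until the first time a vertex is repeated; the portion of $W_1$ between the two occurrences of that repeated vertex is a closed straight-ahead walk that visits every vertex at most once except its basepoint, hence it is a straight-ahead cycle $C_1$ of $S$ (here one uses that a closed walk with no repeated vertices except the endpoints, all of whose internal vertices are traversed straight-ahead, is a cycle in the graph-theoretic sense defined in the excerpt). Do the same with $W_2$ to get a straight-ahead cycle $C_2$. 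Because $W_1$ and $W_2$ are edge-disjoint and $C_i \subseteq W_i$, the cycles $C_1$ and $C_2$ are edge-disjoint, and in particular distinct. If $S$ happens to have exactly two edges at $v$ forming a loop-like configuration one must be slightly careful, but since $S$ is $4$-regular and nontrivial this degenerate case still yields two edge-disjoint straight-ahead cycles (each $W_i$ is nonempty and traverses at least one edge).

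The hard part will be making the ``extract a shortest closed sub-excursion'' argument fully rigorous within the paper's conventions, in particular handling the basepoint $v$ correctly: a priori the only repeated vertex forced to exist is $v$ itself (if $W_i$ is a simple cycle through $v$ we are done immediately), but if $W_i$ revisits some other vertex $u$ we need to make sure the sub-excursion we extract is nonempty and is straight-ahead at $u$ as well. One clean way around this is to induct on the number of edges: if $W_1$ is already a cycle, stop; otherwise it revisits a vertex $u$, and splitting $W_1$ at $u$ (using the straight-ahead structure at $u$, which forces the two visits to use the two opposite edge-pairs exactly as at $v$) produces two shorter edge-disjoint straight-ahead closed walks, at least one of which is nonempty; apply the inductive hypothesis to it. I expect the bookkeeping around these splittings, and confirming the straight-ahead property is inherited by the pieces, to be the only real obstacle; the rest is routine. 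Finally, observe that the conclusion ``distinct straight-ahead cycles are necessarily edge-disjoint'' follows for free: if two straight-ahead cycles shared an edge $e$, then starting at an endpoint of $e$ and traversing straight-ahead determines the rest of the cycle uniquely at each step, so the two cycles would coincide.
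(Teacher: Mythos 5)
Your proposal is correct and follows essentially the same route as the paper: decompose the Eulerian straight-ahead walk at $v$ into the two edge-disjoint closed walks $W_1,W_2$ via Fact~\ref{fac:fact1}, and then extract a straight-ahead cycle from each by induction on a repeated vertex (the paper inducts on the number of vertices of the walk, which is the same minimal-sub-excursion idea you describe). Your closing remark that distinct straight-ahead cycles are automatically edge-disjoint, because straight-ahead traversal from any edge is deterministic, is a correct justification of the parenthetical in the statement.
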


\begin{proof}
Let $v$ be a vertex of a shadow $S$, and let $W_1,W_2$ be the straight-ahead closed walks (from Fact~\ref{fac:fact1}) that start and end at $v$. We show that as we traverse $W_1$, we must encounter a straight-ahead cycle. Since the argument is symmetric on $W_1$ and $W_2$, and these walks are edge-disjoint, this proves the fact.

We prove this by induction on the number of vertices of the walk. If $v$ is the only vertex of $W_1$, then this is obvious, as $W_1$ is itself a straight-ahead cycle. Thus suppose that $W_1$ has at least one vertex other than $v$. Suppose that $W_1$ is not a straight-ahead cycle. Then there exists a vertex $u$ other than $v$ that gets visited twice as we traverse $W_1$. The subwalk of $W_1$ that starts and ends at $u$ has fewer vertices that $W_1$, and by the induction hypothesis it has a straight-ahead cycle $C$, which is also a straight-ahead cycle of $W_1$.
\end{proof}

\section{Macro moves}\label{sec:gen}

The three Reidemeister moves are particular cases of more general operations that turn a diagram $D$ into an {\em equivalent} diagram $D'$, that is, a diagram $D'$ whose corresponding knot is isotopic to the knot represented by $D$. Following Kauffman~\cite{macro}, we call these operations {\em macro moves}. We quote the following lively passage from~\cite{macro}: 

\bigskip

{\leftskip=20pt\rightskip=20pt
\noindent\cite{macro}*{Section 2}{\em ``In a macro move, we identify an arc that passes entirely under some piece of the diagram (or entirely over) and shift this part of the arc, keeping it under (or over) during the shift.''} 
\par}

\bigskip

We also refer the reader to the interesting discussion on macro moves (although not using this terminology) in {\tt mathoverflow.net} initiated by Gowers~\cite{gowers}.

We will use two particular kinds of macro moves. Before describing them, let us clarify the notion of strand that we will use throughout this work.

\begin{figure}[htbp]
    \centering
        \scalebox{0.55}{ \begin{tikzpicture}[line width=1pt]
\begin{scope}[shift={(-3.,-0.5)}]
\draw (-4,-3) ..controls (-3,-3) and (-1,-2).. (0,0)..controls (1,1.5) and (-8,2.5).. (-2,3);
\filldraw[white] (0,0) circle (10pt); 
\end{scope}
\begin{scope}[shift={(-3,-0.5)}]
\draw(-2,3)..controls (-0.5,7) and (5,-1.5).. (0,0) ..controls (-5,1.8) and (-7,-3) .. (-4,-3);
\filldraw[white] (-1.9,3.2) circle (5pt); 
\draw (-2.3,3.4) node { \huge$x$};
\draw (0.5,-2) node {\huge $\sigma$};
\end{scope}
\begin{scope}[shift={(-3,-0.5)}]
\filldraw[white] (1.15,2.68) circle (18pt); 
\filldraw[white] (-4.3,-0.2) circle (12pt); 
\filldraw[white] (-1.8,-2.1) circle (15pt); 
\filldraw[white] (-3.6,2.15) circle (10pt); 
\end{scope}
\begin{scope}[shift={(-3,-0.5)}]
\draw [line width =3pt] (-2,3)..controls  (5,4) and (3,-3) .. (-3,-2).. controls (-4,-1.8) and (-4.2,-1).. (-4.3,-0.5);
\draw[line width =3pt](-4.3,-0.5) .. controls (-4.4,0) and (-4.3,1.2).. (-4.2,1.5) .. controls (-4,3) and (-2,1.6) .. (-2,2.67)   ;
\draw[,->, line width=3pt]  (-2.4,2.23) --(-2.6,2.23);
\end{scope}
\begin{scope}[shift={(7,-0.5)}]
\draw (-4,-3) ..controls (-3,-3) and (-1,-2).. (0,0)..controls (1,1.5) and (-8,2.5).. (-2,3);
\filldraw[white] (0,0) circle (10pt); 
\end{scope}
\begin{scope}[shift={(7,-0.5)}]
\draw(-2,3)..controls (-0.5,7) and (5,-1.5).. (0,0) ..controls (-5,1.8) and (-7,-3) .. (-4,-3);
\filldraw[black] (-2,3) circle (2pt);
\end{scope}
\end{tikzpicture}}
    \caption{A macro move of Type A. In the diagram $D$ on the left-hand side, the thick strand $\sigma$ is an overstrand based at $x$. The diagram on the right-hand side is obtained by collapsing $\sigma$ to $x$, that is, by removing $\sigma\setminus\{x\}$ from $D$. Therefore these diagrams are equivalent.}
    \label{fig:typeb}
\end{figure}
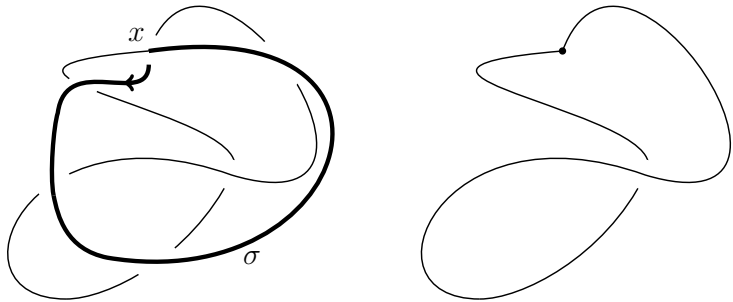

We adopt the viewpoint that a {\em strand} of a knot $K$ is a connected (in our context, it will always be proper) subset of $K$. If a diagram $D$ is obtained as a projection of $K$, then we refer to the part of $D$ that corresponds to a strand of $K$ also as a {\em strand} of $D$. We actually only use strands in diagrams, so there should be no confusion, whenever we work with a strand, as to whether we refer to a strand as a part of a diagram or as a piece of a knot. As an illustration, we refer the reader to Figures~\ref{fig:typeb} and~\ref{fig:typea}. In the former, we have a strand $\sigma$ whose endpoints coincide, in a crossing point, and in the latter we illustrate strands $\sigma,\sigma'$ with distinct endpoints.

The first macro move we use is illustrated in Figure~\ref{fig:typeb}. Let $x$ be a crossing point in a diagram $D$, and let $\sigma$ be a proper strand that starts and ends at $x$. Further suppose that $\sigma$ has no self-crossings other than $x$. If as we traverse $\sigma$ starting from $x$ (in any of the two possible directions), every crossing we find is an overpass (respectively, underpass) for $\sigma$, then $\sigma$ is an {\em overstrand} (respectively, {\em understrand}) {\em based at} $x$.

Informally speaking, if $\sigma$ is an overstrand (respectively, understrand) of $D$ based at $x$, then $\sigma$ lies above (respectively, below) the rest of the diagram. In either case we can then collapse $\sigma$ to the point $x$, and the diagram $D'$ thus obtained is equivalent to $D$. We say that $D'$ is obtained from $D$ by a {\em macro move of Type A}. We note that the classical Reidemeister move of Type I is a particular instance of a macro move of Type A, namely in the case in which $x$ is the only crossing of $\sigma$.

The other macro move we use is illustrated in Figure~\ref{fig:typea}. Let $x,y$ be distinct non-crossing points in a diagram $D$. Suppose that $\sigma$ is a strand of $D$ with endpoints $x,y$, that does not cross itself, and has the property that as we traverse $\sigma$, all the crossings  we find are overpasses (respectively, underpasses) for $\sigma$. Then we say that $\sigma$ is an {\em overstrand} (respectively, {\em understrand}) of $D$. Let $D'$ be a diagram obtained from $D$ by removing $\sigma$, and replacing it by another strand also with endpoints $x,y$, that is an overstrand (respectively, understrand) in $D'$. Then $D$ and $D'$ are clearly equivalent diagrams. We say that $D'$ is obtained from $D$ by a {\em macro move of Type B.} 

\begin{figure}[htbp]
    \centering
        \scalebox{0.45}{\begin{tikzpicture}[line width=2pt]
\begin{scope}[shift={(0,0.5)}]
\draw (-3,4) ..controls (0,2) and (0,-2).. (3,-4);
\filldraw[white] (0,0) circle (12pt);
\draw (3,4) ..controls (0,2) and (0,-2).. (-3,-4);
\filldraw[white] (0.7,1.5) circle (10pt);
\filldraw[white] (-0.7,1.5) circle (10pt);
\filldraw[white] (0.95,-1.75) circle (10pt);
\filldraw[white] (-0.95,-1.75) circle (10pt);
\end{scope}
\filldraw (-4,2) circle (7pt) node[shift={(-0.8,0)}]{\Huge$x$};
\filldraw (4,2) circle (7pt) node[shift={(0.8,0)}]{\Huge$y$};
\draw (-4,2) .. controls (-4.2,2.7) and (-4.5,3.5) .. (-5,4.5);
\draw (4,2) .. controls (4.2,2.7) and (4.5,3.5) .. (5,4.5);
\draw[line width=5pt] (-4,2).. controls (-3,-2.5)and (3,-2.5).. (4,2);
\draw  (-3.7,-0.1) .. controls (-4.5,-1.) and (-4.4,-0.8) .. (-6.2,-3.5);
\draw  (3.7,-0.1) .. controls (4.5,-1) and (4.4,-0.8) .. (6.2,-3.5);
\draw  (-2.8,0.8) .. controls (-1,2.5) and (1,2.5) .. (2.8,0.8);
\draw (2.6,-1.2 ) node {\Huge$\sigma$};

\begin{scope}[shift={(17,0.5)}]
\draw (-3,4) ..controls (0,2) and (0,-2).. (3,-4);
\filldraw[white] (0,0) circle (12pt);
\draw (3,4) ..controls (0,2) and (0,-2).. (-3,-4);
\filldraw[white] (0.7,1.5) circle (10pt);
\filldraw[white] (-0.7,1.5) circle (10pt);
\filldraw[white] (1.6,2.8) circle (10pt);
\filldraw[white] (-1.6,2.8) circle (10pt);
\end{scope}
\begin{scope}[shift={(17,0)}]
\filldraw (-4,2) circle (7pt) node[shift={(-0.8,-0.5)}]{\Huge$x$};
\filldraw (4,2) circle (7pt) node[shift={(0.8,-0.5)}]{\Huge$y$};
\draw (-4,2) .. controls (-4.2,2.7) and (-4.5,3.5) .. (-5,4.5);
\draw (4,2) .. controls (4.2,2.7) and (4.5,3.5) .. (5,4.5);
\draw[line width=5pt] (-4,2).. controls (-3,3.95)and (3,3.95).. (4,2);
\draw  (-3.7,-0.1) .. controls (-4.5,-1.) and (-4.4,-0.8) .. (-6.2,-3.5);
\draw  (3.7,-0.1) .. controls (4.5,-1) and (4.4,-0.8) .. (6.2,-3.5);
\draw  (-3.7,-0.1) .. controls (-1,2.8) and (1,2.8) .. (3.7,-0.1);
\draw (0,4.1 ) node {\Huge$\sigma'$};
\end{scope}

\end{tikzpicture}}
    \caption{A macro move of Type B. The thick strand $\sigma$ with endpoints $x,y$ is an overstrand in the left-hand side diagram. We obtain the right-hand side diagram by replacing this overstrand with the overstrand $\sigma'$. These diagrams are thus equivalent.
}
    \label{fig:typea}
\end{figure}
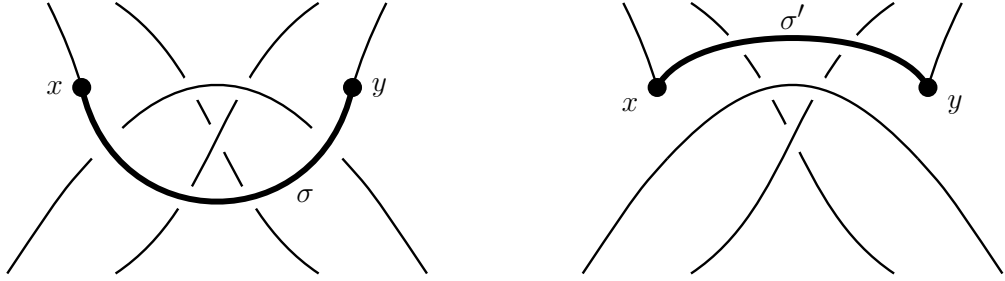

We note that the classical Reidemeister moves of Types II and III are particular instances of macro moves of Type B.

\section{Obtaining unknot diagrams using cycle decompositions}\label{sec:first}

We now formally lay out the first tool we develop to recursively construct unknot diagrams of a shadow. Let $S$ be a shadow, and suppose that $C$ is a straight-ahead cycle of $S$. Let $S'$ be the shadow obtained from $S$ by removing the edges of $C$, and suppressing any resulting degree $2$ vertices. We use the notation $S'=S\cy C$ to denote that $S'$ is obtained from $S$ in this way.

The first key claim is that if $S'=S\cy C$, then $|\uu(S)|\ge 2|\uu(S')|$, and that a stronger inequality holds if $C$ has more than one vertex:

\begin{proposition}\label{prop:w1}
Let $S$ be a shadow, and let $C$ be a straight-ahead cycle of $S$. Let $S'=S\cy C$. Then $|\uu(S)|\ge 2|\uu(S')|$. Moreover, if $C$ has more than one vertex, then $|\uu(S)|\ge 4|\uu(S')|$.
\end{proposition}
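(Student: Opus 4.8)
The plan is to construct, from each unknot diagram $D'$ of $S'$, either two or four unknot diagrams of $S$ (two in general, four when $C$ has at least two vertices), and then to argue that these assignments are genuinely distinct, so that the counts multiply. The starting point is the observation that $S$ is recovered from $S'$ by re-inserting the straight-ahead cycle $C$; since $C$ is straight-ahead, its edge set forms a single strand of the resulting diagram, and this strand passes through each of the remaining crossings exactly once. So an assignment on $S$ is specified by an assignment on $S'$ together with the over/under information at each crossing that $C$ meets.

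First I would set up the correspondence precisely. Let $D'$ be a diagram of $S'$. Re-insert the edges of $C$; at each vertex of $S$ that lies on $C$, we must choose a prescription. The key idea, as flagged in the introduction, is to make the strand corresponding to $C$ lie \emph{entirely above} the rest of the diagram, or \emph{entirely below}. Either choice determines all the new prescriptions, and yields a diagram $D$ of $S$ whose shadow-restriction to $S'$ (after collapsing or removing the $C$-strand via a macro move of Type A, using Section~\ref{sec:gen}) is exactly $D'$. Since $D'$ is unknot and macro moves preserve knot type, $D$ is unknot. This already gives at least $2$ unknot diagrams of $S$ from each $D'$, proving $|\uu(S)|\ge 2|\uu(S')|$ once distinctness is handled.

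Next, for the stronger bound when $C$ has at least two vertices: here the $C$-strand genuinely passes through (at least) two distinct crossings of $D$, rather than being a loop based at a single crossing. In this case I would observe that we may independently choose the behaviour of the $C$-strand ``above then below'' versus ``below then above'' along a split of $C$ into two arcs — more carefully, the relevant point is that with $C$ having more than one vertex, $C$ corresponds in $D$ to an overstrand (or understrand) between two \emph{distinct} points, and a macro move of Type B lets us reroute it; this gives two additional inequivalent-at-the-crossing choices. The concrete way to see $4$: pick a root vertex of $C$, split $C$'s traversal at a midpoint of one of its edges into a strand with two distinct endpoints; then the four combinations (strand over, with endpoints joined one way; strand over, joined the other way; and likewise under) all give diagrams reducing to $D'$ via Type A / Type B macro moves, hence all unknot. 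The four resulting assignments on $S$ differ at the crossings of $C$, so they are distinct diagrams of $S$.

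Finally, distinctness across different $D'$: two diagrams of $S$ arising from distinct $D_1', D_2' \in \uu(S')$ restrict (by deleting the $C$-edges and suppressing degree-$2$ vertices at the shadow level) to $D_1'$ and $D_2'$ respectively, hence are distinct; and for fixed $D'$ the two (resp. four) constructed diagrams differ in the prescriptions at vertices of $C$, hence are distinct. So the total count is at least $2|\uu(S')|$ (resp. $4|\uu(S')|$). I expect the main obstacle to be the bookkeeping in the ``four'' case: one must be careful that when $C$ has exactly two vertices the two ``above/below'' choices together with the two ``rerouting'' choices really do yield four \emph{distinct} assignments on $S$ and not fewer, and that in all cases the reduction back to $D'$ is a legitimate composition of macro moves (Type A when $C$ closes up at a single crossing, Type B otherwise) — this requires checking that the inserted $C$-strand has no self-crossings and that it lies uniformly above or below, which is exactly what "$C$ straight-ahead" guarantees, since distinct passes of a straight-ahead walk through a vertex use the two transverse directions. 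A secondary subtlety is the degenerate case where $S'$ is the trivial shadow, which should be handled as a base-case remark.
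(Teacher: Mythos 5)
Your factor-of-$2$ argument is essentially the paper's: extend each unknot diagram $D'$ of $S'$ by letting the strand corresponding to $C$ be an overstrand or an understrand, collapse it by a macro move of Type A to recover $D'$, and observe that extensions of distinct $D'$ are distinct because they restrict to distinct assignments on $S'$. The gap is in the factor of $4$. In the paper the extra binary choice comes from the root $c$ of $C$: since the straight-ahead traversal of $C$ starts and ends at $c$, the two edges of $C$ incident with $c$ are \emph{adjacent} rather than opposite in the rotation at $c$, so in the extended diagram the $C$-strand is a strand \emph{based at} the crossing $c$ in the sense of the Type A macro move. Collapsing such a strand only requires its crossings other than the basepoint to be all overpasses (or all underpasses); the prescription at $c$ itself plays no role in the collapse. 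This gives $2$ choices (over/under) at the vertices of $C$ other than $c$, times $2$ free choices at $c$, hence $4$ distinct assignments on $S$, each reducing to $D'$. (When $c$ is the only vertex of $C$ the first factor disappears, leaving the $2$ choices at $c$.) You never isolate this mechanism, and your claim that either global choice ``determines all the new prescriptions'' is in fact false at the root.

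None of the devices you offer for the fourth diagram works as stated. A macro move of Type B replaces a strand by a differently drawn strand, so it changes the shadow, not the assignment on $S$; it cannot manufacture additional diagrams \emph{of $S$}, which is what must be counted. Splitting $C$ at the midpoint of an edge produces endpoints that are not crossings, so ``joining them one way or the other'' is not a choice of prescription at any vertex and yields nothing to count. And ``one arc above, the other below'' is a genuinely different family of assignments whose reduction to $D'$ you would still have to justify (the over/under data at the points where the two arcs meet is not determined by either rule, and the reduction is no longer a single Type A collapse). As written, the proposal establishes $|\uu(S)|\ge 2|\uu(S')|$ but not the stronger inequality.
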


\begin{proof}
The main idea is that an unknot diagram of $S'$ can be ``extended'' to at least two unknot diagrams of $S$ (and to at least four, if $C$ has more than one vertex). Suppose first that $C$ has more than one vertex, and let $c$ be the root of $C$. We refer the reader to Figure~\ref{fig:4from1} for an illustration of the proof, where $C$ is the cycle formed by the thick edges. 


Starting from any unknot diagram $D'$ of $S'$, we obtain four distinct unknot diagrams of $S$ as follows. First, we let each vertex in $S$ that is in $S'$ maintain the prescription it has in $D'$. Then the prescriptions at the vertices in $S\setminus (S'\cup\{c\})$ are given so that in the resulting diagram of $S$, the strand corresponding to $C$ is either an overstrand or an understrand (two possibilities). Moreover, we still have the freedom to assign any of the two possible prescriptions at $c$. This yields in total four distinct diagrams of $S$. Each of these four diagrams is unknot: one can apply a macro move of Type A to the strand corresponding to $C$ (this is either an over- or an under-strand, so such a move can be performed), to obtain the unknot diagram $D'$. Thus these four diagrams are unknot. 

Finally, if $c$ is the only vertex of $C$, then we extend $D'$ to two unknot diagrams of $S$, by giving to $c$ each of the two possible prescriptions. 
\end{proof}

\begin{figure}[ht!]
\centering
\hglue -0.5 cm\scalebox{0.8}{\begin{tikzpicture}[, line cap=round, line join=round]

\begin{scope}[shift={(-8,1)}]

\draw(-0.1,0.05).. controls  (0,0) and (0,-0).. (0.1,-0.05);
\draw(0.1,0.05).. controls  (0,0.) and (0,-0).. (-0.1,-0.05);
\draw[black,shift={(0,2.15)}, line width=3](-0.1,0.05).. controls  (0,0.05) and (0,-0).. (0.1,-0.1);
\draw[black,shift={(0,2.15)}, line width=3](0.1,0.05).. controls  (0,0.05) and (0,0).. (-0.1,-0.1);
\filldraw[white] (0,2.25) circle (5pt);
\draw(0.1,2.2).. controls  (1,2.2) and (1.3,0.7).. (0.1,0.05);
\draw(-0.1,2.2).. controls  (-1,2.2) and (-1.3,0.7).. (-0.1,0.05);
\draw (0.1,-0.05) .. controls (1.5,-0.6) and (2.5,1.6).. (0,1.6);
\draw (-0.1,-0.05) .. controls (-1.5,-0.6) and (-2.5,1.6).. (-0,1.6);
\draw[black, line width=3] (0,-0.3).. controls  (1.3,-0.3) and (0.8,1.).. (0.1,2.05);
\draw[black, line width=3] (0,-0.3).. controls  (-1.3,-0.3) and (-0.8,1.).. (-0.1,2.05);
\filldraw[black] (0,2.16) circle (3pt);
%
\filldraw (0.38,1.57) circle (3pt);
\filldraw (-0.38,1.57) circle (3pt);
\filldraw (-.75,0.71) circle (3pt);
\filldraw (.75,0.71) circle (3pt);
\filldraw (.62,-0.15) circle (3pt);
\filldraw (-.62,-0.15) circle (3pt);
\filldraw (0,0) circle (3pt);
\filldraw (0.87,1.45) circle (3pt);
\filldraw (-0.87,1.45) circle (3pt);
\draw (0,-0.8) node {\Large$S$};
\end{scope}
\begin{scope}[shift={(-8,-3.5)}]
\draw(-0.1,0.05).. controls  (0,0) and (0,-0).. (0.1,-0.05);
\draw(0.1,0.05).. controls  (0,0.) and (0,-0).. (-0.1,-0.05);
\draw (-0.1,	2.2) .. controls (-0.05,2.21) and  (0.05,2.21) ..(0.1,2.2);
\draw(0.1,2.2).. controls  (1,2.2) and (1.3,0.7).. (0.1,0.05);
\draw(-0.1,2.2).. controls  (-1,2.2) and (-1.3,0.7).. (-0.1,0.05);
\draw (0.1,-0.05) .. controls (1.5,-0.6) and (2.5,1.6).. (0,1.6);
\draw (-0.1,-0.05) .. controls (-1.5,-0.6) and (-2.5,1.6).. (-0,1.6);
\filldraw (0,0) circle (3pt);
\filldraw (0.87,1.45) circle (3pt);
\filldraw (-0.87,1.45) circle (3pt);
\draw (0,-0.6) node {\Large$S'$};
\end{scope}
\begin{scope}[shift={(3,-3.5)}]
\draw(-0.1,0.05).. controls  (0,0) and (0,-0).. (0.1,-0.05);
\draw (-0.1,	2.2) .. controls (-0.05,2.21) and  (0.05,2.21) ..(0.1,2.2);
\draw(0.1,2.2).. controls  (1,2.2) and (1.3,0.7).. (0.1,0.05);
\draw(-0.1,2.2).. controls  (-1,2.2) and (-1.3,0.7).. (-0.1,0.05);
\filldraw[white] (0.87,1.45) circle (5pt);
\filldraw[white] (-0.87,1.45) circle (5pt);
\draw (0.1,-0.05) .. controls (1.5,-0.6) and (2.5,1.6).. (0,1.6);
\draw (-0.1,-0.05) .. controls (-1.5,-0.6) and (-2.5,1.6).. (-0,1.6);
\draw (0,-0.6) node {\Large$D'$};
\end{scope}
\begin{scope}[shift={(9,1)}]
\draw(-0.1,0.05).. controls  (0,0) and (0,-0).. (0.1,-0.05);
\draw(0.1,2.2).. controls  (1,2.2) and (1.3,0.7).. (0.1,0.05);
\draw(-0.1,2.2).. controls  (-1,2.2) and (-1.3,0.7).. (-0.1,0.05);
\filldraw[white] (0.87,1.45) circle (5pt);
\filldraw[white] (-0.87,1.45) circle (5pt);
black,shift={(0,-2)}](0.2,0.2).. controls  (0,0.1) and (0,0.1).. (-0.2,0.2);
\draw (0.1,-0.05) .. controls (1.5,-0.6) and (2.5,1.6).. (0,1.6);
\draw (-0.1,-0.05) .. controls (-1.5,-0.6) and (-2.5,1.6).. (-0,1.6);
\filldraw[white] (0.38,1.57) circle (5pt);
\filldraw[white] (-0.38,1.57) circle (5pt);
\filldraw[white] (-.75,0.71) circle (5pt);
\filldraw[white] (.75,0.71) circle (5pt);
\filldraw[white] (.62,-0.15) circle (6pt);
\filldraw[white] (-.62,-0.15) circle (6pt);
\draw[black, line width=3] (0,-0.3).. controls  (1.3,-0.3) and (0.8,1.).. (0.1,2.05);
\draw[black, line width=3] (0,-0.3).. controls  (-1.3,-0.3) and (-0.8,1.).. (-0.1,2.05);
\draw[black,shift={(0,2.15)}, line width=3](0.1,0.05).. controls  (0,0.05) and (0,0).. (-0.1,-0.1);
\draw (0,-0.8) node {\Large$D_4$};
\end{scope}
\begin{scope}[shift={(5,1)}]
\draw(-0.1,0.05).. controls  (0,0) and (0,-0).. (0.1,-0.05);
\draw(0.1,2.2).. controls  (1,2.2) and (1.3,0.7).. (0.1,0.05);
\draw(-0.1,2.2).. controls  (-1,2.2) and (-1.3,0.7).. (-0.1,0.05);
\filldraw[white] (0.87,1.45) circle (5pt);
\filldraw[white] (-0.87,1.45) circle (5pt);
\draw (0.1,-0.05) .. controls (1.5,-0.6) and (2.5,1.6).. (0,1.6);
\draw (-0.1,-0.05) .. controls (-1.5,-0.6) and (-2.5,1.6).. (-0,1.6);
\filldraw[white] (0.38,1.57) circle (5pt);
\filldraw[white] (-0.38,1.57) circle (5pt);
\filldraw[white] (-.75,0.71) circle (5pt);
\filldraw[white] (.75,0.71) circle (5pt);
\filldraw[white] (.62,-0.15) circle (5pt);
\filldraw[white] (-.62,-0.15) circle (5pt);
\draw[black, line width=3] (0,-0.3).. controls  (1.3,-0.3) and (0.8,1.).. (0.1,2.05);
\draw[black, line width=3] (0,-0.3).. controls  (-1.3,-0.3) and (-0.8,1.).. (-0.1,2.05);
\draw[black,shift={(0,2.15)}, line width=3](-0.1,0.05).. controls  (0,0.05) and (0,-0).. (0.1,-0.1);
\draw (0,-0.8) node {\Large$D_3$};
\end{scope}
\begin{scope}[shift={(1,1)}]
\draw[black, line width=3] (0,-0.3).. controls  (1.3,-0.3) and (0.8,1.).. (0.1,2.05);
\draw[black, line width=3] (0,-0.3).. controls  (-1.3,-0.3) and (-0.8,1.).. (-0.1,2.05);
\draw[black,shift={(0,2.15)}, line width=3](0.1,0.05).. controls  (0,0.05) and (0,0).. (-0.1,-0.1);

\filldraw[white] (0.38,1.57) circle (5pt);
\filldraw[white] (-0.38,1.57) circle (5pt);

\filldraw[white] (-.75,0.71) circle (5pt);
\filldraw[white] (.75,0.71) circle (5pt);
\filldraw[white] (.62,-0.15) circle (5pt);
\filldraw[white] (-.62,-0.15) circle (5pt);
\draw(-0.1,0.05).. controls  (0,0) and (0,-0).. (0.1,-0.05);
\draw(0.1,2.2).. controls  (1,2.2) and (1.3,0.7).. (0.1,0.05);
\draw(-0.1,2.2).. controls  (-1,2.2) and (-1.3,0.7).. (-0.1,0.05);
\filldraw[white] (0.87,1.45) circle (5pt);
\filldraw[white] (-0.87,1.45) circle (5pt);
black,shift={(0,-2)}](0.2,0.2).. controls  (0,0.1) and (0,0.1).. (-0.2,0.2);
\draw (0.1,-0.05) .. controls (1.5,-0.6) and (2.5,1.6).. (0,1.6);
\draw (-0.1,-0.05) .. controls (-1.5,-0.6) and (-2.5,1.6).. (-0,1.6);
\draw[black,shift={(0,2.15)}, line width=3](0.1,0.05).. controls  (0,0.05) and (0,0).. (-0.1,-0.1);
\draw (0,-0.8) node {\Large$D_2$};
\end{scope}
\begin{scope}[shift={(-3,1)}]
\draw[black, line width=3] (0,-0.3).. controls  (1.3,-0.3) and (0.8,1.).. (0.1,2.05);
\draw[black, line width=3] (0,-0.3).. controls  (-1.3,-0.3) and (-0.8,1.).. (-0.1,2.05);

\filldraw[white] (0.38,1.57) circle (5pt);
\filldraw[white] (-0.38,1.57) circle (5pt);
\filldraw[white] (-.75,0.71) circle (5pt);
\filldraw[white] (.75,0.71) circle (5pt);
\filldraw[white] (.62,-0.15) circle (5pt);
\filldraw[white] (-.62,-0.15) circle (5pt);
\draw(-0.1,0.05).. controls  (0,0) and (0,-0).. (0.1,-0.05);
\draw(0.1,2.2).. controls  (1,2.2) and (1.3,0.7).. (0.1,0.05);
\draw(-0.1,2.2).. controls  (-1,2.2) and (-1.3,0.7).. (-0.1,0.05);
\filldraw[white] (0.87,1.45) circle (5pt);
\filldraw[white] (-0.87,1.45) circle (5pt);
black,shift={(0,-2)}](0.2,0.2).. controls  (0,0.1) and (0,0.1).. (-0.2,0.2);
\draw (0.1,-0.05) .. controls (1.5,-0.6) and (2.5,1.6).. (0,1.6);
\draw (-0.1,-0.05) .. controls (-1.5,-0.6) and (-2.5,1.6).. (-0,1.6);
\draw[black,shift={(0,2.15)}, line width=3](-0.1,0.05).. controls  (0,0.05) and (0,-0).. (0.1,-0.1);
\draw (0,-0.8) node {$D_1$};
\end{scope}
\begin{scope}[gray,shift={(0,-0.4)}]
\draw(-3.5,-0.2) ..controls (-3,-1) and (2,-0.1) ..(3,-0.6);
\draw(9.5,-0.2) ..controls (9,-1) and (4,-0.1) ..(3,-0.6);
\end{scope}
\end{tikzpicture}}
\caption{Illustration of the proof of Proposition~\ref{prop:w1}.}
\label{fig:4from1}
\end{figure}
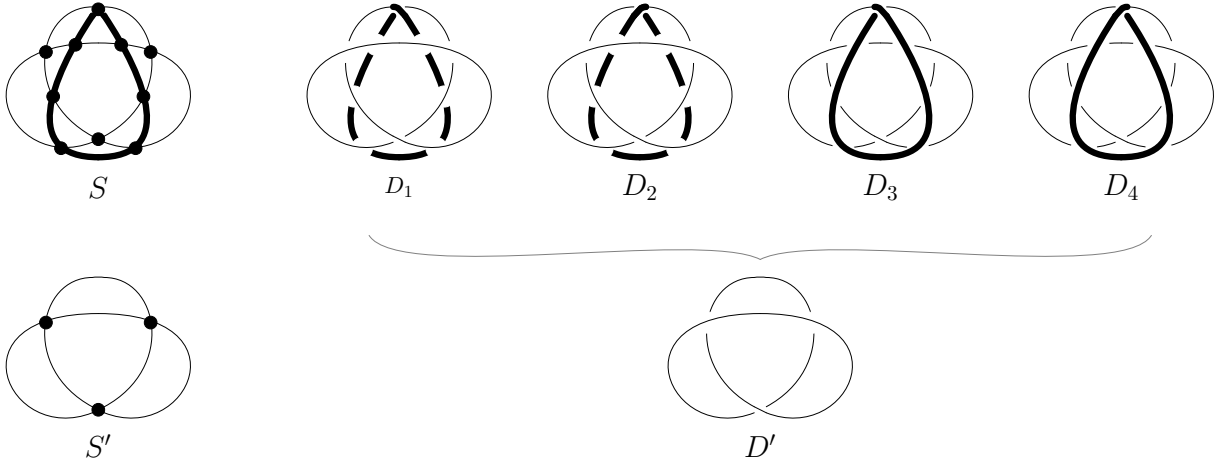

We wish to iteratively apply this proposition until we end up with a trivial shadow. The formal setting makes use of the following definition.

\bigskip
\noindent{\bf Definition. (Cycle decomposition).}
Let $S$ be a shadow. Suppose that $S=S_1,S_2,$ $\ldots,S_p$ is a sequence of shadows, with the following properties:
\begin{itemize}
\item For $i=1,2,\ldots,p-1$, there is a straight-ahead cycle $C_i$ of $S_i$ such that $S_{i+1}=S_i\cy C_i$.
\item $S_p$ is the trivial shadow.
\end{itemize}

Set $C_p:=S_p$. Then the sequence $C_1,C_2,\ldots,C_{p}$ is a {\em cycle decomposition} of $S$ {\em of size $p$}.
\bigskip

In Figure~\ref{fig:t1-1} we give an example of a cycle decomposition. We remark that in a cycle decomposition $C_1,C_2,\ldots,C_p$ of a shadow $S$, the cycles $C_2,\ldots,C_p$ are not necessarily straight-ahead cycles in $S$. Moreover, they do not even need be cycles of $S$. On the other hand, for $i=1,2,\ldots,p$ there is a cycle $D_i$ in $S$ such that $D_i$ is a subdivision of $C_i$. The cycles $D_1,D_2,\ldots,D_p$ are pairwise edge-disjoint, and $S=\cup_{i=1}^p D_i$. The sequence $D_1,D_2,\ldots,D_p$ is the {\em primary sequence} of $C_1,C_2,\ldots,C_p$.

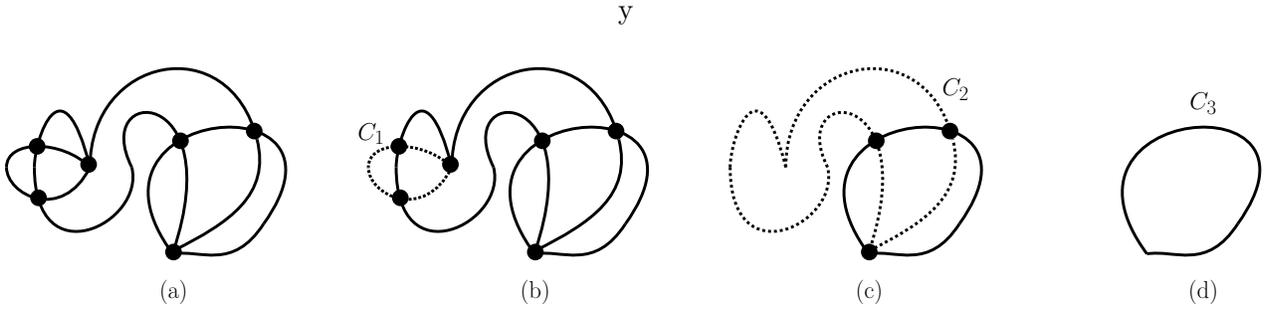
\begin{figure}[ht!]
    \centering
y        \scalebox{0.37}{ \begin{tikzpicture}[line width=3pt]

\begin{scope}[rotate=90,shift={(0,0)}]
\draw (0,3) .. controls (1,4)  and (1,6).. (0,6); 
\draw (0,3) .. controls (-2,4)  and (-1,6).. (0,6); 
\draw(0,3) .. controls (4,4)  and (1,5).. (0,5); 
\draw  (0,5) .. controls (-4,5)  and (-2,1).. (0,1.5); 
\draw  (0,1.5) .. controls (3,3)  and (3,-2).. (-3,-0); 
\draw (0,3) .. controls (4,3)  and (5,-2).. (1,-3); 
\draw(-3,0) .. controls (-2,-2)  and (-1,-3.5).. (1,-3); 
\draw [shift={(-0.8,-1.6)}, rotate=190] (2,-2) .. controls (-4,-5)  and (-3,6).. (1.5,1.4); 
\draw [shift={(-0.8,-1.6)}, rotate=190](2,-2) .. controls (2,-1)  and (3,0).. (1.5,1.4); 
\filldraw (0.1,3.05) circle (7pt);
\filldraw (-3.05,0) circle (7pt);
\end{scope}
\begin{scope}
\filldraw (2.9,1.3) circle (7pt);
\filldraw (0.25,0.95) circle (7pt);
\filldraw (-4.9,0.75) circle (7pt);
\filldraw (-4.85,-1.1) circle (7pt);
\draw (0,-4.5) node  {\Huge (a)};
\end{scope}
\begin{scope}[shift={(13,0)},rotate=90]
\draw[dotted] (0,3) .. controls (1,4)  and (1,6).. (0,6); 
\draw[dotted] (0,3) .. controls (-2,4)  and (-1,6).. (0,6); 
\draw(0,3) .. controls (4,4)  and (1,5).. (0,5); 
\draw  (0,5) .. controls (-4,5)  and (-2,1).. (0,1.5); 
\draw  (0,1.5) .. controls (3,3)  and (3,-2).. (-3,-0); 
\draw (0,3) .. controls (4,3)  and (5,-2).. (1,-3); 
\draw(-3,0) .. controls (-2,-2)  and (-1,-3.5).. (1,-3); 
\draw [shift={(-0.8,-1.6)}, rotate=190] (2,-2) .. controls (-4,-5)  and (-3,6).. (1.5,1.4); 
\draw [shift={(-0.8,-1.6)}, rotate=190](2,-2) .. controls (2,-1)  and (3,0).. (1.5,1.4); 
\filldraw (0.1,3.05) circle (7pt);
\filldraw (-3.05,0) circle (7pt);
\end{scope}
\begin{scope}[shift={(13,0)}]
\filldraw (2.9,1.3) circle (7pt);
\filldraw (0.25,0.95) circle (7pt);
\filldraw (-4.9,0.75) circle (7pt);
\filldraw (-4.85,-1.1) circle (7pt);
\draw (-5.9,1.2) node  {\Huge $C_1$};
\draw (0,-4.5) node  {\Huge (b)};

\end{scope}

\begin{scope}[shift={(25,0)},rotate=90]
\draw[dashed](0,3) .. controls (4,4)  and (1,5).. (0,5); 
\draw[dashed]  (0,5) .. controls (-4,5)  and (-2,1).. (0,1.5); 
\draw [dashed] (0,1.5) .. controls (3,3)  and (3,-2).. (-3,-0); 
\draw [dashed](0,3) .. controls (4,3)  and (5,-2).. (1,-3); 
\draw[dashed](-3,0) .. controls (-2,-2)  and (-1,-3.5).. (1,-3); 
\draw [shift={(-0.8,-1.6)}, rotate=190] (2,-2) .. controls (-4,-5)  and (-3,6).. (1.5,1.4); 
\draw [shift={(-0.8,-1.6)}, rotate=190](2,-2) .. controls (2,-1)  and (3,0).. (1.5,1.4); 
\filldraw (-3.05,0) circle (7pt);
\end{scope}
\begin{scope}[shift={(25,0)}]
\filldraw (2.9,1.3) circle (7pt);
\filldraw (0.25,0.95) circle (7pt);
\draw (3.1,2.8) node  {\Huge $C_2$};
\draw (0,-4.5) node  {\Huge (c)};
\end{scope}

\begin{scope}[shift={(35,0)},rotate=90]
\draw [shift={(-0.8,-1.6)}, rotate=190] (2,-2) .. controls (-4,-5)  and (-3,6).. (1.5,1.4); 
\draw [shift={(-0.8,-1.6)}, rotate=190](2,-2) .. controls (2,-1)  and (3,0).. (1.5,1.4); 
\end{scope}
\begin{scope}[shift={(37,0)}]
\draw (0,2.3) node  {\Huge $C_3$};
\draw (0,-4.5) node  {\Huge (d)};

\end{scope}

\end{tikzpicture}}
    \caption{A cycle decomposition of the shadow $S=S_1$ in (a). In (b) we highlight with dotted edges a straight-ahead cycle $C_1$ of $S_1$. After removing the edges of $C_1$, and suppressing the three resulting degree $2$ vertices, we reach the shadow $S_2=S_1\cy C_1$ in (c). The dotted cycle $C_2$ in (c) is a straight-ahead cycle of $S_2$, and in (d) we show the shadow $S_3=S_2\cy C_2$. We finally set $C_3=S_3$ (thus $C_3$ is a vertex-free cycle). The sequence $C_1,C_2,C_3$ is then a cycle decomposition of $S=S_1$.}
    \label{fig:t1-1}
\end{figure}


We can now give the first main ingredient in the proof of Theorem~\ref{thm:maintheorem}.

\begin{lemma}\label{lem:ifone}
Let $S$ be a shadow with $n$ vertices. Suppose that $S$ has a cycle decomposition of size at least ${\sqrt[3]{n}}$. Then $|\uu(S)| \ge 2^{\sqrt[3]{n}}$.
\end{lemma}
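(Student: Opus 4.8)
The plan is to derive Lemma~\ref{lem:ifone} as an almost immediate consequence of Proposition~\ref{prop:w1}, by induction along a cycle decomposition of $S$. Suppose $C_1, C_2, \ldots, C_p$ is a cycle decomposition of $S = S_1$ of size $p \ge \sqrt[3]{n}$, with $S_{i+1} = S_i \cy C_i$ and $S_p$ the trivial shadow. The trivial shadow has a unique diagram, the trivial (crossingless) diagram, which is unknot; hence $|\uu(S_p)| = 1 = 2^0$. We work backwards: Proposition~\ref{prop:w1} applied to $S_i$ and its straight-ahead cycle $C_i$ gives $|\uu(S_i)| \ge 2\,|\uu(S_{i+1})|$ for each $i = 1, \ldots, p-1$. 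Chaining these $p-1$ inequalities yields $|\uu(S)| = |\uu(S_1)| \ge 2^{p-1}\,|\uu(S_p)| = 2^{p-1}$.

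The only subtlety is that $2^{p-1}$ is off by one factor of $2$ from the target $2^p \ge 2^{\sqrt[3]{n}}$, so a little care is needed at one step. There are two natural ways to recover the missing factor. The cleanest is to observe that $S$ is a nontrivial shadow (if $S$ were trivial, it has $n=0$ vertices and $2^{\sqrt[3]{n}} = 1 = |\uu(S)|$ trivially), so by Fact~\ref{fac:fact2} it has at least two distinct straight-ahead cycles; since the very first cycle $C_1$ of a cycle decomposition is a straight-ahead cycle of $S_1 = S$, and every cycle decomposition must eventually remove every edge, one can argue that we may assume $C_1$ has more than one vertex --- or, more robustly, that at least one of the cycles $C_i$ in the decomposition has more than one vertex --- and then the stronger bound $|\uu(S_i)| \ge 4\,|\uu(S_{i+1})|$ from Proposition~\ref{prop:w1} applies at that step, promoting the chained product from $2^{p-1}$ to $2^p$. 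Alternatively, and more simply, if every $C_i$ is a single vertex then $S$ has exactly $p-1$ crossings, i.e. $n = p-1$, so $p = n+1 > \sqrt[3]{n}$ with room to spare, and $2^{p-1} = 2^n \ge 2^{\sqrt[3]{n}}$ already. Either way the bound $|\uu(S)| \ge 2^{\sqrt[3]{n}}$ follows.

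Assembling the write-up: I would first dispose of the trivial-shadow case, then set $|\uu(S_p)| = 1$, then state the telescoping application of Proposition~\ref{prop:w1}. I would split into the two cases ``some $C_i$ has more than one vertex'' and ``every $C_i$ is a single vertex,'' handling the former with the factor-$4$ clause of the proposition (getting $|\uu(S)| \ge 2^{p-2} \cdot 4 = 2^p \ge 2^{\sqrt[3]{n}}$, using $p \ge \sqrt[3]{n}$) and the latter with the observation $n = p - 1$ (so $|\uu(S)| \ge 2^{p-1} = 2^n \ge 2^{\sqrt[3]{n}}$). A cleaner phrasing avoiding the case split: since the cycles $D_1, \ldots, D_p$ in the primary sequence partition the $2n$ edges of $S$ and each $D_i$ with $C_i$ a single vertex is a $2$-edge cycle (a straight-ahead loop at one vertex contributes exactly $2$ edges... actually it contributes more in general), the bookkeeping is slightly delicate, so I would favor the explicit case split for clarity.

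The main obstacle here is genuinely minor: it is just making sure the arithmetic of the exponents lands on $2^{\sqrt[3]{n}}$ rather than $2^{\sqrt[3]{n}-1}$, i.e. correctly exploiting either the stronger ``more than one vertex'' clause of Proposition~\ref{prop:w1} or the crude bound on $n$ in the all-single-vertex case. No new ideas beyond Proposition~\ref{prop:w1}, Fact~\ref{fac:fact2}, and the definition of cycle decomposition are required; this lemma is the routine packaging of the first technique, and the real difficulty in the paper lies in guaranteeing that large shadows admit either a sufficiently long cycle decomposition or a usable digon-like structure (the second technique), which is deferred to later sections.
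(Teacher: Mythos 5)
Your proposal is correct and follows essentially the same route as the paper: chain Proposition~\ref{prop:w1} to get $|\uu(S)|\ge 2^{p-1}$, then recover the missing factor of $2$ by the same case split (some $C_i$ has more than one vertex, so the factor-$4$ clause applies; otherwise every step removes a one-vertex cycle, forcing $p=n+1$ and $2^{p-1}=2^n\ge 2^{\sqrt[3]{n}}$). The only difference is cosmetic: the paper phrases the second case as ``$p=n+1$'' rather than ``$n=p-1$,'' and does not detour through Fact~\ref{fac:fact2}.
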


\begin{proof}
Let $S$ be a shadow with $n$ vertices, and let $S_1,S_2,\ldots,S_p$ be a sequence of shadows obtained from a cycle decomposition of $S$, where $p\ge \sqrt[3]{n}$. A repeated application of Proposition~\ref{prop:w1} shows that $|\uu(S)|\ge 2^{p-1}|\uu(S_p)|$. Since $|\uu(S_p)|=1$ ($S_p$ is trivial), then $|\uu(S)| \ge 2^{p-1}$.

The bound $|\uu(S)|\ge 2^{p-1}$ is the best we can obtain from Proposition~\ref{prop:w1} only if at each step in the cycle decomposition, we remove a cycle that has only one vertex. Otherwise we can apply at least once the second part of Proposition~\ref{prop:w1}, and obtain that $|\uu(S)|\ge 2^p$. In such a case we are then done, since $p\ge \sqrt[3]{n}$.

Thus we are done unless, at each step in the cycle decomposition, we remove a cycle that has only one vertex. It is straightforward to see that this can happen only if $p=n+1$. In this case the bound $|\uu(S)|\ge 2^{p-1}$ reads $|\uu(S)|\ge 2^{n}$. Since $2^{n}> 2^{\sqrt[3]{n}}$ for every positive integer $n$, it follows that in this case we are also done.
\end{proof}

This lemma settles Theorem~\ref{thm:maintheorem} if $S$ has a cycle decomposition of the given size. If all cycle decompositions of $S$ are smaller, then we need a different approach. Before we introduce this second tool we establish some elementary results on digons in systems of two simple closed curves.

\section{Digons in systems of two closed curves}\label{sec:digons}

Lemma~\ref{lem:ifone} in the previous section settles Theorem~\ref{thm:maintheorem} for shadows with sufficiently large cycle decompositions. For a shadow $S$ that does not have this property, our approach will be to find two cycles $B,R$ in $S$ that have many vertices in common, and then to use these cycles to produce many unknot diagrams of $S$. This approach relies on the existence of digons (with special properties) in a system of two simple closed curves in the plane. In this section we investigate these systems of curves, and establish the required results.

Let $\beta$ be a simple closed curve in the plane. A {\em segment} of $\beta$ is a subset of $\beta$ that is homeomorphic to the closed interval $[0,1]$. Thus a segment of $\beta$ is simply a non-closed curve (including its endpoints) contained in $\beta$.

Let $\beta,\rho$ be two non-disjoint simple closed curves. A {\em digon of} $(\beta,\rho)$ is a pair of segments $(\alpha,\gamma)$, where $\alpha$ is in $\beta$ and $\gamma$ is in $\rho$,  such that (i) $\alpha$ and $\gamma$ have common endpoints $x,y$; (ii) the only points of $\beta$ that are in $\gamma$ are $x$ and $y$; and (iii) the only points of $\rho$ that are in $\alpha$ are $x$ and $y$. In Figure~\ref{fig:twoc1} we illustrate this concept.

\begin{figure}[ht!]
\centering
\scalebox{0.6}{\input{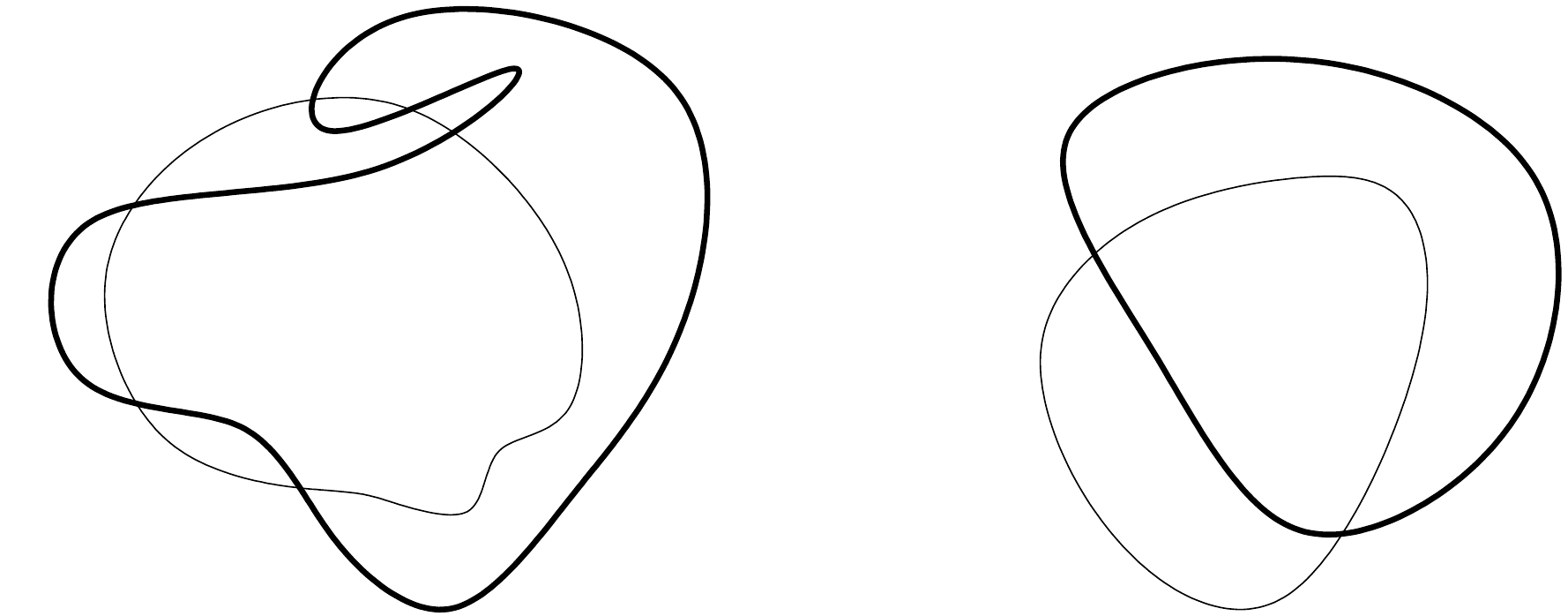_t}}
\caption{On the left hand side we have two simple closed curves $\beta$ (thick) and $\rho$ (thin). The pair $(\alpha,\gamma)$ is a digon of $(\beta,\rho)$. On the right hand side we have two simple closed curves $\beta=\alpha_1\cup\alpha_2$ (thick) and $\rho=\gamma_1\cup\gamma_2$ (thin). This last example illustrates that a digon does not necessarily bound an empty disk (that is, a disk whose interior does not contain any point in the curves). Indeed, the pair $(\alpha_2,\gamma_1)$ is a digon of $(\alpha_1\cup\alpha_2,\gamma_1\cup\gamma_2)$, and $\alpha_2\cup \gamma_1$ does not bound an empty disk. 
}
\label{fig:twoc1}
\end{figure}

If $(\alpha,\gamma)$ is a digon of $(\beta,\rho)$, then $\alpha\cup\gamma$ is a simple closed curve. Properties (ii) and (iii) above imply that at least one of the two connected components of $\real^2\setminus(\alpha\cup\gamma)$ does not have any point in $\beta\cup\rho$ (that is, at least one of these connected components is an {\em empty region}). We remark that it is not necessarily true that this empty region is the disk bounded by $\alpha\cup\gamma$ (see right hand side of Figure~\ref{fig:twoc1}).

We now state and prove the results on digons that we need later. An intersection of two curves at a point $x$ is a {\em crossing} if the curves intersect transversally at $x$. Otherwise the intersection is {\em tangential}.

\begin{proposition}\label{prop:dig1}
Let $\beta,\rho$ be non-disjoint simple closed curves in the plane. Suppose that every intersection point of $\beta$ and $\rho$ is a crossing (rather than tangential). Then there are at least $4$ distinct digons of $(\beta,\rho)$. Moreover, if $b$ is a point of $\beta$ that is not in $\rho$, and $r$ is a point of $\rho$ that is not in $\beta$, then there exists a digon $(\alpha,\gamma)$ of $(\beta,\rho)$ such that $\alpha\cup\gamma$ contains neither $b$ nor $r$.
\end{proposition}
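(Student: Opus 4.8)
The plan is to recast digons of $(\beta,\rho)$ as certain faces of the plane graph $G$ whose underlying point set is $\beta\cup\rho$, and then to count those faces by Euler's formula.

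First I would fix notation. Since every intersection point of $\beta$ and $\rho$ is a transversal crossing, the number $m$ of such points is even and at least $2$ (a closed curve crosses another closed curve an even number of times). These $m$ points are the vertices of $G$; as each of $\beta,\rho$ is a simple closed curve passing through all $m$ of them, each contributes $m$ arcs, so $G$ has $V=m$ vertices and $E=2m$ edges, and since the curves are non-disjoint $G$ is connected, whence by Euler's formula $G$ has $F=E-V+2=m+2$ faces. The structural point I would isolate is that \emph{every face of $G$ has an even number of sides}: at a transversal crossing the four edge-ends alternate between $\beta$ and $\rho$ in the rotation around the vertex, so as one traverses the closed boundary walk of a face the edge types alternate, forcing the walk to have even length. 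In particular a face with exactly $2$ sides is bounded by one arc of $\beta$ together with one arc of $\rho$ having the same two (distinct) endpoints; I will call these \emph{bigon faces}.

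Next I would set up a bijection between digons of $(\beta,\rho)$ and bigon faces of $G$. Given a digon $(\alpha,\gamma)$, conditions (ii)--(iii) guarantee (as already observed in the text) that one component of $\real^2\setminus(\alpha\cup\gamma)$ contains no point of $\beta\cup\rho$; a short argument shows this empty component is exactly a face of $G$, and it is a bigon face because its boundary is $\alpha\cup\gamma$. The map $(\alpha,\gamma)\mapsto F$ is injective, since $F$ determines its boundary $\alpha\cup\gamma$ and one recovers $\alpha=(\partial F)\cap\beta$ and $\gamma=(\partial F)\cap\rho$; it is surjective, since the boundary of a bigon face consists of one arc of $\beta$ and one arc of $\rho$ with common endpoints, which is a digon. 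A double count then yields the first assertion: writing $f_j$ for the number of faces with $j$ sides (so $f_j=0$ for odd $j$), we have $\sum_j f_j=m+2$ and $\sum_j jf_j=2E=4m$, hence $\sum_{j\ge 4}(j-2)f_j=2m-4$; since $j-2\ge 2$ for even $j\ge 4$, this gives $\sum_{j\ge 4}f_j\le m-2$, so the number of digons equals $f_2\ge (m+2)-(m-2)=4$.

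For the second assertion, observe that $b$ lies in the interior of a unique arc $\alpha_b$ of $\beta$ and $r$ in the interior of a unique arc $\gamma_r$ of $\rho$, so it suffices to produce a bigon face whose $\beta$-side is not $\alpha_b$ and whose $\rho$-side is not $\gamma_r$. If $m=2$, then all four faces are bigon faces, and the four digons are precisely the pairs consisting of one of the two arcs of $\beta$ and one of the two arcs of $\rho$; taking the $\beta$-arc distinct from $\alpha_b$ and the $\rho$-arc distinct from $\gamma_r$ gives the required digon. If $m\ge 4$, I claim each arc of $\beta$ borders at most one bigon face: if $\alpha_b$ bordered two, both faces incident to $\alpha_b$ would be bigons, forcing two distinct arcs of $\rho$ with the same pair of endpoints, which is impossible unless $\rho$ meets $\beta$ in only two points. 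Hence at most one bigon face has $\beta$-side $\alpha_b$ and, symmetrically, at most one has $\rho$-side $\gamma_r$; since there are at least four bigon faces, at least two avoid both, and either one gives the desired digon.

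The step I expect to demand the most care is the passage from a digon to a bigon \emph{face} — verifying that the empty region supplied by (ii)--(iii) is exactly one face of $G$ and that the correspondence is genuinely bijective — together with the sub-claim, used only when $m\ge 4$, that no arc of $\beta$ can border two bigon faces; the remainder is routine bookkeeping with Euler's formula.
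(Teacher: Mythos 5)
Your proof is correct, but it reaches the count of four digons by a genuinely different route than the paper. The paper restricts attention to the closed disk $\Delta$ bounded by $\beta$, views the portion of $\beta\cup\rho$ inside $\Delta$ as a $3$-regular plane graph, and notes that its dual (with the outer vertex deleted) is a tree; the two leaves of that tree give two digons whose $\rho$-segments lie inside $\Delta$, and the symmetric argument outside $\Delta$ gives two more. You instead work with the full $4$-regular plane graph $G=\beta\cup\rho$, identify digons with its faces of degree $2$, and extract $f_2\ge 4$ from Euler's formula combined with the parity observation that every face of $G$ has even degree. Both arguments are sound; yours makes the digon--face correspondence explicit (the paper's passage from tree leaves to digons is left to the reader) and merges the inside/outside cases into one global count, at the cost of having to verify carefully --- as you rightly flag --- that the empty region supplied by conditions (ii)--(iii) is precisely one face of $G$. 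That verification does go through: for $m\ge 4$ the complementary arcs $\beta\setminus\alpha$ and $\rho\setminus\gamma$ must meet and hence lie in a common component of the complement of $\alpha\cup\gamma$, and for $m=2$ the rotation of the four arc-ends at a transversal crossing forces them into a common component, so exactly one component is empty and it is a face. For the second assertion your argument and the paper's essentially coincide (at most one digon uses the $\beta$-arc through $b$ and at most one the $\rho$-arc through $r$, leaving at least two of the four that avoid both); you additionally supply the justification, via the ``two bigon faces flanking one arc would force $m=2$'' observation, for the distinctness claim that the paper asserts without proof.
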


\begin{proof}
To help comprehension, we colour $\beta$ blue and $\rho$ red. Let $\Delta$ denote the closed disk bounded by $\beta$. 

We can naturally regard the part of $\beta\cup\rho$ contained in $\Delta$ as a $3$-regular plane graph: each vertex is incident with two blue edges and one red edge. It is easily seen that if we take the dual of this plane graph, and discard the vertex corresponding to the unbounded face, the result is a tree. Now each leaf of this tree corresponds to a digon of $(\beta,\rho)$. Since each tree with at least two vertices has at least two leaves, then there are at least two digons of $(\beta,\rho)$ whose red segments are inside $\Delta$. A totally analogous argument shows that there are at least two digons of $(\beta,\rho)$ whose red segments are outside $\Delta$. This shows the existence of at least four distinct digons of $(\beta,\rho)$.

The existence of a digon $(\alpha,\gamma)$ of $(\beta,\rho)$ such that $\alpha\cup\gamma$ contains neither $b$ nor $r$ is easily worked out by a simple case analysis if $\beta$ and $\rho$ have exactly two intersection points. We note that, in this case, there is exactly one digon of $(\beta,\rho)$ with this property.

Suppose  $\beta$ and $\rho$ have at least $4$ points in common. Then if $(\alpha,\gamma)$ and $(\alpha',\gamma')$ are distinct digons, we must have $\alpha\neq\alpha'$ and $\gamma\neq\gamma'$. Thus only one digon of $(\beta,\rho)$ can contain $b$, and also only one can contain $r$. As there are at least four digons of $(\beta,\rho)$, it follows that there are at least two (and in particular at least one) digons $(\alpha,\gamma)$ of $(\beta,\rho)$ such that $\alpha\cup\gamma$ contains neither $b$ nor $r$. 
\end{proof}

\begin{proposition}\label{prop:dig2}
Let $\beta,\rho$ be simple closed curves in the plane that intersect in $\ell\ge 3$ points. Suppose that there is exactly one intersection point $z$ of $\beta$ and $\rho$ that is tangential; all the other intersection points are crossings. Then there exists a digon $(\alpha,\gamma)$ of $(\beta,\rho)$ such that $\alpha\cup\gamma$ does not contain $z$.
\end{proposition}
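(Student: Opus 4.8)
The idea is to reduce Proposition~\ref{prop:dig2} to Proposition~\ref{prop:dig1} by perturbing the curves slightly near the single tangential point $z$, so that the tangency is replaced either by a genuine crossing (or pair of crossings) or by no intersection at all, and then to track how the digons of the perturbed system relate to those of the original one.

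First I would fix a small disk $N$ around $z$ that contains no other intersection point of $\beta$ and $\rho$, and inside $N$ replace the arc of $\rho$ through $z$ by a pushed-off copy that misses the arc of $\beta$, leaving $\beta$ and $\rho$ unchanged outside $N$. Call the resulting curves $\beta,\rho'$. They are still simple closed curves; they intersect in exactly the $\ell-1\ge 2$ crossing points of $\beta\cap\rho$ other than $z$, and all of these are crossings. The key observation is that a digon of $(\beta,\rho')$ is automatically a digon of $(\beta,\rho)$: its endpoints are crossings common to both systems, and its two defining segments lie outside $N$ (because inside $N$ the curves no longer meet), so they are unchanged when we pass back from $\rho'$ to $\rho$, and conditions (i)--(iii) are preserved since we only added intersections (at $z$) when going from $\rho'$ to $\rho$, and $z\notin\alpha\cup\gamma$ by construction. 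Conversely I should check that a digon of $(\beta,\rho')$ does not contain $z$ — but $z$ is not even a point of $\beta\cap\rho'$, and more to the point the segment $\gamma\subseteq\rho'$ that realizes the digon, together with $\alpha\subseteq\beta$, bounds an empty region of the perturbed system; since the only change was localized in $N$ and $\gamma$ avoids $N$, the segment $\alpha$ must also avoid a neighborhood of $z$ provided we choose the perturbation so that the arc of $\beta$ through $z$ is, on one side, ``exposed'' to an empty region of $(\beta,\rho')$. This is the delicate point and I address it next.

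So I would choose the push-off direction carefully: push the $\rho$-arc to the side of $z$ so that the tangential contact ``opens up'' into the region of $\real^2\setminus(\beta\cup\rho)$ that locally has points of neither curve near $z$ on that side — such a side exists because at a tangential (non-crossing) intersection the two curves stay locally on the same side of each other, so one of the two local regions cut out by $\beta$ near $z$ contains no points of $\rho$ close to $z$. With this choice, near $z$ the curve $\beta$ bounds an empty region of the perturbed system $(\beta,\rho')$, hence that portion of $\beta$ near $z$ is not used by any digon of $(\beta,\rho')$ — because a digon's $\beta$-segment must have $\rho'$-points only at its two endpoints, and near $z$ the nearest $\rho'$-point is at distance bounded below, so if a digon's $\beta$-segment passed through $z$ it would have to have an endpoint (a $\rho'$-crossing) inside $N$, which contains none. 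Now apply Proposition~\ref{prop:dig1} to $(\beta,\rho')$ (all intersections are crossings, and there is at least one since $\ell-1\ge 2$): there are at least four digons of $(\beta,\rho')$. Take any one of them, say $(\alpha,\gamma)$; by the previous paragraph it is a digon of $(\beta,\rho)$, and $z\notin\alpha$ (shown just now) and $z\notin\gamma$ (as $\gamma\subseteq\rho'$ avoids $N\ni z$), so $z\notin\alpha\cup\gamma$, which is exactly the claim.

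**Main obstacle.** The only real subtlety is the step that guarantees the $\beta$-segment of the chosen perturbed digon avoids $z$ — equivalently, that the localized perturbation can always be performed on the ``empty'' side of the tangency so that $\beta$ near $z$ borders an empty region of $(\beta,\rho')$. This rests on the elementary but slightly fiddly fact that a tangential intersection of two simple arcs in the plane has, locally, a wedge region touched by only one of the two arcs; one must also make sure the perturbation does not accidentally create new intersection points or destroy the simplicity of $\rho'$, which is handled by taking $N$ small and the push-off $C^0$-small. I expect everything else (the back-and-forth identification of digons, and invoking Proposition~\ref{prop:dig1}) to be routine. If one prefers to avoid perturbation entirely, an alternative is to mimic the tree argument in the proof of Proposition~\ref{prop:dig1} directly: restrict to the disk $\Delta$ bounded by $\beta$, form the ``almost $3$-regular'' plane graph $(\beta\cup\rho)\cap\Delta$ which now has one degree-$4$ (or degree-$2$, depending on the side of the tangency) vertex at $z$, take the dual-minus-outer-face tree, and observe that it still has at least two leaves, at least one of which is not the face incident to $z$ — and similarly outside $\Delta$ — yielding a digon avoiding $z$; I would present the perturbation argument as the main line since it is cleaner, and only fall back on the direct combinatorial argument if the perturbation bookkeeping gets unwieldy.
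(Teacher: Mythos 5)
Your perturbation strategy is genuinely different from the paper's proof (which works directly with the planar graph formed by $\beta\cup\rho$ inside the disk bounded by $\beta$, on the side away from the tangency, and never perturbs anything), but as written it has a real gap at its central step. You claim that, after pushing $\rho$ off $\beta$ inside $N$, \emph{every} digon of $(\beta,\rho')$ has its $\beta$-segment avoiding $z$ and its $\rho'$-segment avoiding $N$. Neither claim is justified, and both are false in general. For the $\beta$-segment: the digon condition only forbids interior points of $\alpha$ lying on $\rho'$, and after the perturbation $z$ is no longer on $\rho'$, so nothing prevents $\alpha$ from running through $z$; your inference that ``if $\alpha$ passed through $z$ it would need an endpoint inside $N$'' is a non sequitur, since the endpoints of $\alpha$ are the two crossings $x,y$, which may lie far from $N$ while $\alpha$ traverses all of $\beta\cap N$. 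Concretely, let $\beta$ be a circle and let $\rho$ be tangent to it externally at the north pole $z$, crossing it transversally at the east and west points and returning through the interior. After the push-off, the pair (northern arc of $\beta$, outer arc of $\rho'$) is a digon of $(\beta,\rho')$ whose $\beta$-segment contains $z$ and whose $\rho'$-segment runs through $N$; it corresponds to no digon of $(\beta,\rho)$. The careful choice of push-off direction does not help: an empty region abutting $\beta$ near $z$ is perfectly compatible with the arc of $\beta$ through $z$ being a side of a digon of the perturbed system. So ``take any one of them'' fails.

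The gap is easy to close, and with the repair your route is a clean alternative to the paper's. Instead of taking an arbitrary digon of $(\beta,\rho')$, invoke the \emph{moreover} clause of Proposition~\ref{prop:dig1} with $b:=z$ (a point of $\beta$ not on $\rho'$) and $r:=$ any point of the pushed-off arc $\rho'\cap N$ (a point of $\rho'$ not on $\beta$). This yields a digon $(\alpha,\gamma)$ of $(\beta,\rho')$ with $z\notin\alpha$ and $r\notin\gamma$; since $\gamma$ is a connected arc of $\rho'$ whose endpoints are crossings lying outside $N$, and $\rho'\cap N$ is a single arc, the condition $r\notin\gamma$ forces $\gamma\cap N=\emptyset$, hence $\gamma\subseteq\rho$. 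One then checks conditions (i)--(iii) for $(\beta,\rho)$ directly: the only new points of $\rho$ that could appear on $\alpha$ when passing from $\rho'$ back to $\rho$ lie on $\rho\cap N$, which meets $\beta$ only at $z\notin\alpha$. Your fallback sketch (running the tree argument inside the disk bounded by $\beta$ and discarding the at most one digon whose blue side contains $z$) is essentially the paper's actual proof, so if you prefer not to patch the perturbation argument you can simply promote the fallback to the main line.
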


\begin{proof}
As in the proof of Proposition~\ref{prop:dig1}, we colour $\beta$ blue and $\rho$ red, and let $\Delta$ denote the closed disk bounded by $\beta$.

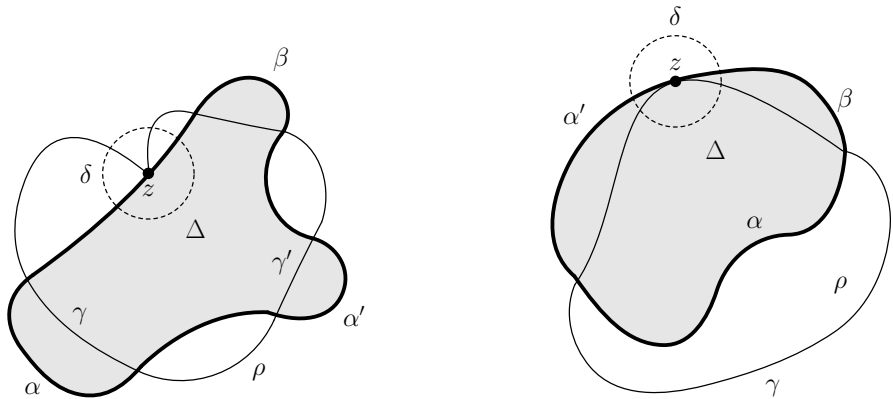
\begin{figure}[ht!]
\centering
\scalebox{0.48}{\begin{tikzpicture}[line width=1., line cap=round, line join=round]
\begin{scope}[fill=black!10,]
    \filldraw  (5.51, 8.03) .. controls (4.37, 6.21) and (2.74, 4.78) .. (1, 3.53) .. controls (0.13, 2.91) and (0.24, 2.10) .. 
          (0.68, 1.50) .. controls (1.12, 0.90) and (1.64, 0.31) .. 
          (2.37, 0.21) .. controls (2.91, 0.13) and (3.43, 0.33) ..  (4.0, 1.00) .. controls (4.98, 1.94) and (6.22, 2.55) .. (7.5, 2.5) .. controls (8.62, 2.11) and (9.40, 2.42) .. 
          (9.61, 3.12) .. controls (9.81, 3.75) and (9.38, 4.40) ..  (8.71, 4.56) .. controls (7.49, 4.86) and (7.05, 6.39) ..  (7.88, 7.43) .. controls (8.25, 7.90) and (8.05, 8.60) .. 
          (7.49, 8.88) .. controls (6.78, 9.24) and (5.97, 8.76) .. (5.51, 8.03) node [below=80]{\huge $\Delta$};
  \end{scope}
\begin{scope}[color=black, line width=3]
    \draw (5.51, 8.03) .. controls (4.37, 6.21) and (2.74, 4.78) .. (1.00, 3.53);
    \draw (1, 3.53) .. controls (0.13, 2.91) and (0.24, 2.10) .. 
          (0.68, 1.50) .. controls (1.12, 0.90) and (1.64, 0.31) .. 
          (2.37, 0.21) .. controls (2.91, 0.13) and (3.43, 0.33) .. (4, 1);
    \draw (4.0, 1.00) .. controls (4.98, 1.94) and (6.22, 2.55) .. (7.5, 2.5);
    \draw (7.5, 2.5) .. controls (8.62, 2.11) and (9.40, 2.42) .. 
          (9.61, 3.12) .. controls (9.81, 3.75) and (9.38, 4.40) .. (8.71, 4.56);
    \draw (8.71, 4.56) .. controls (7.49, 4.86) and (7.05, 6.39) .. (7.88, 7.43)node[above=45] {\huge $\beta$};
    \draw (7.88, 7.43) .. controls (8.25, 7.90) and (8.05, 8.60) .. 
          (7.49, 8.88) .. controls (6.78, 9.24) and (5.97, 8.76) .. (5.51, 8.03);
\node at (2.3,2.4) [black] {\huge $\gamma$} ;
\node at (1,0.4) [black] {\huge $\alpha$} ;
\node at (7.9,3.9) [black] {\huge $\gamma'$} ;
\node at (9.9,2.4) [black] {\huge $\alpha'$} ;
  \end{scope}
  \begin{scope}[color=black]
    \draw (9, 5) .. controls (9.29, 5.83) and (9.07, 7.13) .. (8.0, 7.5);
    \draw (8, 7.5) .. controls (7.03, 7.65) and (6.36, 7.82) .. (5.5, 8);
    \draw (5.5, 8.) .. controls (4.86, 8.20) and (3.95, 8.14) .. 
          (4.23, 6.3) .. controls (2.18, 8.02) and (1.30, 7.31) .. 
          (0.86, 6.34) .. controls (0.44, 5.40) and (0.34, 4.32) .. (0.84, 3.42);
    \draw (0.84, 3.42) .. controls (1.50, 2.21) and (2.70, 1.44) .. (3.94, 0.86) node[right=85] {\huge $\rho$};
    \draw (3.94, 0.86) .. controls (5.36, 0.21) and (7.03, 0.85) .. (7.68, 2.28);
    \draw (7.68, 2.28) .. controls (8.00, 2.98) and (8.31, 3.68) .. (9, 5);
\filldraw (4.2,6.35) [black] circle [radius=4pt]node [below=5] {\huge $z$} ;
\draw (4.2,6.35) [black,dashed] circle [radius=36pt]node[left=40] {\huge$\delta$} ;
  \end{scope}
 \begin{scope}[fill=black!10,shift={(15,0)}]
    \filldraw (8.5, 7.) .. controls (8.42, 7.74) and (8.05, 8.24) .. 
          (7.64, 8.67) .. controls (6.87, 9.47) and (5.55, 9.25) .. 
          (4.39, 9.06) .. controls (2.90, 8.81) and (1.56, 7.96) .. 
          (0.86, 6.63) .. controls (0.30, 5.56) and (0.12, 4.26) .. (1., 3.5) .. controls (1.77, 2.39) and (2.48, 1.61) ..    (3.46, 1.59) .. controls (4.22, 1.57) and (4.69, 2.32) .. 
          (4.94, 3.09) .. controls (5.22, 3.98) and (6.00, 4.63) .. 
          (6.93, 4.65) .. controls (7.90, 4.67) and (8.43, 5.73) .. (8.5, 7)node[left=90] {\huge $\Delta$};
  \end{scope}
  \begin{scope}[line width=3,color=black,shift={(15,0)}]
    \draw (8.5, 7.) .. controls (8.42, 7.74) and (8.05, 8.24) .. 
          (7.64, 8.67) .. controls (6.87, 9.47) and (5.55, 9.25) .. 
          (4.39, 9.06) .. controls (2.90, 8.81) and (1.56, 7.96) .. 
          (0.86, 6.63) .. controls (0.30, 5.56) and (0.12, 4.26) .. (1, 3.5);
    \draw (1., 3.5) .. controls (1.77, 2.39) and (2.48, 1.61) .. 
          (3.46, 1.59) .. controls (4.22, 1.57) and (4.69, 2.32) .. 
          (4.94, 3.09) .. controls (5.22, 3.98) and (6.00, 4.63) .. 
          (6.93, 4.65) .. controls (7.90, 4.67) and (8.43, 5.73) .. (8.5, 7)node[above=25] {\huge $\beta$};
\end{scope}
  \begin{scope}[color=black,shift={(15,0)}]
    \draw (1.01, 3.26) .. controls (0.69, 2.49) and (0.90, 1.61) .. 
          (1.44, 0.98) .. controls (2.19, 0.11) and (3.47, 0.18) .. 
          (4.61, 0.47) .. controls (5.88, 0.79) and (7.13, 1.22) .. 
          (8.21, 1.96) .. controls (9.14, 2.60) and (9.64, 3.67) .. 
          (9.74, 4.80) .. controls (9.83, 5.81) and (9.36, 6.81) .. (8.42, 6.98);
    \draw (8.42, 6.98) .. controls (1.33, 12.) and (3, 6.12) .. (1.01, 3.26)node[right=200] {\huge $\rho$};
\filldraw (3.8,8.9) [black] circle [radius=4pt]node [above=5] {\huge $z$} ;
\draw (3.8,8.9) [black,dashed] circle [radius=36pt]node [above=40]{\huge $\delta$} ;
\node at (6.5,0.4) [black] {\huge $\gamma$} ;
\node at (6,5) [black] {\huge $\alpha$} ;
\node at (1,8) [black] {\huge $\alpha'$} ;
  \end{scope}
\end{tikzpicture}}
\caption{{Illustration of the proof of Proposition~\ref{prop:dig2}. On the left hand side we have the case in which $\delta \cap \rho$ lies outside the disk $\Delta$ bounded by $\beta$. Both $\gamma$ and $\gamma'$ are segments inside $\Delta$, and  both $(\alpha,\gamma)$ and $(\alpha',\gamma')$ are digons, neither of which contains $z$. In the example on the right hand side, $\delta \cap \rho$ lies inside $\Delta$, and $\gamma$ is the only red segment that lies outside $\Delta$. By letting $\alpha$ and $\alpha'$ be the blue segments into which $\rho$ naturally breaks $\beta$, we have that $(\alpha,\gamma)$ and $(\alpha',\gamma)$ are both digons. Of these two digons, only $(\alpha,\gamma)$ does not contain $z$.}}
\label{fig:twoc2}
\end{figure}

Suppose that if we take a small disk $\delta$ centered at $z$ (small enough so that $z$ is the only point in $\beta\cap\rho$ that is in $\delta$), then $\rho\cap\delta$ lies outside $\Delta$ (see Figure~\ref{fig:twoc2}). The other possibility ($\rho\cap\delta$ lies inside $\Delta$) is handled in a totally analogous manner.

Now we proceed as in the proof of Proposition~\ref{prop:dig1}, ignoring the part of $\beta\cup\rho$ outside $\Delta$, and find at least two distinct digons $(\alpha,\gamma),(\alpha',\gamma')$ of $(\beta,\rho)$, such that $\gamma$ and $\gamma'$ (which are not necessarily distinct) are inside $\Delta$. Only one of $\alpha$ and $\alpha'$ can contain $z$. If $\alpha$ contains $z$, then $(\alpha',\gamma')$ is a digon of $(\beta,\rho)$ that does not contain $z$. If $\alpha'$ contains $z$, then $(\alpha,\gamma)$  is a digon of $(\beta,\rho)$ that does not contain $z$.
\end{proof}

\section{Obtaining unknot diagrams using digon-like structures}\label{sec:second}

When we have a shadow $S$ not having a sufficiently large cycle decomposition, and thus for which Lemma~\ref{lem:ifone} does not guarantee enough unknot diagrams, the idea is to find two straight-ahead cycles in $S$ with many common vertices, and then to use these two cycles to find many unknot diagrams of $S$. This approach is summarized in the following statement, our second main tool in the proof of Theorem~\ref{thm:maintheorem}.

\begin{lemma}\label{lem:dum}
Let $S$ be a shadow, and let $B$ and $R$ be distinct straight-ahead cycles of $S$. Suppose that $B$ and $R$ have exactly $m$ vertices in common. Then $|\uu(S)|\ge 2^{m/2}$. 
\end{lemma}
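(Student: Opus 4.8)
The strategy is to view $B$ and $R$ as two simple closed curves in the plane (the underlying curves of the two straight‑ahead cycles), apply the digon machinery from Section~\ref{sec:digons} to extract a digon‑like structure from their $m$ common vertices, and then use each such structure to ``split'' the task: show that every unknot diagram of a suitably reduced shadow extends to two unknot diagrams of the current shadow. Iterating this roughly $m/2$ times yields the factor $2^{m/2}$, in the same spirit as the recursion behind Lemma~\ref{lem:ifone}. The key point is that a digon $(\alpha,\gamma)$ of $(B,R)$ corresponds to two internally disjoint paths in $S$ with common endpoints, one lying along $B$ and one along $R$; this is a ``digon‑like structure'' in $S$ even when $S$ has no literal digon. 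Given such a pair of paths, one can reroute one of them by a macro move of Type B so that it runs parallel and immediately alongside the other, reducing $S$ to a smaller shadow $S''$; but because the rerouted strand may be placed entirely above or entirely below the rest of the diagram, each unknot diagram of $S''$ extends to at least two unknot diagrams of $S$.

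\smallskip
Here is the order of the steps. First I would make precise the correspondence: traversing $B$ between two consecutive common vertices gives a path $P_B$ in $S$, and likewise $P_R$ along $R$; if $(\alpha,\gamma)$ is a digon of the curve system, then $P_B$ and $P_R$ share only their endpoints, and the empty region guaranteed by the remark after the definition of a digon certifies that no other part of $S$ lies ``between'' them. Second, I would invoke Proposition~\ref{prop:dig1} (and, when tangencies at common vertices must be dealt with, Proposition~\ref{prop:dig2}) to guarantee such a digon exists, and moreover one that avoids a prescribed pair of points — the flexibility we need so that the reduction does not destroy the remaining common vertices. Third, I would perform the reduction $S \mapsto S''$: remove the edges of $P_B$, say, and reinsert a strand hugging $P_R$; show $S''$ is a shadow (it still admits a straight‑ahead Eulerian walk) with fewer common‑vertex pairs of the two inherited cycles. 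Fourth, the extension claim: given an unknot diagram $D''$ of $S''$, assign the prescriptions along the hugging strand so it is an overstrand (or understrand), and leave a free binary choice somewhere, so that a macro move of Type~B recovers $D''$; hence $D''$ lifts to $\ge 2$ unknot diagrams of $S$. Fifth, set up the induction on $m$: each digon‑elimination step kills at least one common vertex while multiplying the unknot count by at least $2$, and a careful accounting (a digon typically involves two common vertices as its endpoints) gives the exponent $m/2$ rather than $m$. The base case $m\le 1$ is trivial since $|\uu(S)|\ge 1$ always.

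\smallskip
The main obstacle I anticipate is the bookkeeping in the inductive step: ensuring that after eliminating one digon‑like structure, the two cycles $B,R$ (or their images) still have a controlled number of common vertices, and that the digons one finds in later stages still correspond to genuinely internally disjoint paths in the current shadow rather than paths that wind through already‑processed regions. This is exactly why the ``avoids a prescribed point'' clauses of Propositions~\ref{prop:dig1} and~\ref{prop:dig2} are stated, and why the earlier discussion stressed understanding the structure of digons in a two‑curve system; the delicate case is when a common vertex is a \emph{tangential} intersection of the two curves (the two cycles touch but do not cross there), which is what Proposition~\ref{prop:dig2} is designed to handle. A secondary technical point is verifying that the rerouted strand can always be drawn in the empty region without creating new crossings with $S''$ and without self‑crossings, so that the macro move of Type~B genuinely applies; this should follow from the empty‑region property of digons but needs to be checked against the possibility that the empty region is not the disk bounded by $\alpha\cup\gamma$ (the phenomenon flagged on the right‑hand side of Figure~\ref{fig:twoc1}).
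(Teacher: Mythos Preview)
Your high-level strategy---find a digon of $(B,R)$, use it to reduce to a smaller shadow while gaining a factor of $2$, and induct---is the right one, and for one of the two cases it is essentially what the paper does. But there is a structural dichotomy you have not noticed, and in the harder case your plan breaks down at exactly the point you flagged as ``should follow from the empty-region property.''

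The missing observation is this: since $B$ and $R$ are straight-ahead cycles, a common vertex is a tangential intersection of the underlying curves if and only if it is the (unique) root of both cycles. By the Jordan curve theorem the number of transversal intersections is even, so either $m$ is odd and $B,R$ share their root (whence a straight-ahead traversal shows $S=B\cup R$), or $m$ is even and $S\neq B\cup R$. In the odd case there are no ``gray'' edges; the digon segments are literal edges of $S$, the empty region really is empty of all of $S$, and your reduction is exactly the paper's: split the two endpoints $u,v$, obtain $S'$ with cycles $B',R'$ having $m-2$ common vertices, and extend each unknot diagram of $S'$ to two of $S$ by a genuine Reidemeister~II move on the pair $(e,f)$.

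In the even case, however, your argument has a real gap. The ``empty region'' of a digon of $(B,R)$ is empty only of $B\cup R$, not of $S$: gray edges may run through it. Consequently your rerouted strand cannot be drawn without new crossings, and even if you instead just split $u,v$, a Reidemeister~II on the two strands between them is blocked by the gray edges sitting in the region. Your proposed fix---make the rerouted strand an overstrand and invoke a macro move---does not help, because you want to lift an \emph{arbitrary} unknot diagram $D''$ of the reduced shadow, and in such a $D''$ the relevant strand need not be an over- or understrand.

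The paper resolves this by a two-layer argument you are missing. First pass to the link shadow $\overline{S}$ obtained by deleting all gray edges and suppressing degree-$2$ vertices; there the digon argument (now via Proposition~\ref{prop:dig1}, avoiding the two root points) runs cleanly and produces at least $2^{m/2}$ unlink diagrams of $\overline{S}$, each reachable from the trivial unlink by Reidemeister~II moves that never touch the roots. Then extend each such unlink diagram to an unknot diagram of $S$ by decreeing that every blue/red strand passes over every gray strand and that the gray part is given a descending (unknot) assignment. The point is that with blue/red uniformly over gray, each Reidemeister~II in $\overline{S}$ becomes a macro move of Type~B in $S$, so gray edges in the digon region are harmless; after eliminating all blue--red crossings this way, the remaining blue and red strands are overstrands and collapse by macro moves of Type~A to the gray unknot. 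This layering---count in $\overline{S}$, then extend with a global over/under convention---is the idea your sketch does not contain.
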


Although the arguments in the proof of this lemma depend on the parity of $m$, the core idea is the same in both cases. We will give separately the proofs for the cases $m$ odd and $m$ even. Even though in principle it would have been possible to present a unified proof, we have decided against this possibility. First, it would only have resulted in a marginal saving of space. Second, dealing with the easier case ($m$ odd) first will allow us to present the main idea in the proof, without the additional complications involved in the case when $m$ is even.

In both cases, to help comprehension we colour the edges of $B$ and $R$ blue and red, respectively.

\begin{proof}[Proof of Lemma~\ref{lem:dum} for odd $m$]
We start by noting that since is $m$ odd, then $S=B\cup R$. Indeed, since $B$ and $R$ are straight-ahead cycles, it follows that if we regard them as simple closed curves, then they can have at most one non-crossing (that is, at most one tangential) intersection. An elementary argument using the Jordan curve theorem shows that the number of crossing intersections is even. Since $m$ is odd, then these curves must have exactly one tangential intersection, which must then be the common root $r$ of $B$ and $R$. If we start a straight-ahead traversal of $S$ starting at $r$, we traverse entirely one of $B$ and $R$, return to $r$, and then traverse the other cycle, coming back again to $r$ having traversed the whole shadow $S$. Thus $S=B\cup R$, as claimed.

The proof is by induction on $m$. The base case $m=1$ is trivial, as in this case the lemma only requires two unknot diagrams, and it is immediately seen that a shadow with one vertex has (exactly) two unknot diagrams.

Suppose then that $m\ge 3$. It follows from Proposition~\ref{prop:dig2} that there exist edges $e,f$ with common endpoints $u,v$, where $e$ is blue and $f$ is red, such that $e\cup f$ is a digon. Moreover, it follows from that proposition that such edges $e,f$ exist so that neither $u$ nor $v$ is $r$.

Let $S'$ be the shadow obtained from $S$ by ``splitting'' each of $u$ and $v$ into two degree $2$ vertices, and then suppressing these four degree $2$ vertices, as illustrated in Figure~\ref{fig:valid1}. It is easy to see that the resulting graph $S'$ is indeed a shadow, with straight-ahead cycles $B',R'$ naturally induced from $B$ and $R$, whose common root is still $r$, and such that $S'=B'\cup R'$.

\begin{figure}[ht!]
\centering
\scalebox{01.0}{ \begin{tikzpicture}
\begin{scope}[line width =2,shift={(-6,0)}]
\draw(-1.5,-1) .. controls (-1.5,1) and (1.5,1) .. (1.5,-1);
\end{scope}
\begin{scope}[,shift={(-6,0)}]
\draw(-1.5,1) .. controls (-1.5,-1) and (1.5,-1) .. (1.5,1);
 \filldraw[black] (-1.15,0) circle (3pt);
 \filldraw[black] (1.15,0) circle (3pt);
\draw[black](-1.8,0)  node { \Large{$u$}};
\draw[black](1.8,0)  node { {\Large$v$}};
\draw[black](0,0.9)  node { {\Large$e$}};
\draw[black](0,-1)  node {{\Large$f$}};
\draw[](-1.5,-1.5)  node {\Large$B$};
\draw[](-1.5,1.5)  node {{\Large$R$}};
\end{scope}

\begin{scope}[,shift={(0,0)}]
\draw[line width=2](-1.28,0.3) .. controls (-1.0,0.8) and (1.0,0.8) .. (1.28,0.3);
\draw[](-1.28,0.3) .. controls (-1.29,0.3) and (-1.70,0.4) .. (-1.7,1);
\draw[](1.28,0.3) .. controls (1.29,0.3) and (1.70,0.4) .. (1.7,1);
\end{scope}
\begin{scope}[,shift={(0,0)}]
\draw(-1.28,-0.3) .. controls (-1.0,-0.8) and (1.0,-0.8) .. (1.28,-0.3);
\draw[line width=2](-1.28,-0.3) .. controls (-1.29,-0.3) and (-1.70,-0.4) .. (-1.7,-1);
\draw[line width=2](1.28,-0.3) .. controls (1.29,-0.3) and (1.70,-0.4) .. (1.7,-1);
 \filldraw[black] (-1.27,.3) circle (3pt);
 \filldraw[black] (-1.27,-.3) circle (3pt);
 \filldraw[black] (1.27,0.3) circle (3pt);
 \filldraw[black] (1.27,-0.3) circle (3pt);
\draw[black](-1.8,0.3)  node {  {\Large$u$}};
\draw[black](-1.8,-0.3)  node {  {\Large$u'$}};
\draw[black](1.9,0.3)  node {  {\Large$v$}};
\draw[black](1.9,-0.3)  node {  {\Large$v'$}};
\draw[black](0,0.9)  node {  {\Large$e$}};
\draw[black](0,-1)  node {  {\Large$f$}};
\end{scope}

\begin{scope}[,shift={(6,0)}]
\draw(-1.28,0.3) .. controls (-1.0,0.8) and (1.0,0.8) .. (1.28,0.3);
\draw(-1.28,0.3) .. controls (-1.29,0.3) and (-1.70,0.4) .. (-1.7,1);
\draw(1.28,0.3) .. controls (1.29,0.3) and (1.70,0.4) .. (1.7,1);
\end{scope}
\begin{scope}[line width=2,shift={(6,0)}]
\draw(-1.3,-0.3) .. controls (-1.0,-0.8) and (1.0,-0.8) .. (1.3,-0.3);
\draw(-1.28,-0.3) .. controls (-1.29,-0.3) and (-1.70,-0.4) .. (-1.7,-1);
\draw(1.28,-0.3) .. controls (1.29,-0.3) and (1.70,-0.4) .. (1.7,-1);
\draw[](-1.5,-1.5)  node {  {\Large$B'$}};
\draw[](-1.5,1.5)  node {  {\Large$R'$}};
\end{scope}

%
%

\end{tikzpicture}}
\caption{ Obtaining $S'$ from $S$ by splitting two vertices. The relevant part of the blue cycle $B$ (respectively, red cycle $R$) is drawn with thick (respectively, thin) edges. The blue edge $e$ and the red edge $f$ have common endpoints $u,v$. We split each of $u$ and $v$ into two degree $2$ vertices, and then suppress these four vertices. The resulting shadow $S'$ also has the property that it is the union of two straight-ahead cycles. We can naturally label these cycles $B'$ and $R'$, letting $B'$ (respectively, $R'$) be the cycle that contains the remains of the red edge $f$ (respectively, the blue edge $e$).
}
\label{fig:valid1}
\end{figure}
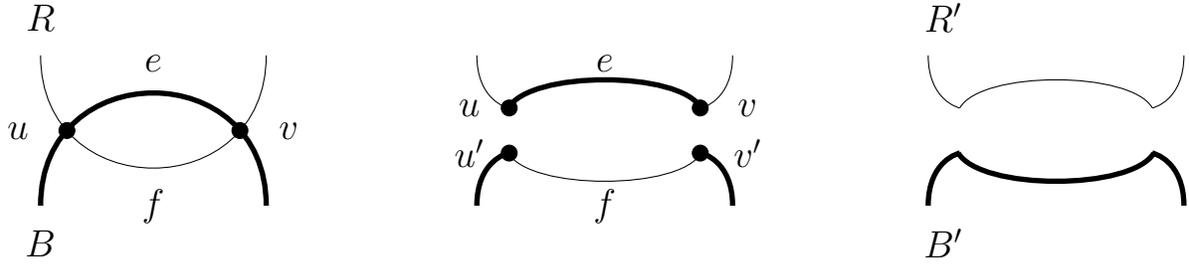

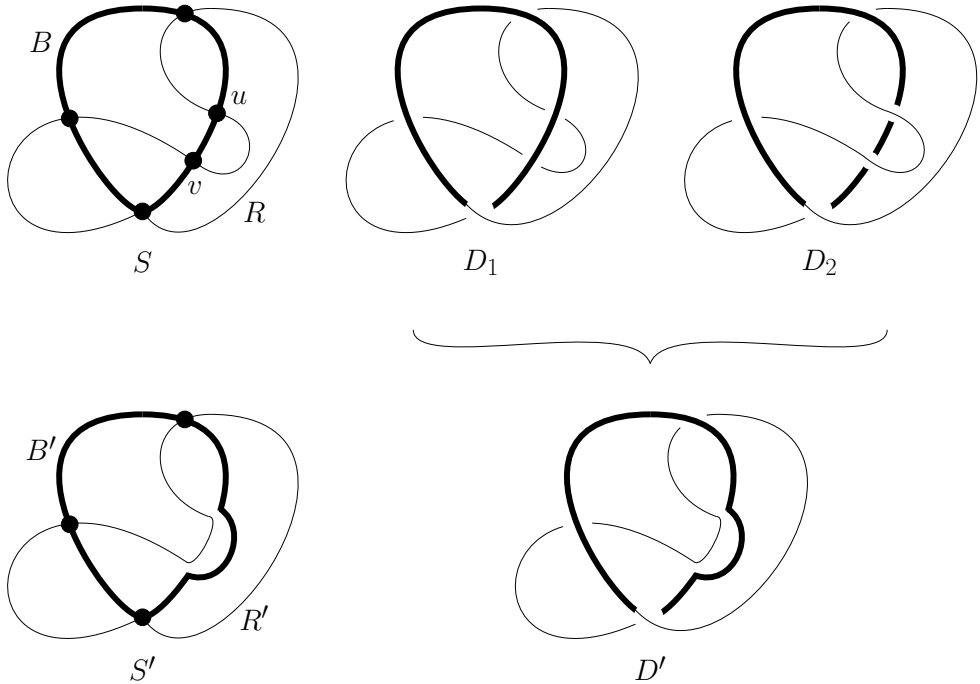
\begin{figure}[ht!]
\centering
\scalebox{0.45}{ \begin{tikzpicture}[thick]

\begin{scope}[shift={(0,10)}]
\draw[line width=5] (0,0) .. controls (1,0)  and (5,6).. (0,6); 
\draw[line width=5] (0,0) .. controls (-1,0)  and (-5,6).. (0,6); 
\filldraw (0,0) circle (7pt);
\filldraw (1.5,1.5) circle (7pt);
\filldraw (2.2,2.9) circle (7pt);
\draw (1.55,0.7) node {\Huge$v$};
\draw (2.85,3.35) node {\Huge$u$};
\draw (0,0) .. controls (-6,-3)  and (-5,6).. (1.5,1.5); 
\draw (1.5,1.5) .. controls (3,0.2)  and (4,2.5).. (2.2,2.9); 
\draw (2.2,2.9) .. controls (0,3.5)  and (0,6).. (2,6); 
\draw (2,6) .. controls (8,6)  and (2,-3).. (0,0); 
\draw (0,-1.5) node  {\Huge$S$};
\filldraw[black] (-2.15,2.75) circle (7pt); 
\filldraw[black] (1.25,5.85) circle (7pt); 
\draw (-3,5) node  {\Huge$B$};
\draw (3.3,0) node  {\Huge$R$};
\end{scope}

\begin{scope}[shift={(10,10)}]
\draw (0,0) .. controls (-6,-3)  and (-5,6).. (1.5,1.5); 
\draw (2.2,2.9) .. controls (0,3.5)  and (0,6).. (2,6); 
\filldraw[white] (-2.15,2.75) circle (12pt); 
\filldraw[white] (1.25,5.85) circle (12pt); 
\draw (1.28,1.65) .. controls (3,0.2)  and (4,2.3).. (1.95,2.98); 
\filldraw[white] (1.5,1.5) circle (9pt);
\filldraw[white] (2.2,2.9) circle (9pt);
\draw[line width=5] (0,0) .. controls (1,0)  and (5,6).. (0,6); 
\draw[line width=5] (0,0) .. controls (-1,0)  and (-5,6).. (0,6); 
\filldraw[white] (-0.05,0) circle (12pt);
\draw (2,6) .. controls (8,6)  and (2.3,-2.8).. (-0.47,0.25);
\draw (0,-1.5) node  {\Huge$D_1$};
\end{scope}

\begin{scope}[shift={(20,10)}]
\draw (0,0) .. controls (-6,-3)  and (-5,6).. (1.5,1.5); 
\draw (2.2,2.9) .. controls (0,3.5)  and (0,6).. (2,6); 
\filldraw[white] (-2.15,2.75) circle (12pt); 
\filldraw[white] (1.25,5.85) circle (12pt); 
\draw[line width=5] (0,0) .. controls (1,0)  and (5,6).. (0,6); 
\draw[line width=5] (0,0) .. controls (-1,0)  and (-5,6).. (0,6); 

\filldraw[white] (1.5,1.5) circle (7pt);
\filldraw[white] (2.2,2.9) circle (7pt);
\filldraw[white] (-0.05,0) circle (12pt);
\draw (1.28,1.65) .. controls (3,0.2)  and (4,2.3).. (1.95,2.98); 
\draw (2,6) .. controls (8,6)  and (2.3,-2.8).. (-0.47,0.25); 
\draw (0,-1.5) node  {\Huge$D_2$};
\end{scope}

%

\begin{scope}[shift={(0,-2)}]
\draw (0,0) .. controls (-6,-3)  and (-5,6).. (1.5,1.5); 
\draw (2.2,2.9) .. controls (0,3.5)  and (0,6).. (2,6); 
\filldraw[black] (-2.15,2.75) circle (7pt); 
\filldraw[black] (1.25,5.85) circle (7pt); 
\draw[line width =5] (0,0) .. controls (1,0)  and (5,6).. (0,6); 
\draw[line width =5] (0,0) .. controls (-1,0)  and (-5,6).. (0,6); 
\filldraw (0,0) circle (7pt);
\draw (2,6) .. controls (8,6)  and (2,-3).. (0,0); 
\filldraw[white] (1.5,1.5) circle (7pt);
\filldraw[white] (2.2,2.9) circle (7pt);
\filldraw[white] (1.5,1.5) rectangle  (2.2,2.9) circle;
\draw (1.28,1.65) .. controls (1.58,1.35)  and (2.4,2.98).. (1.95,2.98); 
\draw[line width =5pt] (1.3,1.25) .. controls (2.5,0.8)  and (3.2,2.58).. (2.3,3.2); 
\draw (0,-1.5) node  {\Huge$S'$};
\draw (-3,5) node  {\Huge$B'$};
\draw (3.3,0) node  {\Huge$R'$};

\end{scope}


\begin{scope}[shift={(15,-2)}]
\draw (0,0) .. controls (-6,-3)  and (-5,6).. (1.5,1.5); 
\draw (2.2,2.9) .. controls (0,3.5)  and (0,6).. (2,6); 
\filldraw[white] (-2.15,2.75) circle (12pt); 
\filldraw[white] (1.25,5.85) circle (12pt); 
\draw[line width =5] (0,0) .. controls (1,0)  and (5,6).. (0,6); 
\draw[line width =5] (0,0) .. controls (-1,0)  and (-5,6).. (0,6); 
\filldraw (0,0) circle (3pt);
\filldraw[white] (-0.05,0) circle (12pt);
\draw (2,6) .. controls (8,6)  and (2.3,-2.8).. (-0.47,0.25); 
\filldraw[white] (1.5,1.5) circle (7pt);
\filldraw[white] (2.2,2.9) circle (7pt);
\filldraw[white] (1.5,1.5) rectangle  (2.2,2.9) circle;
\draw (1.28,1.65) .. controls (1.58,1.35)  and (2.4,2.98).. (1.95,2.98); 
\draw[line width =5pt] (1.3,1.25) .. controls (2.5,0.8)  and (3.2,2.58).. (2.3,3.2); 
\draw (0,-1.5) node  {\Huge$D'$};
\end{scope}

\begin{scope}[shift={(15,0)}]
\draw (0,5.5) .. controls (-0.5,7) and (-7,5.2)..  (-7,6.5);
\draw (0,5.5) .. controls (0.5,7) and (7,5.2)..  (7,6.5);
\end{scope}

\end{tikzpicture}}
\caption{The shadow $S'$ is obtained from $S$ by splitting $u$ and $v$ as in Figure~\ref{fig:valid1}. After performing this operation, the straight-ahead cycles $B$ and $R$ of $S$ naturally induce straight-ahead cycles $B'$ and $R'$ of $S'$. Any unknot diagram of $S'$ can be extended to two unknot diagrams of $S$. For instance, the illustrated diagram $D'$ of $S'$ can be extended to the two unknot diagrams $D_1,D_2$ shown. In the digon with endcrossings $u,v$ in $D_1$, the strand corresponding to $B$ is an overstrand, and in $D_2$ it is an understrand. In either case, a Reidemeister move of Type II can be used to transform the diagram into $D'$. Since $D'$ is unknot, then $D_1$ and $D_2$ are also unknot.}
\label{fig:2from1}
\end{figure}

The key claim is that every unknot diagram of $S'$ can be extended to two unknot diagrams of $S$. See Figure~\ref{fig:2from1} for an illustration. For let $D'$ be an unknot diagram of $S'$. We construct from $D'$ two diagrams $D_1, D_2$ as follows. For each vertex in $S$ that is in $S'$, we maintain the prescription it has in $D'$. We obtain $D_1$ (respectively, $D_2$) by giving prescriptions to $u$ and $v$ such that the corresponding crossings are overpasses for the blue (respectively, red) strand.

Both $D_1$ and $D_2$ are unknot diagrams of $S$. This follows simply since, by applying a Reidemeister move of Type II to the strands corresponding to $e$ and $f$, we obtain the unknot diagram $D'$. Thus $D_1$ and $D_2$ are unknot.

This argument shows that $S$ has as least twice as many unknot diagrams as $S'$, and so the lemma follows by induction.
\end{proof}

\begin{proof}[Proof of Lemma~\ref{lem:dum} for even $m$]
A similar reasoning as the one at the beginning of the proof of Lemma~\ref{lem:dum} shows that in this case necessarily $S\neq B\cup R$, and that the respective roots $b$ and $r$ of $B$ and $R$ must be distinct. Since $S\neq B\cup R$, then $S$ has edges that are neither blue nor red; we colour these edges gray.

Let $\oS$ denote the graph that results by removing from $S$ all edges not in $B\cup R$, and supressing all resulting degree $2$ vertices. Thus $\oS$ is the shadow of a link with two components: $\oS$ has exactly two straight-ahead cycles, namely the cycles $\oB$ and $\oR$ naturally inherited from $B$ and $R$.

Note that the roots $b$ and $r$ are not anymore vertices in $\oS$, but we can still regard them as middle points $\obb$ and $\orr$ of edges of $\oB$ and $\oR$, respectively. We say that an unlink diagram of $\oS$ is {\em ($\obb,\orr$)}-{\em avoiding} if it can be taken to the trivial unlink diagram by a series of Reidemeister moves of Type II, none of which involves moving or passing above/below neither $\obb$ nor $\orr$. 

It is easy to see that arguments totally analogous to those in the proof above for the case $m$ odd (but in the present case using Proposition~\ref{prop:dig1} instead of Proposition~\ref{prop:dig2}) yields that $\oS$ has at least $2^{m/2}$ distinct $(\obb,\orr)$-avoiding unlink diagrams.

The key point that we will now show is that each of these $2^{m/2}$ $(\obb,\orr)$-avoiding unlink diagram of $\oS$ can be extended to an unknot diagram of $S$. Given an $(\obb,\orr)$-avoiding unlink diagram $\oD$ of $\oS$, we extend it to a diagram $D$ of $S$ as follows:

\begin{description}
\item{(i)} For each crossing of $D$ that is in $\oD$, we let it maintain in $D$ the prescription it has in $\oD$.

\item{(ii)} If a crossing in $D$ involves a blue (respectively, red) strand and a gray strand, then we let it be an overpass for the blue (respectively, red) strand. Loosely speaking, the blue and red strands are ``above'' the gray strands.

\item{(iii)} For the vertices that are incident with four gray edges (if any) we proceed as follows. Let $S'$ be the shadow that results by removing the edges of $B\cup R$, and supressing all resulting degree $2$ vertices. We give the prescription on the vertices of $S'$ by asking that the resulting assignment on $S'$ is an unknot diagram (take for instance any descending diagram of $S'$).

\item{(iv)} We give arbitrary prescriptions to the crossings corresponding to $b$ and $r$.
\end{description}

These rules give prescriptions to all the vertices in $S$. We claim that the resulting diagram $D$ of $S$ is unknot.

To prove that $D$ is unknot, we start by arguing that the blue-red crossings can be eliminated by using macro moves of Type B. For consider a sequence of $(\obb,\orr)$-avoiding Reidemeister Type II moves that unlink $\oD$. Since the blue and red strands are ``above'' the gray strands (by condition (ii)),  each of this moves can be translated to a macro move of Type B on $D$; this is the step in which it is essential that these moves on $\oD$ are $(\obb,\orr)$-avoiding.

Thus we can eliminate from $D$ all blue-red crossings by a sequence of macro moves of Type B. After this, the remaining blue and red strands are still overstrands (again by condition (ii)), and so they can be collapsed to $\obb$ and $\orr$, respectively, by using macro moves of Type A. Since by (iii) the resulting diagram is unknot, it follows that $D$ is an unknot diagram, as claimed.

Thus each $(\obb,\orr)$-avoiding unlink diagram of $\oS$ can be extended to an unknot diagram of $S$. Since these extensions are all distinct to each other, and the number of $(\obb,\orr)$-avoiding unlink diagrams is at least $2^{m/2}$, it follows that $|\uu(S)|\ge 2^{m/2}$.
\end{proof}

\section{Proof of Theorem~\ref{thm:maintheorem}}\label{sec:proofmain}

Lemmas~\ref{lem:ifone} and~\ref{lem:dum} take care of Theorem~\ref{thm:maintheorem} when $S$ is a shadow that either (i) has a cycle decomposition of size at least $\sqrt[3]{n}$; or (ii) has two straight-ahead cycles with at least $2\sqrt[3]{n}$ common vertices.

Unfortunately, there exist shadows that satisfy neither (i) nor (ii). However, we can show that if $S$ does not satisfy (i), then there is a substructure of $S$ that does satisfy (ii). To make this precise, let us introduce the concept of a subshadow. A shadow $T$ is a {\em subshadow} of $S$ if there is a sequence $S=S_1,S_2,\ldots,$ $S_r=T$ of shadows, such that for $i=1,2,\ldots, r-1$, there is a straight-ahead cycle $C_i$ of $S_i$ such that $S_{i+1}=S_i\cy C_i$. Note that a shadow is a subshadow of itself.

An easy but crucial fact that follows from Proposition~\ref{prop:w1} is that if $T$ is a subshadow of $S$, then $|\uu(S)|\ge |\uu(T)|$. Thus in order to apply Lemma~\ref{lem:dum} to obtain Theorem~\ref{thm:maintheorem}, we do not need the existence of two straight-ahead cycles with these properties in $S$ itself; it suffices to guarantee the existence of such straight-ahead cycles in some subshadow $T$ of $S$. This last ingredient is provided by the following statement.

\begin{lemma}\label{lem:exdu} Let $S$ be a shadow with $n$ vertices. Suppose that every cycle decomposition of $S$ has size at most $\sqrt[3]{n}$. Then there is a subshadow $T$ of $S$ that has straight-ahead cycles $B$ and $R$, such that $B$ and $R$ have at least $2\sqrt[3]{n}$ vertices in common.
\end{lemma}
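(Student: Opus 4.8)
The plan is to obtain the two cycles from a single pigeonhole over \emph{pairs} of cycles of a cycle decomposition, rather than from one large cycle versus "the rest". Fix any cycle decomposition $C_1,\dots,C_p$ of $S$; by hypothesis $p\le\sqrt[3]{n}$, and $p\ge 2$ since $S$ is nontrivial. (We may assume $n\ge 8$: for smaller $n$ the hypothesis is vacuous, because every cycle decomposition of a nontrivial shadow has at least two members while $\sqrt[3]{n}<2$.) Let $D_1,\dots,D_p$ be the primary sequence, so the $D_i$ are pairwise edge-disjoint cycles of $S$ with $\bigcup_i D_i=S$ and $D_i$ a subdivision of $C_i$. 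The key elementary observation is that, because $S$ is $4$-regular and each $D_i$ is a simple cycle — hence uses $0$ or exactly $2$ of the four edges at any given vertex — every vertex of $S$ lies on exactly two of $D_1,\dots,D_p$. Summing over all vertices yields the exact identity $\sum_{1\le j<k\le p}\bigl|V(D_j)\cap V(D_k)\bigr|=n$. Since there are only $\binom{p}{2}\le\binom{\sqrt[3]{n}}{2}\le\tfrac{1}{2}\,n^{2/3}$ terms, some pair $j<k$ satisfies $\bigl|V(D_j)\cap V(D_k)\bigr|\ge n/\binom{p}{2}\ge 2\sqrt[3]{n}$, and necessarily $j,k<p$ (as $D_p$ is vertex-free).

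It remains to realize this pair of cycles of $S$ as two straight-ahead cycles of a suitable subshadow while keeping all of their common vertices. I will take $T:=S_j$, the subshadow reached after the first $j-1$ steps of the decomposition. By construction $C_j$ is a straight-ahead cycle of $T$, and as a subgraph of $T$ it has vertex set $V(D_j)\cap V(T)$. For the second cycle I will use the easily checked fact that if $C$ is a straight-ahead cycle of a shadow $\widehat S$, then every straight-ahead cycle of $\widehat S\cy C$ lifts to a straight-ahead cycle of $\widehat S$: the vertices suppressed in forming $\widehat S\cy C$ are exactly those of $C$, and at such a vertex the two edges not on $C$ form an opposite pair (because $C$ is traversed straight-ahead), so the lifted cycle runs straight through it. Applying this along $S_k\to S_{k-1}\to\cdots\to S_j$ turns $C_k$ into a straight-ahead cycle $R$ of $T$ with vertex set $V(D_k)\cap V(T)$. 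Since $D_j$ and $D_k$ are edge-disjoint, so are $C_j$ and $R$, so they are distinct.

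Finally I will check that $V(D_j)\cap V(D_k)\subseteq V(T)$, so that $C_j$ and $R$ actually share \emph{all} of these vertices. Using the suppression rule $V(S_{i+1})=V(S_i)\setminus V(C_i)$, a vertex of $S$ disappears precisely at the first step that removes one of the two primary cycles through it; a vertex lying on both $D_j$ and $D_k$ therefore survives until step $\min(j,k)=j$, hence is still present in $S_j=T$. Consequently $\bigl|V(C_j)\cap V(R)\bigr|=\bigl|V(D_j)\cap V(D_k)\bigr|\ge 2\sqrt[3]{n}$, and $T$, $B:=C_j$, $R$ are as required. (In fact the same bookkeeping shows each $D_i$ is already a straight-ahead cycle of $S$ itself, so one could even take $T=S$.)

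The step I expect to demand the most care is the structural bookkeeping around the primary sequence: verifying that "straight-ahead cycle" is preserved under lifting through $\cy$, and that the combination of the suppression rule with the "exactly two primary cycles through every vertex" fact makes $V(C_j)\cap V(R)$ coincide with $V(D_j)\cap V(D_k)$ — not merely contain it or be contained in it. Everything else is routine: the pigeonhole itself, and the inequality $\binom{p}{2}\le\tfrac{1}{2}n^{2/3}$ for $p\le\sqrt[3]{n}$. The conceptual content is just the observation that the primary cycles of any cycle decomposition double-cover the vertex set of the shadow, so among at most $\sqrt[3]{n}$ of them, two must overlap in at least $2\sqrt[3]{n}$ vertices.
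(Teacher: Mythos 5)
Your pigeonhole step is exactly the paper's: the primary cycles $D_1,\dots,D_p$ double-cover $V(S)$, so with $p\le\sqrt[3]{n}$ some pair must meet in at least $2\sqrt[3]{n}$ vertices. The gap is in the second half. The ``easily checked fact'' that every straight-ahead cycle of $\widehat S\cy C$ lifts to a straight-ahead cycle of $\widehat S$ is false: your justification (``at such a vertex the two edges not on $C$ form an opposite pair because $C$ is traversed straight-ahead'') fails at the \emph{root} of $C$. A straight-ahead cycle of a nontrivial shadow has a unique root, and at the root the straight-ahead walk along $C$ begins and ends, so the two $C$-edges there are \emph{not} opposite in the rotation (if they were, the Eulerian straight-ahead walk of $\widehat S$ would cycle around $C$ forever and never reach the rest of the shadow). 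Hence the two non-$C$ edges at the root are also not opposite, and a cycle of $\widehat S\cy C$ whose lift passes through the root turns there instead of going straight. This is precisely the content of the paper's Proposition~\ref{pro:carito}, which guarantees only that \emph{at most one} straight-ahead cycle of $S\cy C$ fails to lift; and the remark following the definition of cycle decomposition states explicitly that the $D_i$ need not be straight-ahead cycles of $S$, so your parenthetical claim that one could take $T=S$ is also false.

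Because of this, the choice $T=S_j$ does not work in general: lifting $C_k$ back through $S_k\to\cdots\to S_j$ can break at any step where the partially lifted cycle passes through the root of the cycle being restored, and you have no freedom left to avoid this once $j$ and $k$ are fixed. This is the difficulty the paper's proof is built around: it takes $T$ to be a \emph{vertex-minimal} subshadow admitting a cycle decomposition whose primary sequence contains cycles subdivided by $D_r$ and $D_s$, and uses Proposition~\ref{pro:key} (any primary sequence contains at least two straight-ahead cycles) to show that if one of the two distinguished cycles were not straight-ahead in $T$, then some third straight-ahead cycle could be removed while preserving the two distinguished ones, contradicting minimality. Some argument of this kind is needed to repair your lifting step; as written, the proof does not go through. (A minor additional slip: $D_p$ is a subdivision of the vertex-free $C_p$ but is not itself vertex-free in $S$ --- its vertices are exactly its intersections with the other $D_i$ --- so you cannot conclude $j,k<p$; this, however, is harmless to the pigeonhole.)
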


Before moving on to the proof of this lemma, for completeness let us formally show that with this last ingredient we can finally prove Theorem~\ref{thm:maintheorem}.

\begin{proof}[Proof of Theorem~\ref{thm:maintheorem}]
If $S$ has a cycle decomposition of size at least $\sqrt[3]{n}$, then we are done by Lemma~\ref{lem:ifone}. If every cycle decomposition of $S$ has size at most $\sqrt[3]{n}$, then by Lemma~\ref{lem:exdu} it follows that $S$ has a subshadow $T$ with straight-ahead cycles $B$ and $R$ that have at least $2\sqrt[3]{n}$ common vertices. Applying Lemma~\ref{lem:dum} to $T$, we obtain $|\uu(T)|\ge 2^{\sqrt[3]{n}}$. Since $|\uu(S)|\ge |\uu(T)|$, the theorem follows.
\end{proof}

The proof of Lemma~\ref{lem:exdu} relies on the following two results on cycle decompositions.

\begin{proposition}\label{pro:carito}
Let $S$ be a nontrivial shadow, and let $C$ be a straight-ahead cycle of $S$. Then there is at most one straight-ahead cycle of $S\cy C$ that is not a subdivision of a straight-ahead cycle of $S$.
\end{proposition}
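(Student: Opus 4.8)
Write $r$ for the root of $C$. The natural strategy is to compare straight-ahead cycles of $S':=S\cy C$ with those of $S$ by \emph{lifting}: given a straight-ahead cycle $W'$ of $S'$, reinsert on each edge of $W'$ the degree-$2$ vertices of $C$ that were suppressed along that edge, obtaining a cycle $\wt W$ of $S$ that uses no edge of $C$ and of which $W'$ is obtained by suppression (so $W'$ is a subdivision of $\wt W$). The whole task is then to decide, for each $W'$, whether $\wt W$ is a straight-ahead cycle of $S$, and to show this fails for at most one $W'$.

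\textbf{Local analysis at the suppressed vertices.} The key observation is a dichotomy coming from Fact~\ref{fac:fact1} and the definition of the root. Let $v$ be a vertex of $C$ lying on $\wt W$. Its four edges in $S$ split into the two edges of $C$ and the two non-$C$ edges, and the latter pair is exactly the pair of edges of $\wt W$ at $v$. If $v\neq r$ then $v$ is an intermediate vertex of the straight-ahead walk tracing $C$, so the two $C$-edges at $v$ are opposite; hence the two non-$C$ edges at $v$ are opposite as well, i.e.\ $\wt W$ goes ``straight through'' $v$. If $v=r$, then by the very property that makes $r$ the root the two $C$-edges at $r$ are \emph{not} opposite, so the two non-$C$ edges at $r$ are not opposite either. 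Consequently: if $\wt W$ avoids $r$, then $\wt W$ turns only at the root of $W'$, so $\wt W$ is a straight-ahead cycle of $S$ and $W'$ is a subdivision of a straight-ahead cycle of $S$. Thus a straight-ahead cycle $W'$ of $S'$ can fail the conclusion only if its lift passes through $r$; equivalently, only if $W'$ contains the (unique) edge $e_r$ of $S'$ into which the two non-$C$ edges at $r$ were merged when $r$ was suppressed. So it remains to prove that \emph{at most one straight-ahead cycle of $S'$ contains $e_r$}.

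\textbf{The crux: uniqueness of the cycle through $e_r$.} Here I would use that $S'$, being a shadow, carries a straight-ahead Eulerian closed walk $\Sigma$ (Fact~\ref{fac:fact1}), which traverses each edge of $S'$ exactly once, and that $e_r$ records, inside $S$, a ``corner'' at the site of the removed root $r$. Suppose for contradiction that $W'_1,W'_2$ are distinct straight-ahead cycles of $S'$ both containing $e_r$; since distinct straight-ahead cycles have distinct roots, their roots $\rho_1\neq\rho_2$ are honest vertices of $S'$. Realising $W'_1,W'_2$ as simple closed sub-arcs of $\Sigma$, both contain the point of $e_r$ at which the corner sits, so they are ``intervals around'' that point. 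If one is contained in the other, then (reading their edge sets off $\Sigma$) their edge sets are nested, and since a cycle cannot be a proper subgraph of a cycle we get $W'_1=W'_2$, a contradiction. Otherwise the two arcs interleave, which forces the two visits to $\rho_1$ and the two visits to $\rho_2$ to alternate along $\Sigma$; I would then derive a contradiction from the planarity of the curve underlying $S'$ by a Jordan-curve argument applied to the two simple closed curves $W'_1$ and $W'_2$ (using that both, lifted to $S$, run straight through the little arc near $r$, so near $r$ they agree and cannot cross there). Assembling the pieces, every straight-ahead cycle of $S'$ not containing $e_r$ is a subdivision of a straight-ahead cycle of $S$, and at most one straight-ahead cycle of $S'$ contains $e_r$; this yields the statement.

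\textbf{Expected difficulty.} The bookkeeping in the local analysis is routine. The genuinely delicate part is the uniqueness claim for $e_r$: correctly formalising ``interval around the corner'' on the cyclic walk $\Sigma$, handling the degenerate cases (one of $\rho_1,\rho_2$ equal to an endpoint of $e_r$, or $e_r$ a loop), and turning the interleaving configuration into a contradiction with planarity without hand-waving. I would also double-check the boundary cases where $S'$ is trivial or where $C$ has a single vertex, since there the ``at most one'' is accounted for by the trivial cycle itself.
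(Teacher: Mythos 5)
Your strategy coincides with the paper's: lift each straight-ahead cycle $W'$ of $S'=S\cy C$ to a cycle of $S$, check that the lift runs straight through every suppressed vertex of $C$ except possibly the root $r$, and thereby reduce the proposition to the claim that at most one straight-ahead cycle of $S'$ contains the edge $e_r$ into which the two non-$C$ edges at $r$ are merged. Your local analysis is correct, and in fact more careful than the paper's, which simply asserts the corresponding uniqueness statement (``at most one element in $\ff$ can contain the root of $C$'') with no argument. So you have isolated the crux correctly.

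The gap is that your plan for the crux does not close, and the step you flag as delicate is the one that fails. In the interleaved case you hope to derive a contradiction from planarity, but the interleaved configuration genuinely occurs: two distinct straight-ahead cycles of a shadow can share edges, with their roots alternating along the Eulerian walk in exactly the pattern you want to exclude. Concretely, in the standard trefoil shadow with Eulerian straight-ahead circuit $u\,f_0\,v\,f_1\,w\,f_2\,u\,f_3\,v\,f_4\,w\,f_5\,u$, the triangles $\{f_0,f_1,f_2\}$ (rooted at $u$) and $\{f_1,f_2,f_3\}$ (rooted at $v$) are both straight-ahead cycles and share the edges $f_1$ and $f_2$; the two visits to $u$ and the two visits to $v$ alternate along the circuit, and the whole picture is planar, so no Jordan-curve contradiction is available. (This also shows that the parenthetical ``necessarily edge-disjoint'' in Fact~\ref{fac:fact2} does not hold for arbitrary pairs of distinct straight-ahead cycles, only for the two produced in that proof.) Worse, the uniqueness claim itself seems to fail: attach a small loop at an interior point $r$ of $f_2$, pairing each loop-end with one half of $f_2$ in the rotation at $r$ so that the result $S$ is still a knot shadow. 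Then the loop is a straight-ahead cycle $C$ of $S$ rooted at $r$, $S\cy C$ is the trefoil shadow, $e_r=f_2$, and the three straight-ahead cycles $\{f_0,f_1,f_2\}$, $\{f_1,f_2,f_3\}$, $\{f_2,f_3,f_4\}$ of $S\cy C$ all contain $e_r$ and all lift to cycles of $S$ that turn at two vertices, hence are not straight-ahead in $S$. So the missing step cannot be supplied by a cleverer planarity argument; the statement needs either an extra hypothesis or a restriction on which straight-ahead cycles are counted, and you should note that the paper's own one-line justification of this same step is equally unsupported.
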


\begin{proof}
Let $S':=S\cy C$, and let $\ff'$ be the set of all straight-ahead cycles of $S'$. Each $F'\in\ff'$ is obtained from a cycle $F$ in $S$, by suppressing from $F$ the vertices that are in $C$ (if any). Let $\ff$ denote the family of all such cycles $F$. It is readily seen that each $F\in\ff$ is a straight-ahead cycle in $S$, unless $F$ contains the root of $C$. Since at most one element in $\ff$ can contain the root of $C$, the proposition follows.
\end{proof}

\begin{proposition}\label{pro:key}
Let $S$ be a nontrivial shadow. Let $\cc=C_1,C_2,\ldots,C_p$ be a cycle decomposition of $S$, and let $D_1,D_2,\ldots,D_p$ be the primary sequence of $\cc$. Then at least two elements in $\{D_1,D_2,\ldots,D_p\}$ are straight-ahead cycles of $S$.
\end{proposition}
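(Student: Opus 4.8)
I plan to prove Proposition~\ref{pro:key} by induction on the size $p$ of the cycle decomposition $\cc=C_1,\dots,C_p$, where at the inductive step we peel off the first cycle $C_1$ and pass to $S'=S\cy C_1$. Note first that, since $S$ is nontrivial while $S_p$ is trivial, we have $p\ge 2$. In the base case $p=2$ the shadow decomposes as $S=D_1\cup D_2$, where $D_1=C_1$ is a straight-ahead cycle of $S$ by the very definition of a cycle decomposition, and $D_2$ is a subdivision of the trivial shadow $C_2=S_2$. Every vertex of $S$ lies on exactly two of the $D_i$'s (each $D_i$ is $2$-regular, so contributes degree $0$ or $2$ at any vertex, and these must sum to $4$), hence here on both $D_1$ and $D_2$; at such a vertex the two edges of $D_1$ are opposite because $D_1$ is straight-ahead, so the remaining two edges, which are the edges of $D_2$, are opposite as well. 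Thus $D_2$ is also a straight-ahead cycle of $S$, and the base case holds (with, in fact, both $D_1$ and $D_2$ qualifying).

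For the inductive step assume $p\ge 3$ and set $S'=S\cy C_1$. Then $C_2,\dots,C_p$ is a cycle decomposition of $S'$ of size $p-1\ge 2$, so $S'$ is a nontrivial shadow, and its primary sequence is a list of cycles $D_2',\dots,D_p'$ of $S'$. The essential bookkeeping step is to identify these with the ``restrictions'' of $D_2,\dots,D_p$: for each $i\ge 2$, $D_i'$ is obtained from $D_i$ by deleting the edges of $D_1$ and suppressing the resulting degree~$2$ vertices, and conversely $D_i$ is obtained from $D_i'$ by reinserting the vertices of $D_1$ lying on $D_i$ as subdivision points. This holds because each edge of $S'$ is a maximal path of $S\setminus E(D_1)$ through vertices of $D_1$, and at any vertex of $D_1$ exactly one cycle $D_k$ with $k\ge 2$ survives the deletion, so every such path lies entirely inside a single $D_i$; grouping the edges of $S'$ by which $D_i$ they come from recovers $D_2',\dots,D_p'$.

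The crux is then the following claim: for $i\ge 2$, $D_i'$ is a straight-ahead cycle of $S'$ if and only if $D_i$ is a straight-ahead cycle of $S$. Since $D_i$ is connected and $2$-regular it is traced by a unique closed walk, so being straight-ahead amounts to going straight (using opposite edges) at every vertex it passes through. Let $v$ be such a vertex; it lies on $D_i$ and on exactly one other cycle $D_k$. If $k=1$, then the two $D_1$-edges at $v$ are opposite (as $D_1$ is straight-ahead), hence so are the two $D_i$-edges, so $D_i$ automatically goes straight at $v$. If $k\ne 1$, then $v$ is not incident to $D_1$, so $v$ survives into $S'$, the cyclic order of the four edges around $v$ is unchanged by the suppression, and the two $D_i'$-edges at $v$ begin with the two $D_i$-edges at $v$; hence $D_i'$ goes straight at $v$ exactly when $D_i$ does. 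This proves the claim. Applying the induction hypothesis to $S'$ with the decomposition $C_2,\dots,C_p$, at least two of $D_2',\dots,D_p'$ are straight-ahead cycles of $S'$; by the claim, at least two of $D_2,\dots,D_p$ are straight-ahead cycles of $S$, which completes the induction.

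The step I expect to be the main obstacle is the bookkeeping underlying the middle paragraph together with the ``if and only if'' claim: one must carefully track how the planar ``opposite-edge'' relation, on which the notion of a straight-ahead cycle rests, behaves both when a degree~$2$ vertex is suppressed in forming $S'$ and, dually, when $D_i$ picks up the vertices of $D_1$ as new subdivision points. The key point that makes this work cleanly is precisely that $D_1=C_1$ is straight-ahead, which forces straightness of every other cycle at each point where it meets $D_1$; once this is isolated, everything reduces to the trivial observation that suppressing a degree~$2$ vertex does not alter the local rotation at any other vertex.
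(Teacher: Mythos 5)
Your overall strategy (induction on $p$, peeling off $C_1$ and passing to $S\cy C_1$) is the same as the paper's, but your key transfer claim is too strong and its justification contains a genuine error. You assert that at every vertex $v$ of $D_1=C_1$ ``the two $D_1$-edges at $v$ are opposite (as $D_1$ is straight-ahead).'' This fails at the root $r$ of $C_1$: a straight-ahead cycle of a nontrivial shadow is traversed by a walk that goes straight at every \emph{intermediate} vertex but necessarily \emph{turns} at its root (if the two $C_1$-edges at $r$ were opposite, $C_1$ would be a full straight-ahead component of $S$, contradicting the existence of a single straight-ahead Eulerian walk; this is also why the root is unique). Consequently, for the unique $D_k$ ($k\ge 2$) passing through $r$, the two $D_k$-edges at $r$ are \emph{not} opposite, so $D_k$ turns at $r$. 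Your ``if and only if'' then breaks in the direction from $S'$ back to $S$: $D_k'$ may well be a straight-ahead cycle of $S'$ (it no longer contains $r$), while $D_k$ turns both at $r$ and at the root of $D_k'$, hence is not a straight-ahead cycle of $S$. This is precisely the single exception isolated in Proposition~\ref{pro:carito}, which is the correct (weaker) form of your claim. The same oversight appears in your base case, where you conclude that $D_2$ is straight at every vertex; in fact $D_2$ turns at the root of $D_1$ (and \emph{must} turn somewhere), though it is still a straight-ahead cycle with that root, so the base case conclusion survives.

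The gap in the inductive step is not merely cosmetic as you have written it: the induction hypothesis hands you two straight-ahead cycles of $S'$, and one of them may be the exceptional $D_k'$ through $r$, so you can only guarantee that \emph{one} of $D_2,\ldots,D_p$ is a straight-ahead cycle of $S$ — yet your conclusion claims two among $D_2,\ldots,D_p$ and never uses $D_1$. The repair is exactly the paper's argument: invoke Proposition~\ref{pro:carito} to conclude that at most one of the two inherited cycles is lost in passing back to $S$, and then count $D_1=C_1$ itself (straight-ahead by the definition of a cycle decomposition) as the second required cycle.
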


\begin{proof}
We proceed by induction on $p$. The base case $p=2$ is straightforward. Thus we take a cycle decomposition $C_1,C_2,\ldots,C_p$ of a shadow $S$, with $p\ge 3$, and assume that the statement holds for all cycle decompositions (of any shadow) of size less than $p$.

Evidently, $C_1$ is a straight-ahead cycle of $S$. Let $S_2:=S\cy C_1$. Consider the cycle decomposition $\cc_2=C_2,C_3,\ldots,C_p$ of $S_2$, and let $E_2,E_3,\ldots,E_p$ be the associated primary sequence. By the induction hypothesis, there exist distinct $j,k\in\{2,\ldots,p\}$ such that $E_j,E_k$ are straight-ahead cycles of $S_2$.

By Proposition~\ref{pro:carito}, at most one of $E_j$ and $E_k$ is not a subdivision of a straight-ahead cycle of $S$. Without loss of generality, $E_j$ is a subdivision of a straight-ahead cycle of $S$. Now $D_j$ is a subdivision of $E_j$, and so it follows that $D_j$ is a straight-ahead cycle in $S$. This completes the proof, since $D_1$ and $D_j$ are straight-ahead cycles in $S$.
\end{proof}

\begin{proof}[Proof of Lemma~\ref{lem:exdu}]

Let $S$ be a shadow with $n$ vertices, and let $C_1,C_2,\ldots,C_p$ be a cycle decomposition of $S$, with $p\le\sqrt[3]{n}$. Let $D_1,D_2,\ldots,D_p$ be the primary sequence of $C_1,C_2,\ldots,C_p$. 

We start by noting that there exist $r,s\in\{1,2,\ldots,p\}$ such that $D_r$ and $D_s$ have at least $2\sqrt[3]{n}$ common vertices. To see this, note that each vertex in $S$ is in the intersection of exactly two cycles in $D_1,D_2,\ldots,D_p$. If each pair of cycles in this sequence had fewer than $2\sqrt[3]{n}$ common vertices, this would imply that $S$ has fewer than $\binom{p}{2}\cdot 2\sqrt[3]{n} \le \binom{\sqrt[3]{n}}{2}\cdot 2\sqrt[3]{n} = n - n^{2/3}$ vertices.

Now let $T$ be a subshadow of $S$ such that (i) $T$ has a cycle decomposition $E_1,E_2,\ldots,E_q$, with primary sequence $F_1,F_2,\ldots,F_q$ such that both $D_r$ and $D_s$ are subdivisions of cycles in $\{F_1,F_2,\ldots,F_q\}$; and (ii) no subshadow of $S$ with fewer vertices than $T$ satisfies (i). Note that $T$ is well defined, as $S$ itself satisfies (i), as witnessed by its cycle decomposition $C_1,C_2,\ldots,C_p$. We claim that $T$ satisfies the conditions in the lemma.

To show this, let $j,k$ be the integers in $\{1,2,\ldots,q\}$ such that $D_r$ is a subdivision of $F_j$, and $D_s$ is a subdivision of $F_k$. 

We proceed by contradiction. Suppose that one of $F_j$ and $F_k$ is not a straight-ahead cycle of $T$. Then, by Proposition~\ref{pro:key}, there is an $\ell\in\{1,2,\ldots,q\}$, $\ell\neq j,k$, such that $F_\ell$ is a straight-ahead cycle of $T$. Let $T':=T\cy F_\ell$. Then $E_1,E_2,\ldots,E_q$ naturally induces a cycle decomposition $E_1',E_2',\ldots,E_{\ell-1}', E_{\ell+1}',\ldots,E_q'$ of $T'$. Indeed,  for each $i\in\{1,2,\ldots,\ell-1,\ell+1,\ldots,q\}$ it suffices to let $E_i'$ be the cycle that results by suppressing from $E_i$ all the vertices that are in $E_\ell$, if any.

Let $F_1',F_2',\ldots,F_{\ell-1}', F_{\ell+1}', \ldots, F_q'$ be the primary sequence of $E_1',E_2',\ldots,E_{\ell-1}', E_{\ell+1}',\ldots,E_q'$. Then, for each $i\in\{1,2,\ldots,\ell-1,\ell+1,\ldots,q\}$ we have that $F_i$ is a subdivision of $F_i'$. In particular, $F_j$ is a subdivision of $F_j'$, and $F_k$ is a subdivision of $F_k'$. It follows that back in $S$, $D_r$ is a subdivision of $F_j'$, and $D_s$ is a subdivision of $F_k'$. Since this violates the minimality of $T$, we conclude that $F_j$ and $F_k$ are straight-ahead cycles of $T$. 
The lemma now follows by setting $B=F_j$ and $R=F_k$, since these two cycles have the same number of common vertices as $D_r$ and $D_s$, which is at least $2\sqrt[3]{n}$. 
\end{proof}


\section{Proofs of Theorem~\ref{thm:trefoil} and Observation~\ref{obs:figure8}}\label{sec:trefoil}

Let us start by recalling the concept of a cut-vertex in a graph. Following Tutte~\cite{tut}, a $1$-{\em separation} of a connected graph $G$ is an ordered pair $(H,K)$ of subgraphs of $G$, each with at least one edge, such that $H\cup K=G$ and $H\cap K$ is a graph that consists of a single vertex. The vertex of $H\cap K$ is the {\em cut-vertex} of the $1$-separation.

We remark that sometimes a cut-vertex of a connected graph is thought of as a vertex whose deletion produces a disconnected graph. We note that Tutte's notion of a cut-vertex is more general. Consider for instance vertex $v$ in the shadow at the top of Figure~\ref{fig:chorizo}. If we delete this vertex, together with its incident edges, the resulting graph is still connected, and yet $v$ is a cut-vertex according to Tutte's definition. Under Tutte's definition, if a graph $G$ can be obtained from two disjoint graphs, taking one vertex of each of these graphs and identifying these two vertices into a vertex $v$, then $v$ is a cut-vertex of $G$.

\subsection{Proof of Theorem~\ref{thm:trefoil}}\label{subsec:prooftrefoil}

We prove the two statements in Theorem~\ref{thm:trefoil} separately.

\bigskip
\noindent{\sl (I) Every diagram of $S$ is unknot if and only if every vertex of $S$ is a cut-vertex.}
\bigskip

For brevity, let us say that a shadow is {\em simple} if each of its vertices is a cut-vertex.

For the easier direction of (I), we note that if $D$ is a diagram of a simple shadow, then every crossing in $D$ is nugatory. We recall that a {\em nugatory} crossing in a diagram corresponds precisely to a cut-vertex in its shadow. This crossing can be trivialized by a switching operation of the diagram, thus obtaining an equivalent diagram with fewer crossings. If every crossing in a diagram is nugatory, then it is equivalent to the trivial diagram. Thus, if $S$ is simple, then all the diagrams of $S$ are unknot.

We prove the other direction of (I) by induction on the number of vertices of $S$. In the base case $S$ consists of exactly one vertex, and the statement is trivial. For the inductive step, we assume that the statement holds for all simple shadows with fewer than $n$ vertices, for some $n\ge 2$, and let $S$ be a simple shadow with $n$ vertices. 

First we note that if $S$ has no cut-vertex, then not all the diagrams of $S$ are unknot. To see this, let $D$ be an alternating diagram of $S$, that is, a diagram in which as one traverses the diagram, one encounters an overcrossing, then an undercrossing, then an overcrossing, and so on. Since $S$ has no cut vertices, then $D$ has no nugatory crossings, that is, $D$ is a {\em reduced} diagram. The validity~\cites{kauf1,muras1,this1} of the First Tait Conjecture (if $D$ is a reduced alternating diagram, then no diagram equivalent to $D$ has fewer crossings than $D$) then implies that the knot represented by $D$ cannot be the unknot.

We can then assume that $S$ has a cut-vertex $v$. Then there are subgraphs $H,K$ of $S$, both with at least one edge, such that $H\cup K=S$ and $H\cap K$ consists solely of $v$. Note that $v$ has degree $2$ in both $H$ and $K$. Let $H'$ (respectively, $K'$) be the graph obtained by suppressing $v$ in $H$ (respectively, $K$). Then $H'$ and $K'$ are shadows, each with fewer than $n$ vertices. 

We claim that both $H'$ and $K'$ are simple. For suppose that $H'$ is not simple, and let $D_H$ be a knotted diagram of $H'$. Let $D_K$ be an unknot diagram of $K'$. We can naturally combine $D_H$ and $D_K$ into a diagram $D$ of $S$, by giving any prescription to $v$. Since $D_K$ is unknot, it follows that $D$ and $D_H$ are equivalent, and so $D$ is a knotted diagram of $S$. Since this contradicts the assumption that $S$ is simple, we conclude that $H'$ must be simple. The same argument shows that $K'$ is simple.

The inductive hypothesis then implies that every vertex of $H'$ and $K'$ is a cut-vertex, and from this it immediately follows that every vertex of $S$ is a cut-vertex.  \hfill \qed

\bigskip
\noindent{(II) {\sl If not every vertex of $S$ is a cut-vertex, then $S$ has a trefoil diagram.}}
\bigskip

It is easy to see that if every straight-ahead cycle of a shadow has only one vertex (that is, it consists of a vertex and a loop-edge), then every vertex is a cut-vertex of the shadow. From (I) it then follows that if not all the vertices of a shadow are cut vertices, then the shadow has a straight-ahead cycle with more than one vertex. 


The proof is by induction on the number of vertices of $S$. A quick analysis shows that in a shadow with $1$ or $2$ vertices, all the vertices are cut vertices. It is also readily seen that up to isomorphism there is only one shadow with $3$ vertices, such that not all of its vertices are cut vertices. This is the shadow of the familiar diagram of the trefoil knot. This shadow has evidently a trefoil diagram, and so this settles the base case $n=3$. 

For the inductive step, we take a shadow $S$ on $n\ge 4$ vertices, where not all the vertices of $S$ are cut vertices, and assume that (II) holds for shadows with fewer than $n$ vertices.

Let $S$ be a shadow in which not every vertex of $S$ is a cut-vertex. It follows from the observation above that $S$ has a straight-ahead cycle with more than one vertex. Let $C$ be such a straight-ahead cycle, and let $r$ be its root. Let $W$ be the other straight-ahead closed walk that starts and ends at $r$. Note that $S=C\cup W$. 

The walk $W$ can then be naturally written as the concatenation of three walks $W_1,W_2$, and $W_3$, as follows. Since $C$ has at least one vertex other than $r$, and $W$ is a closed walk and $S=C\cup W$, then $W$ and $C$ must have at least two vertices in common, other than $r$. 

Let $u$ be the first vertex of $C$ (other than $r$) that we find as we traverse $W$. After reaching $u$, we continue traversing $W$; let $v$ be the first vertex in $C$ that we find in this continuation of the traversal of $W$. Then let $W_1$ be the part of $W$ from $r$ to $u$, let $W_2$ be the part of $W$ from $u$ to $v$, and let $W_3$ be the part from $v$ to $r$. Note that $W_1$ is contained in a component $\Delta_1$ of $\real^2\setminus C$, and $W_2$ is contained in the other component $\Delta_2$. The end edges of $W_3$ are contained in $\Delta_1$, but some parts of $W_3$ may be in $\Delta_1$.

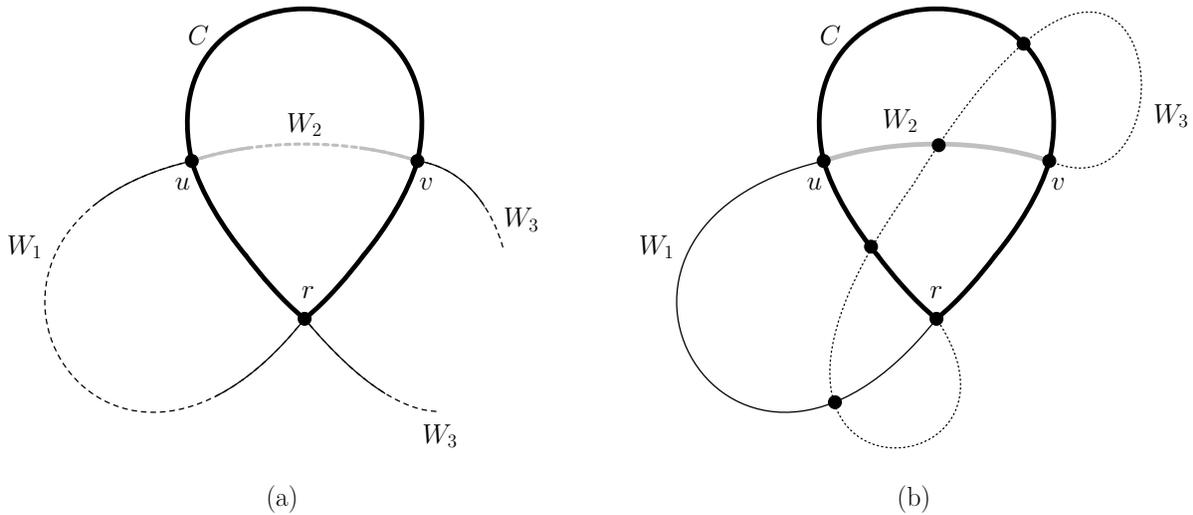
\begin{figure}[ht!]
\vglue -2.5 cm\scalebox{0.6}{\begin{tikzpicture}[thick, line cap=round, line join=round]
\hglue -2 cm
\begin{scope}[shift={(-7,0)}]
\draw[line width=3] (0,-2) .. controls (-0.5,-1.6) and (-1,-1) ..(-1.3,-0.6);
\draw[line width=3] (-1.3,-0.6) .. controls (-1.8,0) and (-2.3,0.8) ..(-2.5,1.5);
\draw[line width=3] (0,-2) .. controls (0.5,-1.6) and (1,-1) ..(1.3,-0.6);
\draw[line width=3] (1.3,-0.6) .. controls (1.8,0) and (2.3,0.8) ..(2.5,1.5);
\draw[line width=3] (2.5,1.5) .. controls (3.5,6) and (-3.5,6) .. (-2.5,1.5);

\draw(-2.5,1.5) .. controls (-9,0) and (-4.5,-7.7) .. (0,-2);
\filldraw[white] (-6.67,-4) rectangle (-4,0.5);
\filldraw[white] (-6.67,-4.5) rectangle (-2,-3);
\draw[dashed](-2.5,1.5) .. controls (-9,0) and (-4.5,-7.7) .. (0,-2);
\draw[line width=2,lightgray] (-2.5,1.5) .. controls (-1,2)  and (1,2) .. (2.5,1.5);
\filldraw[white] (-1.2,3) rectangle (1.2,1.5);
\draw[line width=2,lightgray,dashed] (-2.5,1.5) .. controls (-1,2)  and (1,2) .. (2.5,1.5);
\draw[] (2.5,1.5) .. controls  (6,1) and (5,-8)  ..   (0,-2);
\filldraw[white] (4,2) rectangle (5,-6);
\filldraw[white] (1.8,-2) rectangle (5,-6);
\draw[dashed] (2.5,1.5) .. controls  (6,1) and (5,-8)  ..   (0,-2);
\filldraw[white] (3,-0.5) rectangle (5,-4.2);
%
\filldraw[] (2.5,1.5) circle (4pt);
\filldraw[] (-2.5,1.5) circle (4pt);
\filldraw[] (0,-2) circle (4pt);
\draw  (0.08,-1.4) node {\LARGE$r$};
\draw  (0,2.3) node {\LARGE$W_2$};
\draw  (4.8,0.2) node {\LARGE$W_3$};
\draw  (3.,-4.6) node {\LARGE$W_3$};
\draw  (-6.2,-0.4) node {\LARGE$W_1$};
\draw  (2.7,1.0) node {\LARGE$v$};
\draw  (-2.7,1.0) node {\LARGE$u$};
\draw  (-2.35,4.3) node {\LARGE$C$};
\draw  (-0.5,-6) node {\LARGE(a)};
\end{scope}
\begin{scope}[shift={(7,0)}]
\draw[line width=3] (0,-2) .. controls (-0.5,-1.6) and (-1,-1) ..(-1.3,-0.6);
\draw[line width=3] (-1.3,-0.6) .. controls (-1.8,0) and (-2.3,0.8) ..(-2.5,1.5);
\draw[line width=3] (0,-2) .. controls (0.5,-1.6) and (1,-1) ..(1.3,-0.6);
\draw[line width=3] (1.3,-0.6) .. controls (1.8,0) and (2.3,0.8) ..(2.5,1.5);
\draw[line width=3] (2.5,1.5) .. controls (3.5,6) and (-3.5,6) .. (-2.5,1.5);
\draw[line width=3,lightgray] (-2.5,1.5) .. controls (-1,2)  and (1,2) .. (2.5,1.5);
\draw(-2.5,1.5) .. controls (-9,0) and (-4.5,-7.7) .. (0,-2);
\draw[dotted] (2.5,1.5) .. controls  (6,0) and (5,10)  ..  (-0.50,1) .. controls (-6,-6.5) and (2.7,-6) .. (0,-2);
\filldraw[] (0,-2) circle (4pt);
\filldraw[] (0.05,1.85) circle (4pt);
\filldraw[] (2.5,1.5) circle (4pt);
\filldraw[] (-2.5,1.5) circle (4pt);
\filldraw[] (1.93,4.1) circle (4pt);
 \filldraw[] (-1.45,-0.4) circle (4pt);
 \filldraw[] (-2.25,-3.85) circle (4pt);
\draw  (0,-1.4) node {\LARGE$r$};
\draw  (-0.8,2.4) node {\LARGE$W_2$};
\draw  (5.2,2.5) node {\LARGE$W_3$};
\draw  (2.7,1.0) node {\LARGE$v$};
\draw  (-2.7,1.0) node {\LARGE$u$};
\draw  (-6.2,-0.4) node {\LARGE$W_1$};
\draw  (-2.35,4.3) node {\LARGE$C$};
\draw  (-0.5,-6) node {\LARGE(b)};

\end{scope}
\end{tikzpicture}}
\caption{If $S$ is a shadow with a straight-ahead cycle $C$ that has more than one vertex, then $S$ must have roughly the structure shown in (a). That is, the complement of $C$ in $S$ can be naturally decomposed into three parts $W_1,W_2$ and $W_3$, where $W_1$ and $W_2$ are contained in distinct components of $\real^2\setminus C$, and $W_3$ need not be contained in any of these components, but its end edges are contained in the same component as $W_1$. In part (b) of this figure we depict a whole shadow that satisfies these properties.}
\label{fig:concat}
\end{figure}

The situation is illustrated in Figure~\ref{fig:concat}(a). The dashed parts of $W_1$ and $W_2$ hint to the fact that $W_1$ and $W_2$ may not be paths. The dashed parts of $W_3$ suggest that the beginning and the end of $W_3$ are in $\Delta_1$, but other parts of $W_3$ may be in $\Delta_2$.

We now show that if one of $W_1,W_2,W_3$ is not a path, then we are done by the induction hypothesis. For suppose that $W_1$ is not a path. Since $W_1$ is a straight ahead-walk, it follows that there must exist a straight-ahead cycle $F$ contained in $W_1$. We note that none of  $r,u$, and $v$ can be in $F$. Now let $T:=S\cy F$. The shadow $T$ has then a straight-ahead cycle (namely the one naturally induced by $C$) with more than one vertex, and so by the induction hypothesis $T$ has a trefoil diagram. But this trefoil diagram can be extended to an equivalent diagram of $S$, by having the strand corresponding to $F$ to be an overstrand. Therefore we conclude that $S$ has a trefoil diagram.

In exactly the same way it is shown that if one of $W_2$ and $W_3$ is not a path, then we are done by the induction hypothesis. We can then assume that $W_1,W_2$, and $W_3$ are paths. A possible scenario is illustrated in Figure~\ref{fig:concat}(b). We finish the proof by showing that $S$ has a trefoil diagram.

Consider a diagram $D$ of $S$ in which the strand corresponding to $W_3$ is an overstrand. An example is illustrated in Figure~\ref{fig:tref2}(b), which is a diagram of the shadow in Figure~\ref{fig:concat}.

With a macro move of Type B we can replace this strand with a strand that is contained in $\Delta_2$, and has no internal crossings. Let $D'$ be the diagram thus obtained. (See Figure~\ref{fig:tref2}(b)). The shadow of $D'$ is then isomorphic to the usual shadow of the trefoil knot, and so it follows that by giving suitable prescriptions at $r,u$, and $v$ (as in Figure~\ref{fig:tref2}(b)), we obtain that $D'$ is a diagram of the trefoil knot. By giving these prescriptions at $r,u$, and $v$ also in $D$, it follows that $D$ and $D'$ are equivalent, and so $D$ is a trefoil diagram of $S$. \hfill{\qed}

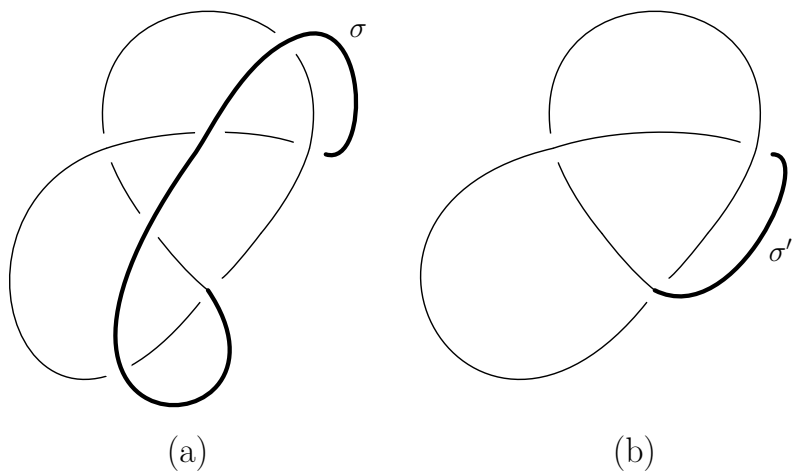
\begin{figure}[ht!]
\centering
\hglue - 2cm\scalebox{0.54}{\begin{tikzpicture}[line width= 1, line cap=round, line join=round]

\begin{scope}[shift={(0,0)}]
\draw(0,-2) .. controls (-0.5,-1.6) and (-1,-1) ..(-1.3,-0.6);
\draw(-1.3,-0.6) .. controls (-1.8,0) and (-2.3,0.8) ..(-2.5,1.5);
\draw(0.35,-1.7) .. controls (0.5,-1.6) and (1,-1) ..(1.3,-0.6);
\draw (1.3,-0.6) .. controls (1.8,0) and (2.3,0.8) ..(2.5,1.5);
\draw (2.5,1.5) .. controls (3.5,6) and (-3.5,6) .. (-2.5,1.5);
\filldraw[white] (-2.5,1.5) circle (10pt);
\draw (-2.5,1.5) .. controls (-1,2)  and (1,2) .. (2.1,1.65);
\draw(-2.5,1.5) .. controls (-7,0) and (-4.5,-7.7) .. (-0.2,-2.3);
\filldraw[white] (1.93,4.1) circle (10pt);
 \filldraw[white] (-1.45,-0.4) circle (10pt);
 \filldraw[white] (-2.25,-3.85) circle (10pt);
\filldraw[white] (0.05,1.85) circle (10pt);
\draw[line width= 3] (2.9,1.35) .. controls  (4,1) and (4,5.1) ..( 2.1,4.2) .. controls (0.9,3.7) and (0.1,2)  .. (-0.30,1.4) .. controls (-6,-6.5) and (2.7,-6) .. (0,-2);
\draw  (-0.5,-6) node {\Huge(a)};
\draw  (3.7,4.4) node {\huge$\sigma$};

\end{scope}
\begin{scope}[shift={(11,0)}]
\draw(0,-2) .. controls (-0.5,-1.6) and (-1,-1) ..(-1.3,-0.6);
\draw (-1.3,-0.6) .. controls (-1.8,0) and (-2.3,0.8) ..(-2.5,1.5);
\draw (0.35,-1.7) .. controls (0.5,-1.6) and (1,-1) ..(1.3,-0.6);
\draw (1.3,-0.6) .. controls (1.8,0) and (2.3,0.8) ..(2.5,1.5);
\draw (2.5,1.5) .. controls (3.5,6) and (-3.5,6) .. (-2.5,1.5);
\filldraw[white] (-2.5,1.5) circle (10pt);
\draw(-2.5,1.5) .. controls (-1,2)  and (1,2) .. (2.1,1.65);
\draw(-2.5,1.5) .. controls (-9,0) and (-4.5,-7.7) .. (-0.2,-2.3);
\draw[line width=3] (2.9,1.35) .. controls  (4,1.35) and (2,-3)  .. (0,-2);
\draw  (-0.5,-6) node {\Huge(b)};
\draw  (3.1,-1.0) node {\huge$\sigma'$};

\end{scope}
\end{tikzpicture}}
\caption{On the left hand side we have a diagram of a shadow in which $W_1,W_2$, and $W_3$ are paths (this is actually a diagram of the shadow in Figure~\ref{fig:concat}(b)), and the strand $\sigma$ corresponding to $W_3$ is an overstrand. We can use a macro move of Type B to shift $\sigma$ to a strand $\sigma'$ with no internal crossings, to obtain the equivalent diagram shown in (b). This last diagram is clearly a trefoil diagram, and so the diagram in (a) is a trefoil diagram of the shadow in Figure~\ref{fig:concat}(b).}
\label{fig:tref2}
\end{figure}

%
\subsection[Shadows that do not yield any knot with even crossing number: \\ proof of Observation~\ref{obs:figure8}]{Proof of Observation~\ref{obs:figure8}}\label{sec:f8}
%
For each integer $n \ge 3$, let $C_n$ be the shadow obtained by taking a cycle with $n$ vertices and adding a parallel edge to each edge. In Figure~\ref{fig:circulant} we illustrate $C_7$.

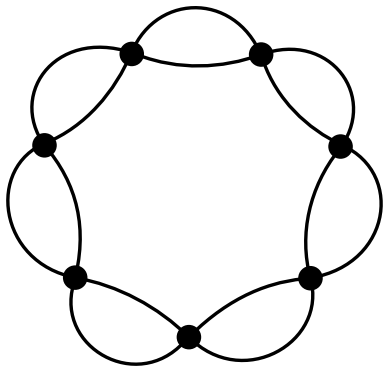
\begin{figure}[ht!]
\scalebox{0.8}{\begin{tikzpicture}[line width=1.6, line cap=round, line join=round]
\begin{scope}
    \draw(4.33, 1.88) .. controls (4.04, 0.84) and (5.38, 0.08) .. (6.16, 0.92);
    \draw (6.16, 0.92) .. controls (6.81, 1.60) and (7.46, 1.91) .. (8.15, 1.98);
    \draw (8.15, 1.98) .. controls (9.59, 2.12) and (9.86, 3.72) .. (8.80, 4.20);
    \draw (8.80, 4.20) .. controls (8.20, 4.48) and (7.71, 4.97) .. (7.49, 5.59);
    \draw (5.30, 5.75) .. controls (5.02, 5.09) and (4.53, 4.51) .. (3.87, 4.24);
    \draw(3.87, 4.24) .. controls (2.88, 3.84) and (3.07, 2.25) .. (4.36, 2.00);
    \draw (4.36, 2.00) .. controls (5.08, 1.87) and (5.73, 1.52) .. (6.25, 1.01);
    \draw (6.25, 1.01) .. controls (7.14, 0.11) and (8.56, 0.92) .. (8.27, 1.99);
    \draw (8.27, 1.99) .. controls (8.07, 2.73) and (8.26, 3.51) .. (8.73, 4.11);
    \draw (8.73, 4.11) .. controls (9.43, 5.00) and (8.59, 6.12) .. (7.45, 5.71);
    \draw (5.41, 5.70) .. controls (4.09, 6.22) and (3.22, 5.09) .. (3.85, 4.2);
    \draw (3.85, 4.2) .. controls (4.39, 3.57) and (4.59, 2.82) .. (4.33, 1.88);
 \draw [](7.49, 5.59) .. controls (7.10, 6.75) and (5.73, 6.77) .. (5.30, 5.75);
\draw [](7.45, 5.71) .. controls (6.79, 5.47) and (6.07, 5.45) .. (5.41, 5.70);
\filldraw [black] (3.85,4.2) circle [radius=5pt];
\filldraw [black] (4.36,2) circle [radius=5pt];
\filldraw [black] (6.25,1.01) circle [radius=5pt];
\filldraw [black] (8.27,1.99) circle [radius=5pt];
\filldraw [black] (8.77,4.18) circle [radius=5pt];
\filldraw [black] (7.45,5.71) circle [radius=5pt];
\filldraw [black] (5.3,5.72) circle [radius=5pt];
  \end{scope}
\end{tikzpicture}}
\caption{The shadow $C_7$. No diagram associated to $C_7$ is equivalent to a diagram of a knot with even crossing number (and in particular to the figure-eight knot). }
\label{fig:circulant}
\end{figure}

To prove Observation~\ref{obs:figure8} it suffices to show that for every odd integer $n$, if $D$ is a diagram of $C_n$, then the knot corresponding to $D$ is either trivial, or it has odd crossing number.

We prove this by induction on $n$. For the base case $n=3$ it is readily seen that every diagram of $C_3$ is either a trefoil diagram or an unknot diagram. Suppose then that $n\ge 5$, and that the statement holds for all odd integers smaller than $n$. 

Let $D$ be a diagram of $C_n$. Since $C_n$ is a $2$-connected graph, it follows that $D$ is a reduced diagram. If $D$ is alternating then, since $D$ is reduced, the crossing number of the knot corresponding to $D$ is $n$~\cites{kauf1,muras1,this1}. Suppose finally that $D$ is not alternating. Then there are two neighbour crossings on which we can perform a Reidemeister move of Type II on $D$, obtaining a diagram of $C_{n-2}$, and the result follows by the induction hypothesis. \hfill\qedsymbol

\section{Concluding remarks and open questions}\label{sec:concludingremarks}

Before starting this project, we considered the following question. Given a shadow $S$ on $n$ vertices, how many different knot types are there among the $2^n$ distinct diagrams of $S$? This question was posed by the third author in {\tt mathoverflow.net}~\cite{myover}. Among the replies received, Andy Putnam mentioned shadows that have only unknot diagrams. After obtaining the easy characterization of which shadows have this property (the first part of Theorem~\ref{thm:trefoil}), we turned our attention to the main topic in this paper.

We find the question in the previous paragraph quite interesting. We leave aside those shadows that only have unknot diagrams, as they admit an easy characterization. The graphs used in the proof of Observation~\ref{obs:figure8} show that for each odd integer $n$, there is a $2$-connected shadow $C_n$ on $n$ vertices, such that among the $2^n$ distinct diagrams of $C_n$ there are only $\lceil{n/2}\rceil$ distinct knot types. Now, even though very large, these shadows have very small depth~\cite{erickson}.

We adapt the notion of depth of a simple curve, to the parallel context of the depth of a shadow. The {\em depth} of a face $f$ of a shadow $S$ is the length of a minimum distance path, in the dual graph of $S$, from the dual vertex of $f$ to the dual vertex of the unbounded face. The {\em depth} of $S$ is then the maximum depth over all faces of $S$.

Is it true that one can guarantee many more knot types for $2$-connected shadows with large depth? We note that the $2$-connectedness requirement is important, as the shadow on top of Figure~\ref{fig:chorizo} can be generalized to obtain a shadow with arbitrarily large depth, and that only has unknot diagrams. Perhaps some of the tools developed in~\cite{erickson} can shed light on this question. 

It seems natural to ask if one could hope to prove a much better bound than the one given by Theorem~\ref{thm:maintheorem}. To explore this question, we let $\umin(n)$ denote the best possible bound. That is, for each positive integer $n$,
\[
\umin(n):=\min_{|V(S)|=n} |\uu(S)|,
\]
where the minimum is taken over all shadows with $n$ vertices.

Under this terminology, Theorem~\ref{thm:maintheorem} gives the superpolynomial bound $\umin(n)\ge 2^{\sqrt[3]{n}}$. Leaving aside relatively marginal possible improvements, such as showing that $\umin(n)\ge 2^{\sqrt{n}}$, the important question seems to be the following.

\begin{question}\label{que:subexpo}
Is $\umin(n)$ an exponential, or a subexponential function?
\end{question}

It follows from the work in~\cites{chapman1,chapman2} that there is a constant $d$, $1<d<2$, such that $\umin(n) < d^n$. To find explicit upper bounds for $d$, we can proceed as follows. Consider any fixed shadow $T$ with $m$ vertices, and let $t:=|\uu(T)|$. Now let $T^k$ denote the connected sum $T^k$ of $k$ copies of $T$. (One can easily extend to shadows the definition of the connected sum of diagrams). Since the connected sum of two diagrams is unknot if and only if each of the diagrams is unknot, it follows that $T^k$ has exactly $t^k$ unknot diagrams. Thus $T^k$ is a shadow with $n:=km$ vertices and $|\uu(T^k)|=t^k=(t^{{k}/{n}})^n=(t^{1/m})^n$. From this it follows that $\umin(n)\le t^{1/m}$.

To apply this approach we have explored a variety of shadows for which we can estimate (upper bound) their number of unknot diagrams. Using SnapPy~\cite{SnapPy} we found that there is a shadow $S$ on $16$ vertices such that $|\uu(S)| < 6416$. From the previous discussion it follows that, for every positive integer $n$ divisible by $16$, $\umin(n)\le ({6416}^{1/16})^n\approx (1.729)^n$.

Theorem~\ref{thm:trefoil} gives a structural characterization of which shadows have a trefoil diagram, and Observation~\ref{obs:figure8} shows that there are arbitrarily large $2$-connected shadows that do not have a figure-eight diagram. It seems natural to inquire what can be said about the figure-eight knot:

\begin{question}
Is there a simple characterization of which shadows have a figure-eight diagram? Is it true that every sufficiently large $3$-connected shadow contains a diagram of the figure-eight knot? 
\end{question}

\section*{Acknowledgements}

This work started while the second author visited the other authors, supported by the Laboratorio Internacional Asociado Solomon Lefschetz (LAISLA), which has now become the Unit{\'e} Mixte Internationale CNRS-CONACYT-UNAM ``Laboratoire Solomon Lefschetz''. The first and third author were supported by Conacyt grant 222667. We thank Ronald Ortner for helpful comments, and Mario Eudave for his guidance at the beginning of this project.

\begin{bibdiv}
\begin{biblist}

\bib{adams}{book}{
   author={Adams, Colin C.},
   title={The knot book. An elementary introduction to the mathematical theory of knots},
   publisher={American Mathematical Society, Providence, RI},
   date={2004},
   pages={xiv+307},
}

\bib{adamspetal}{article}{
   author={Adams, Colin},
   author={Crawford, Thomas},
   author={DeMeo, Benjamin},
   author={Landry, Michael},
   author={Lin, Alex Tong},
   author={Montee, MurphyKate},
   author={Park, Seojung},
   author={Venkatesh, Saraswathi},
   author={Yhee, Farrah},
   title={Knot projections with a single multi-crossing},
   journal={J. Knot Theory Ramifications},
   volume={24},
   date={2015},
   number={3, 1550011},
   pages={30 pp.},
}

\bib{arsuaga}{article}{
   author={Arsuaga, J.},
   author={Blackstone, T.},
   author={Diao, Y.},
   author={Karadayi, E.},
   author={Saito, M.},
   title={Linking of uniform random polygons in confined spaces},
   journal={J. Phys. A},
   volume={40},
   date={2007},
   number={9},
   pages={1925--1936},
}

\bib{buck}{article}{
   author={Buck, Gregory R.},
   title={Random knots and energy: elementary considerations},
   note={Random knotting and linking (Vancouver, BC, 1993)},
   journal={J. Knot Theory Ramifications},
   volume={3},
   date={1994},
   number={3},
   pages={355--363},
}

\bib{canta}{article}{
   author={Cantarella, Jason},
   author={Chapman, Harrison},
   author={Mastin, Matt},
   title={Knot probabilities in random diagrams},
   journal={J. Phys. A},
   volume={49},
   date={2016},
   number={40},
   pages={405001, 28pp.},
}

\bib{chapman1}{article}{
author={Chapman, Harrison},
title={Asymptotic laws for knot diagrams},
journal={Disc.~Math.~and Theoret.~Comp.~Sci. proc.~{\bf BC}},
pages={323--334},
date={2016},
note={FPSAC 2016 (Vancouver, Canada)},
}

\bib{chapman2}{article}{
   author={Chapman, Harrison},
   title={Asymptotic laws for random knot diagrams},
   journal={J. Phys. A},
   volume={50},
   date={2017},
   number={22},
   pages={225001, 32pp.},
}

\bib{SnapPy}{misc}{
     author={Culler, Marc},
     author={Dunfield, Nathan M.},
     author={Goerner, Matthias},
     author={Weeks, Jeffrey R.},
     title={Snap{P}y, a computer program for studying the geometry and topology of $3$-manifolds},
     note={Available at \url{http://snappy.computop.org} (17/02/2017)}
}


\bib{delbruck}{article}{
   author={Delbruck, M.},
   title={Knotting problems in biology},
   conference={
      title={Richard Bellman, editor. Mathematical Problems in the Biological Sciences. Proceedings of Symposia in Applied Mathematics. 14},
   },
   book={
      publisher={American Mathematical Society},
   },
   date={1962},
   pages={55--63},
}


\bib{diao3}{article}{
   author={Diao, Yuanan},
   author={Ernst, Claus},
   author={Ziegler, Uta},
   title={Generating large random knot projections},
   conference={
      title={Physical and numerical models in knot theory},
   },
   book={
      series={Ser. Knots Everything},
      volume={36},
      publisher={World Sci. Publ., Singapore},
   },
   date={2005},
   pages={473--494},
}

\bib{diao2}{article}{
   author={Diao, Y.},
   author={Nardo, J. C.},
   author={Sun, Y.},
   title={Global knotting in equilateral random polygons},
   journal={J. Knot Theory Ramifications},
   volume={10},
   date={2001},
   number={4},
   pages={597--607},
}

\bib{diao}{article}{
   author={Diao, Yuanan},
   author={Pippenger, Nicholas},
   author={Sumners, De Witt},
   title={On random knots},
   note={Random knotting and linking (Vancouver, BC, 1993)},
   journal={J. Knot Theory Ramifications},
   volume={3},
   date={1994},
   number={3},
   pages={419--429},
}

\bib{dunfield}{article}{
  author={Dunfield, N.},
  author={Hirani, A.},
  author={Obeidin, M.},
  author={Ehrenberg, A.},
  author={Bhattacharyya, S.},
  author={Lei, D.},
  title={Random Knots: A preliminary report, 2014.},
  date={2014},
  note={\url{http://www.math.uiuc.edu/~nmd/preprints/slides/random_knots.pdf}},
}

\bib{erickson}{article}{
   author={Chang, Hsien-Chih},
   author={Erickson, Jeff},
   title={Untangling planar curves},
   conference={
      title={32nd International Symposium on Computational Geometry},
   },
   book={
      series={LIPIcs. Leibniz Int. Proc. Inform.},
      volume={51},
      publisher={Schloss Dagstuhl. Leibniz-Zent. Inform., Wadern},
   },
   date={2016},
   pages={Art. 29, 16},
}

\bib{even-zohar}{article}{
   author={Even-Zohar, Chaim},
   author={Hass, Joel},
   author={Linial, Nati},
   author={Nowik, Tahl},
   title={Invariants of random knots and links},
   journal={Discrete Comput. Geom.},
   volume={56},
   date={2016},
   number={2},
   pages={274--314},
}

\bib{frisch}{article}{
  author={Frisch, H.L.},
  author={Wasserman, E.},
  title={Chemical topology 1},
  journal={J.~Am.~Chem.~Soc.},
  volume={83(18)},
  pages={3789--3795},
  date={1961},
}

\bib{gowers}{misc}{    
    title={Are there any very hard unknots?},    
    author={Timothy Gowers (http://mathoverflow.net/users/1459/gowers)},    
    note={URL: http://mathoverflow.net/q/53471 (version: 2014-07-29)},    
    eprint={http://mathoverflow.net/q/53471},    
    organization={MathOverflow}  
}


\bib{jungreis}{article}{
   author={Jungreis, Douglas},
   title={Gaussian random polygons are globally knotted},
   journal={J. Knot Theory Ramifications},
   volume={3},
   date={1994},
   number={4},
   pages={455--464},
}

\bib{kauf1}{article}{
   author={Kauffman, Louis H.},
   title={State models and the Jones polynomial},
   journal={Topology},
   volume={26},
   date={1987},
   number={3},
   pages={395--407},
}

\bib{knotdiagrammatics}{article}{
   author={Kauffman, Louis H.},
   title={Knot diagrammatics},
   conference={
      title={Handbook of knot theory},
   },
   book={
      publisher={Elsevier B. V., Amsterdam},
   },
   date={2005},
   pages={233--318},
}

\bib{macro}{article}{
   author={Kauffman, Louis H.},
   title={The unknotting problem},
   conference={
      title={Open problems in mathematics},
   },
   book={
      publisher={Springer, [Cham]},
   },
   date={2016},
   pages={303--345},
}

\bib{mro}{article}{
   author={Mroczkowski, Maciej},
   title={Diagrammatic unknotting of knots and links in the projective
   space},
   journal={J. Knot Theory Ramifications},
   volume={12},
   date={2003},
   number={5},
   pages={637--651},
}

\bib{muras1}{article}{
   author={Murasugi, Kunio},
   title={Jones polynomials and classical conjectures in knot theory},
   journal={Topology},
   volume={26},
   date={1987},
   number={2},
   pages={187--194},
}

\bib{ozawa}{article}{
   author={Ozawa, Makoto},
   title={Ascending number of knots and links},
   journal={J. Knot Theory Ramifications},
   volume={19},
   date={2010},
   number={1},
   pages={15--25},
}

\bib{pippenger}{article}{
   author={Pippenger, Nicholas},
   title={Knots in random walks},
   journal={Discrete Appl. Math.},
   volume={25},
   date={1989},
   number={3},
   pages={273--278},
}

\bib{myover}{misc}{    
    title={How many different knot types can have the same shadow (projection)},    
    author={Gelasio Salazar (https://mathoverflow.net/users/64175/gelasio-salazar)},    
    note={URL: https://mathoverflow.net/q/250603 (version: 2017-04-13)},    
    eprint={https://mathoverflow.net/q/250603},    
    organization={MathOverflow}  
}

\bib{sumners1}{article}{
   author={Sumners, D. W.},
   author={Whittington, S. G.},
   title={Knots in self-avoiding walks},
   journal={J. Phys. A},
   volume={21},
   date={1988},
   number={7},
   pages={1689--1694},
}

\bib{this1}{article}{
   author={Thistlethwaite, Morwen B.},
   title={A spanning tree expansion of the Jones polynomial},
   journal={Topology},
   volume={26},
   date={1987},
   number={3},
   pages={297--309},
}

\bib{tut}{book}{
   author={Tutte, W. T.},
   title={Connectivity in graphs},
   series={Mathematical Expositions, No. 15},
   publisher={University of Toronto Press, Toronto, Ont.; Oxford University
   Press, London},
   date={1966},
   pages={ix+145},
}

\end{biblist}
\end{bibdiv}


\end{document}